\titleformat*{\subsection}{\large\bfseries}
\numberwithin{equation}{section}
\pgfplotsset{compat=newest}
\pgfplotsset{plot coordinates/math parser=false}
\newlength\figureheight
\newlength\figurewidth
\newtheorem{thm}{Theorem}[section]
\newtheorem{prop}[thm]{Proposition}
\newtheorem{lem}[thm]{Lemma}
\newtheorem{defn}[thm]{Definition}
\newtheorem{remark}[thm]{Remark}
\renewcommand{\Im}{{\rm{Im}}}
\renewcommand{\Re}{{\rm{Re}}}
\title{The smallest singular value of deformed random rectangular matrices}
\author[1]{Fan Yang  \thanks{E-mail: fyang75@math.wisc.edu. Partially supported by NSF Career Grant DMS-1552192.}}
\affil[1]{Department of Mathematics, University of Wisconsin-Madison}
\begin{document}
\maketitle

\begin{abstract}
We prove an estimate on the smallest singular value of a multiplicatively and additively deformed random rectangular matrix. Suppose $n\le N \le M \le dN$ for some constant $d\ge 1$. Let $X$ be an $M\times n$ random matrix with independent and identically distributed entries, which have zero mean, unit variance and arbitrarily high moments. Let $T$ be an $N\times M$ deterministic matrix with comparable singular values $c\le s_{N}(T) \le s_{1}(T) \le c^{-1}$ for some constant $c>0$. Let $A$ be an $N\times n$ deterministic matrix with $\|A\|=O(\sqrt{N})$. Then we prove that for any $\epsilon>0$, the smallest singular value of $TX-A$ is larger than $N^{-\epsilon}(\sqrt{N}-\sqrt{n-1})$ with high probability. If we assume further the entries of $X$ have subgaussian decay, then the smallest singular value of $TX-A$ is at least of the order $\sqrt{N}-\sqrt{n-1}$ with high probability, which is an essentially optimal estimate.
\end{abstract}

\section{Introduction}

\subsection{Smallest singular values of random matrices}


Consider an $N\times n$ real or complex matrix $A$. The singular values $s_i(A)$ of $A$ are the eigenvalues of $(A^* A)^{1/2}$ arranged in the non-increasing order:
$$s_1(A) \ge s_2(A) \ge \ldots \ge s_n(A).$$
Of particular importance are the largest singular value $s_1(A)$, which gives the spectral norm $\|A\|$, and the smallest singular value $s_n(A)$, which measures the invertibility of $A^* A$ in the $N\ge n$ case. 

A natural random matrix model is given by a rectangular matrix $X$ whose entries are independent random variables with mean zero, unit variance and certain moment assumptions. In this paper, we focus on random variables with {\it{arbitrarily high moments}} (see (\ref{assm2})), which include all the {\it{subgaussian}} and {\it{subexponential}} random variables. 
The asymptotic behavior of the extreme singular values of $X$ has been well-studied. Suppose $X$ has dimensions $N\times n$. Let $\lambda_1\ge \lambda_2 \ge \ldots \ge \lambda_n$ be the eigenvalues of $N^{-1}X^* X$ and define the empirical spectral distribution as $\mu_N := n^{-1}\sum_{i=1}^n \delta_{\lambda_i}$. If $n/N\to \lambda \in (0,1)$ as $N\to \infty$, then $\mu_N$ converges weakly to the famous March{\v e}nko-Pastur (MP) law \cite{MP}. Moreover, the MP distribution has a density with positive support on $[(1-\sqrt{\lambda})^2, (1+\sqrt{\lambda})^2]$, which suggests that asymptotically,
\begin{equation}\label{asymp}
s_1(X) \to \sqrt{N}(1+\sqrt{\lambda})= \sqrt{N} + \sqrt{n}, \ \text{ and }\ s_n(X) \to \sqrt{N}(1-\sqrt{\lambda})= \sqrt{N} - \sqrt{n}.
\end{equation} 
The almost sure convergence of the largest singular value was proved in \cite{Geman_large} for random matrices whose entries have arbitrarily high moments. The almost sure convergence of the smallest singular value was proved in \cite{Silverstein_small} for Gaussian random matrices (i.e. the Wishart matrix). These results were later generalized to random matrices with $i.i.d.$ entries with finite fourth moment in \cite{BaiYin_large} and \cite{BaiYin_law}.

A considerably harder problem is to establish non-asymptotic versions of (\ref{asymp}), which would hold for any fixed dimensions $N$ and $n$. Most often needed are upper bounds for the largest singular value $s_1(X)$ and lower bounds for the smallest singular value $s_n(X)$. With a standard $\epsilon$-net argument, it is not hard to prove that $\|X\|$ is at most of the optimal order $\sqrt{N}$ for all dimensions, see e.g. \cite{Handbook_DS,Random_polytopes,RudVersh_rect}. On the other hand, the smallest singular value is much harder to bound below. There has been much progress in this direction during the last decade. 

{\bf Tall matrices.} It was proved in \cite{Rud_polytope} that for arbitrary aspect ratios $\lambda < 1 - c/\log N$ and for random matrices with independent subgaussian entries, one has 
\begin{equation}\label{optimal_tall}
\mathbb P\left(s_n(X) \le c_\lambda \sqrt{N}\right) \le e^{-cN},
\end{equation}
where $c_\lambda>0$ depends only on $\lambda$ and the maximal subgaussian moment of the entries. 

{\bf Square matrices.} For square random matrices with $N=n$, a lower bound for the smallest singular value was first obtained in \cite{Rud_Annal}, where it was proved that for subgaussian random matrix $X$, $s_N(X)\ge \epsilon N^{-3/2}$ with high probability. This result was later improved in \cite{RudVersh_square} to
\begin{equation}\label{optimal_square}
\mathbb P\left(s_N(X) \le \epsilon N^{-1/2}\right) \le C\epsilon + e^{-cN} ,
\end{equation} 
an essentially optimal estimate for subgaussian matrices. Subsequently, different lower bounds for $s_N(X)$ were proved under weakened moments assumptions \cite{TaoVu_circular,PanZhou_circular,gotze2010}. 

{\bf Almost square matrices.} The gap $1 - c/\log N \le \lambda < 1$ was filled in \cite{RudVersh_rect}. It was shown that for subgaussian random rectangular matrices, 
\begin{equation}\label{optimal_rect}
\mathbb P\left(s_n(X) \le \epsilon (\sqrt{N} - \sqrt{n-1})\right) \le \left( C\epsilon\right)^{N-n+1} + e^{-cN} ,
\end{equation}
for all fixed dimensions $N\ge n$. This bound is essentially optimal for subgaussian matrices with all aspect ratios. It is easy to see that (\ref{optimal_tall}) and (\ref{optimal_square}) are the special cases of the estimate (\ref{optimal_rect}). 


In this paper, we are interested in the extreme singular values of a multiplicatively and additively deformed random rectangular matrix. Given an $M\times n$ random matrix $X$ with independent entries, we consider the matrix $TX-A$, where $T$ and $A$ are $N\times M$ and $N\times n$ deterministic matrices, respectively. It is easy to bound above the largest singular value using $\|TX-A\| \le \|T\|\|X\| + \|A\|$. On the other hand, we expect that if $n\le N\le M$ and the singular values of $T$ satisfy $c\le s_N(T) \le s_1(T) \le c^{-1}$, then a similar estimate as in (\ref{optimal_rect}) would still hold for $TX-A$. In fact, if $M=N$ and $X$ is subgaussian, one can prove that the estimate (\ref{optimal_rect}) holds for the matrix $X- T^{-1}A$ with a direct generalization of the method in \cite{RudVersh_rect}. Together with $s_n(TX-A) \ge s_N(T) s_n(X-T^{-1}A)$, this already gives the desired lower bound for $s_n(TX-A)$. In this paper, we will consider more general case where $N\le M$ and $X$ is not necessarily subgaussian, see Theorem \ref{main_small}.


One of our motivations is the potential application in statistical science. Consider sample covariance matrices of the form $Q= n^{-1}B B^*$, where $B$ is an $N\times n$ matrix. The columns of $B$ represent $n$ independent observations of some random $N$-dimensional vector $\mathbf b$. For the sample vector $\mathbf b$, we take a linear model $\mathbf b =T\mathbf x$, where $T$ is a deterministic $N\times M$ matrix and $\mathbf x$ is a random $M$-dimensional vector with independent entries. Then we can write $\mathbf b = T\hat{\mathbf x} + \mathbf a$, where $\hat{\mathbf x}$ is a centered random vector and $\mathbf a =T\mathbb E\mathbf x$. In addition, without loss of generality, we may assume that the entries of $\hat{\mathbf x}$ have unit variance by absorbing the variance of $\hat x_i$ into $T$. Hence we can write $B$ into the form $B= TX - A$, and our result would provide a good a priori estimate on the smallest singular values of $B$. 

Another application of our result is the {\it{circular law}} for square random matrices. Let $X$ be an $N\times N$ random matrix with $i.i.d.$ entries with zero mean and unit variance. It is well known that the spectral measure of eigenvalues of $N^{-1/2}X$ converges to the circular law, i.e. the uniform distribution on the unit disk \cite{Ginibre,Bai1997}. An important input of the proof is the lower bound for the smallest singular value of $N^{-1/2}X - z$ for any $z\in \mathbb C$ \cite{gotze2010,PanZhou_circular,TaoVu_circular,tao2010}. In \cite{XYY}, we proved a generalized {\it{local circular law}} for square random matrices of the form $N^{-1/2}TX$. In order to obtain a lower bound for the smallest singular value of $N^{-1/2}TX - z$, we assumed that the entries of $X$ have continuous distributions. This assumption rules out some important models such as the Bernoulli random matrices. Now with the result of this paper, we can relax this assumption greatly to include all random variables with sufficiently high moments.

\subsection{Main result and the reduction to subgaussian matrices}

Let $\xi_1, \ldots , \xi_n$ be independent random variables such that for $1\le i \le n$,
\begin{equation}\label{assm1}
 \mathbb E \xi_{i}=0,\ \ \mathbb E|\xi_{i}|^2=1, 
\end{equation}
and for any $p\in\mathbb N$, there is an $N$-independent constant $\sigma_p$ such that
\begin{equation}\label{assm2}
 \mathbb E|\xi_{i}|^p\le \sigma_p.
\end{equation}
We assume that $X$ is an $M\times n$ random matrix, whose rows are independent copies of the random vector $(\xi_1,\ldots, \xi_n)$. In this paper, we consider the deformed random rectangular matrix $TX - B$, where $T$ and $B$ are $N\times M$ and $N\times n$ deterministic matrices, respectively. We assume that
\begin{equation}\label{assm0}
n \le N \le M \le \Lambda N, \ \ \|B\| \le K_0\sqrt{N}
\end{equation}
for some constants $K_0,\Lambda\ge 1$. Moreover, we assume the eigenvalues of $TT^*$ satisfy that
\begin{equation}\label{assm3}
K_0^{-1} \le \sigma_N \le \ldots \le \sigma_{2} \le \sigma_{1} \le K_0.
\end{equation}
For definiteness, in this paper we focus on the case with {\it{real}} matrices. However, our results and proof also hold, after minor changes, in the {\it{complex}} case if we assume in addition that $X_{ij}$ have independent real and imaginary parts, such that
$$ \mathbb E \left(\Re\, X_{ij}\right)=0,\ \ \mathbb E\left(\Re\, X_{ij}\right)^2=\frac{1}{2}, $$
and similarly for $\Im\, X_{ij}$. The main result of this paper is the following theorem. 

\begin{thm}\label{main_small}
Suppose the assumptions (\ref{assm1}), (\ref{assm2}), (\ref{assm0}) and (\ref{assm3}) hold. Fix any constants $\tau>0$ and $\Gamma>0$. Then for every $\epsilon \ge 0$, we have
\begin{equation}\label{main_eq_small1}
\mathbb P\left(s_n(TX-B) \le \epsilon N^{-\tau} \left(\sqrt N-\sqrt{n-1}\right)\right) \le (C\epsilon)^{N-n+1} + N^{-\Gamma}
\end{equation}
for large enough $N \ge N_0$, where the constant $C>0$ depends only on $\sigma_p$, $\Lambda$ and $K_0$, and $N_0$ depends only on $\sigma_p$, $\Lambda$, $\Gamma$ and $\tau$. 
\end{thm}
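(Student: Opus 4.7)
The plan is to reduce to the case of subgaussian entries via truncation, and then extend the Rudelson--Vershynin rectangular smallest singular value machinery \cite{RudVersh_rect} to handle the multiplicative and additive deformation by $T$ and $B$. Fix a small $\delta = \delta(\tau) > 0$ to be chosen later and define the centered truncations $\tilde X_{ij} := X_{ij}\one_{\{|X_{ij}|\le N^\delta\}} - m_{ij}$ with $m_{ij} := \mathbb E[X_{ij}\one_{\{|X_{ij}|\le N^\delta\}}]$. Choosing the moment exponent $p$ in (\ref{assm2}) large (as a function of $\Gamma$ and $\delta$), Markov and a union bound give that, outside an event of probability at most $N^{-\Gamma}$, every entry satisfies $|X_{ij}|\le N^\delta$, so $X = \tilde X + m$ as matrices. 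Since $|m_{ij}|\le \sigma_p N^{-\delta(p-1)}$, the norm $\|T m\|$ is negligible compared to $\sqrt N$, and hence $TX - B = T\tilde X - B'$ with $\|B'\|\le 2K_0\sqrt N$. After a harmless diagonal rescaling to restore exactly unit variance, it suffices to bound from below $s_n(T\tilde X - B')$, where $\tilde X$ has mean-zero, unit-variance entries bounded by $O(N^\delta)$ and is in particular subgaussian with subgaussian norm $O(N^\delta)$.

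\textbf{Extending the Rudelson--Vershynin framework to rectangular $T$.} Following \cite{RudVersh_rect}, decompose $S^{n-1} = \mathrm{Comp}\cup \mathrm{Incomp}$ into compressible vectors (close to sparse) and incompressible ones. For the compressible regime, use an $\varepsilon$-net: for fixed $v$, $(T\tilde X - B')v$ is a shifted sum of $M$ independent centered random vectors with joint covariance $TT^*$, and (\ref{assm3}) gives $\|T^* u\|^2 = u^* T T^* u \ge K_0^{-1}$ for every unit $u\in\mathbb R^N$. Consequently, decomposing $\mathbb R^M$ into the range of $T^*$ and its $(M-N)$-dimensional orthogonal complement and using a tensorized small-ball estimate in the former shows $\mathbb P(\|(T\tilde X - B')v\|\le c\sqrt N) \le (CN^\delta)^N e^{-cN}$ pointwise; combined with $\|\tilde X\|\le C\sqrt M$ and the small metric entropy of $\mathrm{Comp}$, the net argument yields $\|(T\tilde X - B')v\|\ge c\sqrt N$ uniformly on $\mathrm{Comp}$ outside an event of probability $e^{-cN^{1-\delta b}}$. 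For the incompressible regime, invoke the distance lemma
\[
\inf_{v\in\mathrm{Incomp}}\|(T\tilde X - B')v\| \gtrsim \frac{1}{\sqrt n}\min_{k\le n}\mathrm{dist}\bigl(TZ_k - B'e_k,\ H_k\bigr),
\]
where $Z_k$ is the $k$-th column of $\tilde X$ and $H_k$ is the span of the other columns of $T\tilde X - B'$. The distance is bounded below by $|\langle T^* u_k, Z_k\rangle - \langle u_k, B' e_k\rangle|$ for any unit normal $u_k$ to $H_k$; this is a shifted linear combination of $M$ independent subgaussian scalars with coefficient vector $T^* u_k$ of norm at least $K_0^{-1/2}$. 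Applying the Rudelson--Vershynin tensorized Littlewood--Offord-type inequality to the $(N-n+1)$-dimensional projection of $TZ_k - B'e_k$ onto $H_k^\perp$ bounds the probability that this falls below $\varepsilon(\sqrt N - \sqrt{n-1})$ by $(CN^{\delta a}\varepsilon)^{N-n+1}$. Combining both regimes and choosing $\delta = \tau/a$ absorbs the $N^{\delta a}$ factor into $N^{\tau}$, yielding (\ref{main_eq_small1}).

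\textbf{Main obstacle.} The hardest step is making the incompressible analysis uniform in the rectangular deformation: the anti-concentration input must hold for the scalar $\langle T^*u, Z\rangle$ uniformly over unit $u\in\mathbb R^N$, rather than for the i.i.d. scalar $\langle u, Z\rangle$ available in the square case. Controlling the essential least common denominator (or analogous concentration function) of the transported coefficient vector $T^*u$, and pushing the corresponding structural decomposition of $\mathrm{Incomp}$ from $\mathbb R^n$ forward under $T^*$ into $\mathbb R^M$, requires exploiting (\ref{assm3}) in a quantitative way. A secondary but important bookkeeping obstacle is tracking the $N^\delta$-dependence of the subgaussian norm through both the net argument and the distance lemma so that the overall loss is only $N^{-\tau}$; this is precisely why the general high-moment statement carries a $N^{-\tau}$ factor, whereas a sharper, loss-free bound should be attainable in the genuinely subgaussian case as indicated in the abstract.
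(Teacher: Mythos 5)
Your truncation step and the decision to reuse the Rudelson--Vershynin compressible/incompressible decomposition are both consistent with what the paper does, and your bookkeeping of the subgaussian norm $O(N^\omega)$ through the estimates matches the paper's. However, there is a genuine gap in how you handle the multiplicative deformation $T$, and it sits exactly in the step you flag as the ``main obstacle.''

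The paper does not attempt to push the least common denominator analysis through the vectors $T^*u$. Instead, it eliminates $T$ entirely before any anti-concentration enters the argument, via the singular value decomposition $T = U\tilde D V$ with $\tilde D = (D,0)$. Writing $V = \binom{V_1}{V_2}$ with $V_1$ the top $N\times M$ block of the unitary $V$, one has $T = U D V_1$ with $V_1 V_1^T = \one_N$, and
\begin{equation*}
T\tilde X - B = UD\bigl[V_1(\tilde X-\mathbb E\tilde X)D_1^{-1} - \bigl(D^{-1}U^{-1}B - V_1\mathbb E\tilde X\bigr)D_1^{-1}\bigr]D_1.
\end{equation*}
Because $UD$ and $D_1$ have singular values bounded above and below by constants (using (\ref{assm3}) and (\ref{cutoff_bound})), they cannot change the order of the smallest singular value, and the problem reduces to $s_n(PY - A)$ for a \emph{partial isometry} $P = V_1$. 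This is precisely the hypothesis $PP^T = 1$ in Theorem~\ref{main_sub}, and it is used in an essential way in Section~\ref{section_distance}: the vectors $a_k := P_E e_k$, with $E := P^T H^\perp$, then satisfy the isotropy condition $\sum_k \langle x, a_k\rangle^2 = \|x\|_2^2$ for $x\in E$, which is exactly hypothesis (\ref{isotropic_x}) of Theorem~\ref{thm_smallball}. Without the SVD reduction, the coefficient vectors you would feed into the Littlewood--Offord machinery are $T^*u$ for random unit normals $u$, and neither the isotropy condition nor a clean relation between $\mathrm{LCD}_{\alpha,\gamma}$ of the subspace and of these coefficient vectors is available; you would have to prove a new structure theorem for images $T^*(H^\perp)$ from scratch. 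Your proposal correctly diagnoses this as the hard point but offers no resolution, whereas the paper's SVD reduction avoids it altogether. This is a missing idea, not merely a detail: as written, the incompressible half of your argument does not go through.

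A secondary remark: in the incompressible case you write the distance bound with a single normal $u_k$; in the rectangular regime one must work with the full $(N-n+1)$-dimensional orthogonal complement $H_{J^c}^\perp$ (as in Lemma~\ref{dist_lemm}), and the resulting uniform distance bound (Theorem~\ref{uni_dist_bound}) requires both the operator bound on $W = P_{H^\perp}PY|_{\mathbb R^J}$ (Proposition~\ref{opbound_W}) and the decoupling Lemma~\ref{lem_decouple} to get the exponent $N-n+1$ rather than a weaker $e^{-cd}$ term. These details are orthogonal to your main gap, but they do require $P$ to be a partial isometry as well.
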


To prove this theorem, we first truncate the entries of $X$ at level $N^\omega$ for some small $\omega>0$. Combining condition (\ref{assm2}) with Markov's inequality, we get that for any (small) $\omega>0$ and (large) $\Gamma>0$, there exists $N(\omega, \Gamma)$ such that 
$$\mathbb P\left( |\xi_i| > N^{\omega}/2\right) \le N^{-\Gamma-2}$$
for all $N\ge N(\omega,\Gamma)$. Hence with a loss of probability $O(N^{-\Gamma})$, it suffices to control the smallest singular values of the random matrix $T\tilde X - B$, where
$$\tilde X :=1_{\Omega} X, \ \ \Omega :=\left\{|X_{ij}|\le N^{\omega}/2 \text{ for all } 1\le i \le M, 1\le j \le n\right\}.$$
By (\ref{assm2}) and integration by parts, we can check that for $1\le i \le n$,
\begin{equation}\label{cutoff_bound}
\mathbb E\left( \xi_i {\mathbf 1}_{\{|\xi_i| \le N^{\omega}/2\}} \right)= O\left(N^{-\Gamma-2+\omega}\right), \ \ \text{Var}\left(\xi_i {\mathbf 1}_{\{|\xi_i| \le N^{\omega}/2\}}\right) = 1 + O\left(N^{-\Gamma-2+2\omega}\right).
\end{equation}
We define $D_1$ to be an $n\times n$ diagonal matrix with $(D_1)_{ii} = \text{Var}\left(\xi_i {\mathbf 1}_{\{|\xi_i| \le N^{\omega}/2\}}\right)^{1/2}$. 

Let $T=U\tilde DV$ be a singular value decomposition of $T$, where $U$ is an $N\times N$ unitary matrix, $V$ is an $M\times M$ unitary matrix and $\tilde D=(D,0)$ is an $N\times M$ rectangular diagonal matrix such that $D=\text{diag}(d_1,d_2,\ldots,d_N)$ with $d_i^2=\sigma_i$. 
We denote $V = \left( {\begin{array}{*{20}c}
   { V_1 }  \\
   {V_2}  \\
\end{array}} \right),$ where $V_1$ has size $N\times M$ and $V_2$ has size $(M-N)\times M$. Then we have
\begin{equation*}
T\tilde X - B = UDV_1(\tilde X - \mathbb E\tilde X) - (B - T\mathbb E\tilde X) = UD\left[V_1 (\tilde X - \mathbb E\tilde X)D_1^{-1}  - \left(D^{-1}U^{-1}B - V_1 \mathbb E\tilde X\right)D_1^{-1} \right]D_1. 
\end{equation*}
Due to (\ref{assm3}) and (\ref{cutoff_bound}), we only need to bound $s_n(V_1 Y - A)$, where 
$$Y:=(\tilde X - \mathbb E\tilde X)D_1^{-1}, \ \ \text{and} \ \ A:=(D^{-1}U^{-1}B - V_1 \mathbb E\tilde X)D_1^{-1}.$$ 
Using (\ref{assm0}), (\ref{assm3}), (\ref{cutoff_bound}) and the definition of $\Omega$, it is easy to check that $A$ is a deterministic matrix with
\begin{equation}\label{prop_Y2}
\|A\|\le C\left(\|B\| + \|\mathbb E\tilde X\|\right) \le C\left(\sqrt{N}+ N^{-\Gamma-1+\omega}\right) \le C\sqrt{N},
\end{equation}
and $Y$ is a random matrix with independent entries satisfying
\begin{equation}\label{prop_Y1}
\mathbb E(Y_{ij}) = 0, \ \ \text{Var}(Y_{ij}) = 1, \ \ |Y_{ij}| \le N^\omega.
\end{equation}
Recall that a random variable $\xi$ is called subgaussian if there exists $K>0$ such that
\begin{equation}\label{assm_sub}
\mathbb P\left( |\xi| > t \right)\le 2 \exp(-t^2/K^2) \ \ \text{for all }t>0.
\end{equation}
The infimum of such $K$ is called the subgaussian moment of $\xi$ or the $\psi_2$-norm $\|\xi\|_{\psi_2}$. By (\ref{prop_Y1}), it is obvious that $Y_{ij}$ are subgaussian random variables with $\|Y_{ij}\|_{\psi_2}\le N^{\omega}$.
Moreover, by Theorem 2.10 of \cite{isotropic}, there exists a constant $C>0$ such that
$$\mathbb P \left( \|X\| \le C\sqrt{N} \right) \ge 1- N^{-\Gamma}$$
for large enough $N$. Then using $\|\tilde X\|\le \|X\|$, we get that
\begin{equation}\label{prop_Y3}
\mathbb P(\| Y \| \le C\sqrt{N}) \ge 1 - N^{-\Gamma}.
\end{equation}
From the above discussion, we see that Theorem \ref{main_small} follows from the following theorem.

\begin{thm}\label{main_sub}
Let $\xi_1, \ldots , \xi_n$ be independent centered random variables with unit variance, finite fourth moments and subgaussian moments bounded by $K$ for some $K\equiv K(N) \le N^{\omega}$. Let $Y$ be an $M\times n$ random matrix, whose rows are independent copies of the random vector $(\xi_1,\ldots,\xi_n)$. Let $P$ be an $N\times M$ deterministic matrix with $PP^T = 1$, and let $A$ be an $N\times n$ deterministic matrix.  Suppose that $\|Y\| + \|A\| \le C_1 \sqrt{N}$ for some constant $C_1>0$. Then for every $0< \omega < \omega_0$ and every $\epsilon \ge 0$, we have
\begin{equation}\label{main_eq_small}
\mathbb P\left(s_n(PY-A) \le \epsilon \left(\sqrt N-\sqrt{n-1}\right)\right) \le \left(CK^{L} \epsilon \right)^{N-n+1} + e^{-cN/K^{4}},
\end{equation}
where the constants $\omega_0, c, C, L>0$ depend only on $\Lambda$, $C_1$ and the maximal fourth moment.
\end{thm}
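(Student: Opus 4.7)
The plan is to follow the rectangular Rudelson--Vershynin framework of \cite{RudVersh_rect}, adapted to the deformed matrix $PY-A$. Starting from $s_n(PY-A)=\inf_{v\in S^{n-1}}\|(PY-A)v\|$, I would decompose $S^{n-1}$ into compressible vectors $\mathrm{Comp}(\delta,\rho)$ (those $\rho$-close to some $\delta n$-sparse unit vector) and incompressible vectors $\mathrm{Incomp}(\delta,\rho)$, with $\delta,\rho>0$ tuned later. The additive shift $A$ should cause no serious trouble because $\|A\|\le C_1\sqrt{N}$ only alters constants in all the bounds, and can be absorbed throughout.

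For compressible vectors, fix a unit $v\in\mathbb R^n$; then $Yv\in\mathbb R^M$ has independent, mean-zero, unit-variance subgaussian coordinates, so $PYv-Av$ has covariance $PP^T=I_N$. A standard Hanson--Wright concentration bound yields $\mathbb P(\|(PY-A)v\|\le c\sqrt N)\le 2e^{-cN/K^4}$ for some $c>0$. Combined with the a priori bound $\|PY-A\|\le C_1\sqrt N$ (for net approximation) and the classical metric entropy estimate $\log|\mathrm{Net}_\rho(\mathrm{Comp}(\delta,\rho))|\le C\delta N\log(e/\delta\rho)$, a union bound yields $\mathbb P\bigl(\inf_{v\in\mathrm{Comp}(\delta,\rho)}\|(PY-A)v\|\le c'\sqrt N\bigr)\le e^{-cN/K^4}$ for $\delta,\rho$ chosen small depending only on $C_1$.

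For incompressible vectors, the invertibility-via-distance lemma (Lemma 3.5 of \cite{RudVersh_rect}) gives $\inf_{v\in\mathrm{Incomp}(\delta,\rho)}\|(PY-A)v\|\ge\tfrac{\rho}{\sqrt n}\min_{1\le k\le n}\mathrm{dist}(Z_k,H_k)$, where $Z_k$ is the $k$-th column of $PY-A$ and $H_k=\mathrm{span}\{Z_j:j\ne k\}$. Fix $k$, and let $\tilde P_k$ be an $(N-n+1)\times N$ matrix whose rows form an orthonormal basis of some $(N-n+1)$-dimensional subspace of $H_k^\perp$; then $\mathrm{dist}(Z_k,H_k)\ge\|\tilde P_k Z_k\|$, and since $\tilde P_k$ depends only on $\{Y_{\cdot,j}\}_{j\ne k}$, it is independent of $Y_{\cdot,k}$. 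The key algebraic observation is that $Q:=\tilde P_k P$ is itself a partial isometry: $QQ^T=\tilde P_k(PP^T)\tilde P_k^T=I_{N-n+1}$. Conditioning on $\tilde P_k$, the problem reduces to proving, for any deterministic $Q$ with $QQ^T=I_{N-n+1}$ and any deterministic $b\in\mathbb R^{N-n+1}$, the bound
$$\mathbb P\bigl(\|QY_{\cdot,k}-b\|\le\epsilon\sqrt{N-n+1}\bigr)\le(CK^L\epsilon)^{N-n+1}+e^{-cN/K^4}.$$

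The main obstacle is this anti-concentration estimate for the affine image of a subgaussian vector through a partial isometry. I would attack it using the LCD (least common denominator) machinery of Rudelson--Vershynin applied inside the $(N-n+1)$-dimensional range of $Q$: for any unit $w\in\mathbb R^{N-n+1}$, the scalar $\langle w,QY_{\cdot,k}-b\rangle=\langle Q^T w,Y_{\cdot,k}\rangle-\langle w,b\rangle$ is a linear form with unit-norm coefficient vector $Q^T w\in\mathbb R^M$ (because $Q^T$ is an isometry on its image). Split the test sphere $S^{N-n}$ into directions of small LCD, handled by an $\epsilon$-net argument that exploits the isometry of $Q$ together with the deterministic bound $\|Q\|=1$, and directions of large LCD, handled by a Halász-type Littlewood--Offord inequality tensorized over coordinates. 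The principal difficulty is verifying that the arithmetic spread of $w$ survives pullback through $Q^T$ well enough that the LCD bound yields the sharp $\epsilon$-power; the factor $K^L$ in the final estimate absorbs the standard polynomial-in-$K$ degradation when passing from Gaussian to subgaussian entries within the LCD framework.
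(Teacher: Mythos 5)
Your single-column reduction $\inf_{x\in\mathrm{Incomp}}\|(PY-A)x\|\gtrsim\frac{\rho}{\sqrt n}\min_k\mathrm{dist}(Z_k,H_k)$ has a scaling problem that the paper's random-subset construction is specifically designed to avoid. With $d:=N-n+1$, the target threshold $\epsilon(\sqrt N-\sqrt{n-1})\asymp\epsilon d/\sqrt N$ forces you to bound $\mathbb P(\mathrm{dist}(Z_k,H_k)<\epsilon'\sqrt d)$ with $\epsilon'\asymp\epsilon\sqrt d$, and the small-ball estimate (of power $d$, since $\mathrm{codim}\,H_k\ge d$) then yields $(C\epsilon\sqrt d)^d$, not $(C\epsilon)^d$. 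This $d^{d/2}$ loss is fatal whenever $d$ grows. The paper instead takes a \emph{random} $m$-element subset $J$ with $m=\min\{d,\lfloor\rho^2\delta n/2\rfloor\}$ and bounds $\mathrm{dist}(Zx,H_{J^c})$ for $x$ in the totally-spread set $S^J$ (Lemmas \ref{lemma_probEx}--\ref{incomp_red_dist}); since $\|P_Jx\|_2\gtrsim\sqrt{m/n}$ for incompressible $x$ and a favorable $J$, the loss is only $\sqrt{d/m}=O(K^2\sqrt{\log K})$, absorbed into the $K^L$ factor. Relatedly, this reduction requires $l=m+d-1\le\beta N$ (Lemma \ref{dist_lemm}), which fails when $d$ is of order $N$; that is why the paper splits off the tall case $n\le\lambda_0 N$ and handles it by a separate CLT-type argument (Section \ref{sec_tall}) that you do not have. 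Also minor: $\delta$ must be taken $\asymp N/(nK^4\log K)$, not a constant depending only on $C_1$, or the net cardinality $(5/\epsilon)^{\lceil\delta n\rceil}$ overwhelms the $e^{-cN/K^4}$ small-ball bound in the compressible step.

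The deeper conceptual gap is your claim that, after conditioning on $\tilde P_k$, it suffices to prove $\mathbb P(\|QY_{\cdot,k}-b\|\le\epsilon\sqrt{N-n+1})\le(CK^L\epsilon)^{N-n+1}+e^{-cN/K^4}$ \emph{for any deterministic partial isometry} $Q$. That statement is false: taking $Q$ to be the coordinate projection onto the first $N-n+1$ entries and $Y_{\cdot,k}$ Bernoulli $\pm1$ with $b$ a sign vector already gives $\mathbb P(\|QY_{\cdot,k}-b\|=0)=2^{-(N-n+1)}$, which is not $\le(CK^L\epsilon)^{N-n+1}$ as $\epsilon\to0$. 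The correct statement is a \emph{two-level} probabilistic one: the LCD of the random subspace $E=Q^T\mathbb R^{N-n+1}=P^TH_{J^c}^\perp\subseteq\mathbb R^M$ is exponentially large with probability $1-e^{-cN}$ \emph{over the randomness of the other columns}, and only \emph{on that event} does the conditional small-ball bound hold. This is precisely the content of Lemma \ref{dist_lemm} and Theorem \ref{structure_thm}, whose proof (Section \ref{section_distance}) requires navigating $E$ away from compressible and small-LCD level sets using the randomness of $\tilde Y^T - \tilde B$; it is not, as you suggest, a deterministic verification that ``arithmetic spread survives pullback through $Q^T$.''
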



\begin{remark}
Suppose $X_{ij}$ are subgaussian random variables with $\max_{i,j}\|X_{ij}\|\le K$ for some constant $K>0$. Then we have
$$\mathbb P(\|X\|\ge t \sqrt{N}) \le e^{-c_0 t^2 N} \ \ \text{ for } t\ge C_0,$$
where $c_0,C_0>0$ depend only on $K$ (see \cite[Proposition 2.4]{RudVersh_rect}). Combining with Theorem \ref{main_sub}, we obtain the optimal estimate for the smallest singular value of $TX-B$:
\begin{equation} 
\mathbb P\left(s_n(TX-B) \le \epsilon \left(\sqrt N-\sqrt{n-1}\right)\right) \le (C\epsilon)^{N-n+1} + e^{-cN}.
\end{equation}
\end{remark}



The bulk of this paper is devoted to the proof of Theorem \ref{main_sub}. In the preliminary Section \ref{sec_preliminary}, we introduce some notations and tools that will be used in the proof. In Section \ref{sec_decompose}, we first reduce the problem into bounding below $\|(PY-A)x\|_2$ for {\it{compressible unit vectors}} $x \in S^{n-1}$, whose $l^2$-norm is concentrated in a small number of coordinates, and for {\it{incompressible unit vectors}} comprising the rest of the sphere $S^{n-1}$. Then we prove a lower bound for compressible unit vectors using a small ball probability result (Lemma \ref{lem_smallball}) and a standard $\epsilon$-net argument. The incompressible unit vectors are dealt with in Sections \ref{sec_tall} and \ref{sec_square}. In Section \ref{sec_tall}, we consider the case $1\le n \le \lambda_0 N$ for some constant $\lambda_0 \in (0,1)$, i.e. when $PY-A$ is a tall matrix. The proof can be finished with another small ball probability result (Lemma \ref{small_CLT}) and the $\epsilon$-net argument. The almost square case with $\lambda_0N < n \le N$ is considered in Section \ref{sec_square}. We first reduce the problem into bounding the distance between a random vector and a random subspace, and then complete the proof with a random distance lemma---Lemma \ref{dist_lemm}, whose proof will be given in Section \ref{section_distance}.

\vspace{5pt}

\noindent{\bf Acknowledgements.} I would like to thank Roman Vershynin for valuable suggestions and for pointing out several useful references. I am also grateful to my advisor Jun Yin for his financial support and helpful comments on this paper. Finally, many thanks to my friend Haokai Xi for useful discussions.

\section{Basic notations and tools}\label{sec_preliminary}

In this paper, we use $C$ to denote a generic large positive constant, which may depend on fixed parameters and whose value may change from one line to the next. Similarly, we use $c$, $\epsilon$ or $\omega$ to denote a generic small positive constant. If a constant depends on a quantity $a$, we use $C(a)$ or $C_a$ to indicate this dependence. 

The canonical inner product on $\mathbb R^n$ is denoted $\langle \cdot,\cdot\rangle$, and the Euclidean norm is denoted $\|\cdot\|_2$. The distance from a point $x$ to a set $D$ in $\mathbb R^n$ is denoted $\text{dist}(x,D)$. The unit sphere centered at the origin in $\mathbb R^n$ is denoted $S^{n-1}$. The orthogonal projection in $\mathbb R^n$ onto a subspace $E$ is denoted $P_E$. For a subset of coordinates $J \subseteq \{1,\ldots, n\}$, we often write $P_J$ for $P_{\mathbb R^J}$. The unit sphere of $E$ is denoted $S(E):=S^{n-1} \cap E$. 

For any matrix $A$, we use $A^*$ to denote its conjugate transpose, $A^T$ the transpose, $\|A\| := \|A\|_{l^2 \to l^2}$ the operator norm and $\|A\|_{HS}$ the Hilbert-Schmidt norm. We usually write an identity matrix as $1$ without causing any confusions.


The following tensorization lemma is Lemma 2.2 of \cite{RudVersh_square}

\begin{lem}[Tensorization]\label{lem_tensor}
Let $\zeta_1,\ldots,\zeta_n$ be independent non-negative random variables, and let $B,\epsilon_0\ge 0$. 

\begin{itemize}
\item[(1)]
Assume that for each $k$,
$$\mathbb P(\zeta_k <\epsilon) \le B\epsilon \ \ \text{for all }\epsilon \ge \epsilon_0.$$
Then
$$\mathbb P\left( \sum_{k=1}^n \zeta_k^2 < \epsilon^2 n\right)\le \left(CB\epsilon\right)^n \ \ \text{for all }\epsilon \ge \epsilon_0,$$
where $C$ is an absolute constant.

\item[(2)] Assume that there exist $\lambda>0$ and $\mu\in (0,1)$ such that for each $k$,
$$\mathbb P(\zeta_k <\lambda) \le \mu.$$
Then there exists $\lambda_1>0$ and $\mu_1\in (0,1)$ that depend on $\lambda$ and $\mu$ only and such that
$$\mathbb P\left( \sum_{k=1}^n \zeta_k^2 < \lambda_1 n\right)\le \mu_1^n.$$
\end{itemize}
\end{lem}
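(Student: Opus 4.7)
The plan is to use the standard Laplace transform (exponential Chernoff) trick. For any $\alpha > 0$, independence gives
\[
\mathbb P\!\left(\sum_{k=1}^n \zeta_k^2 < R^2\right) \le e^{\alpha R^2}\, \mathbb E \exp\!\left(-\alpha \sum_{k=1}^n \zeta_k^2\right) = e^{\alpha R^2}\prod_{k=1}^n \mathbb E\exp(-\alpha \zeta_k^2),
\]
so both parts reduce to bounding a single Laplace transform $\mathbb E\exp(-\alpha \zeta_k^2)$, and then choosing $\alpha$ so that $e^{\alpha R^2}$ times this bound is as small as required. For part (1) I would take $R^2 = \epsilon^2 n$ and $\alpha = t/\epsilon^2$ with $t$ an absolute constant (say $t = 1$), so the goal becomes: show that $\mathbb E\exp(-t\zeta_k^2/\epsilon^2) \le C_t B\epsilon$ for every $\epsilon \ge \epsilon_0$; the lemma then follows with $C = e^t C_t$.

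The main computation (and the main obstacle) is this Laplace estimate, which I would handle by rewriting it via integration by parts,
\[
\mathbb E\exp(-t\zeta_k^2/\epsilon^2) = \int_0^\infty \frac{2tv}{\epsilon^2}\, e^{-tv^2/\epsilon^2}\, \mathbb P(\zeta_k \le v)\, dv,
\]
and splitting the integral at $v = \epsilon_0$. On $[0,\epsilon_0]$ the hypothesis only gives the crude bound $\mathbb P(\zeta_k \le v) \le B\epsilon_0 \le B\epsilon$, but this factors out and the remaining Gaussian-like weight integrates to at most $1$, contributing $\le B\epsilon$. On $(\epsilon_0,\infty)$ the hypothesis gives $\mathbb P(\zeta_k \le v) \le Bv$, and the integral reduces to a Gaussian second-moment computation contributing $O(B\epsilon/\sqrt t)$. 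Adding the two pieces yields the required factor $C_t B\epsilon$; the careful $\epsilon_0$-truncation (never invoking $Bv$ for $v < \epsilon_0$) is exactly what keeps the overall constant $C$ absolute and independent of $\epsilon_0$.

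For part (2) the same trick works and is easier because we only need a small-ball bound at one scale. Taking $\alpha = t/\lambda^2$ and splitting at $\zeta_k = \lambda$,
\[
\mathbb E\exp(-t\zeta_k^2/\lambda^2) \le \mathbb P(\zeta_k < \lambda)\cdot 1 + \mathbb P(\zeta_k \ge \lambda)\, e^{-t} \le \mu + (1-\mu)\, e^{-t} =: \nu(t),
\]
and $\nu(t) \in (0,1)$ for every $t > 0$. I would fix $t$ large enough so that $\nu(t)$ sits comfortably below $1$, then choose $\lambda_1 > 0$ so small that $\mu_1 := e^{t\lambda_1^2/\lambda^2}\, \nu(t) < 1$. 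Both $\lambda_1$ and $\mu_1$ then depend only on $\lambda$ and $\mu$, as required, and plugging $R^2 = \lambda_1^2 n$, $\alpha = t/\lambda^2$ into the Chernoff inequality produces $\mathbb P(\sum\zeta_k^2 < \lambda_1^2 n) \le \mu_1^n$ immediately.
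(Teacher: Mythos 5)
Your proof is correct and follows the same strategy as the original proof in Rudelson--Vershynin (the paper itself only cites Lemma~2.2 of that reference without reproving it): a Chernoff/Laplace-transform bound reduces the tensorized probability to a single-variable estimate on $\mathbb E\exp(-\alpha\zeta_k^2)$, which is then handled by the layer-cake/integration-by-parts representation and a split of the integral. The only cosmetic difference from the original is your choice of split point ($v=\epsilon_0$ rather than $v=\epsilon$, i.e.\ $s=1/e$ in their normalization); both splits work for the same reason, namely monotonicity of $v\mapsto\mathbb P(\zeta_k<v)$ combined with $\epsilon_0\le\epsilon$. One tiny notational slip in part~(2): you prove the bound with threshold $\lambda_1^2 n$ whereas the statement uses $\lambda_1 n$; renaming $\lambda_1^2\to\lambda_1$ fixes this.
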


Consider a subset $\Omega\subset \mathbb R^n$, and let $\epsilon>0$. An $\epsilon$-net of $\Omega$ is a subset $\mathcal N \subseteq \Omega$ such that for every $x\in \Omega$ one has $\text{dist}(x,\mathcal N)\le \epsilon$. The following lemma is proved as Propositions 2.1 and 2.2 in \cite{RudVersh_rect}.

\begin{lem}[Nets]\label{card_nets}
Fix any $\epsilon>0$.
\begin{itemize}
\item[(1)] There exists an $\epsilon$-net of $S^{n-1}$ of cardinality at most 
$$\min\left\{\left(1+2\epsilon^{-1}\right)^{n},2n\left(1+2\epsilon^{-1}\right)^{n-1}\right\}.$$

\item[(2)] Let $S$ be a subset of $S^{n-1}$. There exists an $\epsilon$-net of $S$ of cardinality at most 
$$\min\left\{\left(1+4\epsilon^{-1}\right)^{n},2n\left(1+4\epsilon^{-1}\right)^{n-1}\right\}.$$
\end{itemize}
\end{lem}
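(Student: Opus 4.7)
My plan is to prove both bounds in part~(1) from a single construction---a maximal $\epsilon$-separated subset of $S^{n-1}$---placed inside two different ambient regions, and then deduce part~(2) from part~(1) by a ``snap-to-$S$'' trick.

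For part~(1), I first take a maximal $\epsilon$-separated subset $\mathcal{N}\subseteq S^{n-1}$. Maximality forces $\mathcal{N}$ to already be an $\epsilon$-net of $S^{n-1}$, since any point of $S^{n-1}$ farther than $\epsilon$ from $\mathcal{N}$ could be adjoined without destroying separation. The open Euclidean balls $\{B(x,\epsilon/2)\}_{x\in\mathcal{N}}$ are then pairwise disjoint, each of volume $\omega_n(\epsilon/2)^n$, where $\omega_n$ denotes the volume of the unit ball in $\mathbb{R}^n$. The first bound $(1+2\epsilon^{-1})^n$ follows from a direct volume comparison once one notes that these balls all sit inside $B(0,1+\epsilon/2)$. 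For the sharper $(n-1)$-dimensional bound $2n(1+2\epsilon^{-1})^{n-1}$, the key observation is that whenever $y\in B(x,\epsilon/2)$ with $x\in S^{n-1}$, the triangle inequality yields $|\|y\|-1|\le\epsilon/2$; thus the same disjoint balls actually fit inside the thin spherical annulus $A_\epsilon := \{y\in\mathbb{R}^n : 1-\epsilon/2\le\|y\|\le 1+\epsilon/2\}$. Provided $\epsilon\le 2$ (so that $A_\epsilon$ is a proper annulus), applying the mean value theorem to $t\mapsto t^n$ bounds the volume of $A_\epsilon$ by $n\epsilon(1+\epsilon/2)^{n-1}\omega_n$, and the desired cardinality estimate drops out after a short rearrangement. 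For the residual range $\epsilon>2$, the inequality $1+2\epsilon^{-1}<2\le 2n$ shows that the second bound is automatically implied by the first.

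For part~(2), I apply part~(1) with tolerance $\epsilon/2$ in place of $\epsilon$ to produce an $(\epsilon/2)$-net $\mathcal{M}$ of $S^{n-1}$ with $|\mathcal{M}|\le\min\{(1+4\epsilon^{-1})^n,\,2n(1+4\epsilon^{-1})^{n-1}\}$; the factor $4$ (rather than $2$) appearing in the statement of part~(2) comes precisely from this halving. Then, for each $y\in\mathcal{M}$, I check whether $B(y,\epsilon/2)\cap S$ is nonempty; if so, I include one chosen representative from this intersection in the output net $\mathcal{N}$, and otherwise I discard $y$. For any $x\in S$, some $y\in\mathcal{M}$ satisfies $\|x-y\|\le\epsilon/2$, which in particular certifies that $B(y,\epsilon/2)\cap S$ is nonempty, so the corresponding $x'\in\mathcal{N}$ satisfies $\|x'-y\|\le\epsilon/2$; hence $\|x-x'\|\le\epsilon$ by the triangle inequality. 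Thus $\mathcal{N}\subseteq S$ is an $\epsilon$-net of $S$ with $|\mathcal{N}|\le|\mathcal{M}|$, completing the proof.

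The entire argument consists of elementary volume comparisons in $\mathbb{R}^n$ together with a set-theoretic reduction, so no step should present a serious obstacle. The only small care required is the boundary case $\epsilon>2$ in the annulus argument, handled by the trivial inequality noted above.
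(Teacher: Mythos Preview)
Your proof is correct. The paper itself does not prove this lemma at all; it simply cites Propositions~2.1 and~2.2 of \cite{RudVersh_rect}, and the argument you give---a maximal $\epsilon$-separated set combined with volume comparison in the ball and in the spherical annulus, followed by the ``snap-to-$S$'' reduction for subsets---is precisely the standard proof that appears in that reference.
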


Next we define the small ball probability for a random vector.

\begin{defn}\label{defn_small_ball}
The L{\'e}vy concentration function of a random vector $S\in \mathbb R^m$ is defined for $\epsilon>0$ as
$$\mathcal L(S,\epsilon) = \sup_{v\in \mathbb R^m} \mathbb P(\|S-v\|_2 \le \epsilon),$$
which measures the small ball probabilities. 
\end{defn}

With Definition \ref{defn_small_ball}, it is easy to prove the following lemma. It will allow us to select a nice subset of the coefficients $a_k$ when computing the small ball probability.

\begin{lem}\label{lem_select}
Let $\xi_1,\ldots,\xi_n$ be independent random variables. For any $\sigma \subseteq \{1,\ldots, n\}$, any $a\in \mathbb R^n$ and any $\epsilon\ge 0$, we have
$$\mathcal L\left(\sum_{k=1}^n a_k \xi_k ,\epsilon\right) \le \mathcal L\left(\sum_{k\in \sigma} a_k \xi_k ,\epsilon \right).$$
\end{lem}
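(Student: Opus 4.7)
The plan is to use the independence of the $\xi_k$ together with a simple conditioning (Fubini) argument; the lemma is essentially the statement that the Lévy concentration function of an independent sum cannot exceed that of any single independent summand. I partition $\{1,\dots,n\}=\sigma\sqcup\sigma^c$ and set
$$S_1 := \sum_{k\in\sigma} a_k \xi_k, \qquad S_2 := \sum_{k\notin\sigma} a_k \xi_k,$$
so that $\sum_{k=1}^n a_k\xi_k = S_1+S_2$, and $S_1,S_2$ are independent since they are measurable with respect to disjoint sub-collections of the independent family $\{\xi_1,\dots,\xi_n\}$.

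Next I would fix an arbitrary shift $v\in\mathbb R$ and rewrite the small-ball probability as an iterated integral. By independence and Fubini,
$$\mathbb P\bigl(\|S_1+S_2-v\|_2 \le\epsilon\bigr) = \int \mathbb P\bigl(\|S_1-(v-t)\|_2 \le\epsilon\bigr)\, d\mu_{S_2}(t),$$
where $\mu_{S_2}$ denotes the law of $S_2$. For each fixed $t$ the integrand is bounded by $\mathcal L(S_1,\epsilon)$ by the very definition of the Lévy concentration function, which takes the supremum over all admissible shifts of $S_1$. Since $\mu_{S_2}$ is a probability measure of total mass one, the integral is itself at most $\mathcal L(S_1,\epsilon)$. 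Taking the supremum over $v$ on the left-hand side then yields $\mathcal L\bigl(\sum_k a_k \xi_k,\epsilon\bigr)\le \mathcal L(S_1,\epsilon)$, as claimed.

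There is no substantive obstacle: the lemma is a textbook property of the concentration function, sometimes phrased as "conditioning on a subset of independent summands can only sharpen the small-ball bound." The only mild point of care is to condition on the discarded variables $\xi_k$ with $k\notin\sigma$ rather than on the ones kept in $\sigma$, which is what allows the $S_2$-integral to collapse cleanly after passing to the supremum over shifts.
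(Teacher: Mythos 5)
Your proof is correct and is the standard conditioning argument that the paper has in mind when it says the lemma "is easy to prove" (the paper gives no explicit proof). Decomposing the sum into the $\sigma$-part and its complement, using independence and Fubini to condition on the discarded part, bounding the inner probability by $\mathcal L(S_1,\epsilon)$, and then taking the supremum over shifts is exactly the intended route.
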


The following three lemmas give some useful small ball probability bounds. They correspond to \cite[Lemma 3.2]{RudVersh_rect}, \cite[Corollary 2.9]{RudVersh_square} and \cite[Corollary 2.4]{HansonW} respectively.

\begin{lem}\label{lem_Paley}
Let $\xi$ be a random variable with mean zero, unit variance, and finite fourth moment. Then for every $\epsilon \in (0,1)$, there exists a $p\in (0,1)$ which depends only on $\epsilon$ and on the fourth moment, and such that
$$\mathcal L(\xi,\epsilon) \le p.$$
\end{lem}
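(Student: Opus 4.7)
The plan is to prove the lemma via the Paley--Zygmund (second-moment) inequality applied to $(\xi - v)^2$ for an arbitrary real shift $v$, which accounts for the name of the lemma. The hypotheses on $\xi$ convert neatly into moment bounds for this variable: $\mathbb E(\xi-v)^2 = 1 + v^2$ by the mean-zero, unit-variance assumption, while $\mathbb E(\xi-v)^4 \le 8(M + v^4)$ by the elementary inequality $(x-y)^4 \le 8(x^4+y^4)$, where $M := \mathbb E\xi^4 \ge (\mathbb E\xi^2)^2 = 1$ is the finite fourth moment.

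For fixed $v$, I would apply Paley--Zygmund to $Z := (\xi-v)^2$ with threshold $\theta := \epsilon^2/(1+v^2)$, which lies in $(0,1)$ because $\epsilon < 1 \le \sqrt{1+v^2}$. This yields
$$\mathbb P\bigl(|\xi-v|>\epsilon\bigr) \;\ge\; (1-\theta)^2\,\frac{(1+v^2)^2}{\mathbb E(\xi-v)^4} \;=\; \frac{(1+v^2-\epsilon^2)^2}{\mathbb E(\xi-v)^4}.$$
It then remains to check that the right-hand side is bounded below by a positive quantity depending only on $\epsilon$ and $M$, uniformly in $v\in\mathbb R$. I would do this by a two-case split. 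If $v^4 \le M$, then the denominator is at most $16M$ and the numerator is $\ge (1-\epsilon^2)^2$. If instead $v^4 > M \ge 1$, then the denominator is at most $16v^4$, while $1 + v^2 - \epsilon^2 \ge v^2(1-\epsilon^2)$ (since $v^2 \ge 1 > \epsilon^2$), giving numerator $\ge v^4(1-\epsilon^2)^2$. In both cases the ratio is at least $(1-\epsilon^2)^2/(16M)$, so taking the supremum over $v$ produces
$$\mathcal L(\xi,\epsilon) \;\le\; p \;:=\; 1 - \frac{(1-\epsilon^2)^2}{16M} \;<\; 1,$$
with $p$ depending only on $\epsilon$ and on the fourth moment $M$, as required.

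There is no serious obstacle. The one delicate point is the uniform-in-$v$ lower bound, which is why the case split on whether $|v|$ is large or small compared to $M^{1/4}$ is convenient; the convexity estimate on $\mathbb E(\xi-v)^4$ ensures that numerator and denominator grow at the same $v^4$ rate for large $v$, so the ratio does not degenerate as $|v|\to\infty$.
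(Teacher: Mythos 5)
Your proof is correct, and the Paley--Zygmund inequality applied to $(\xi-v)^2$ with the uniform-in-$v$ bound via the elementary estimate $\mathbb E(\xi-v)^4 \le 8(\mathbb E\xi^4 + v^4)$ is precisely the argument behind the cited source (the paper itself states this lemma without proof, citing \cite[Lemma~3.2]{RudVersh_rect}, and the label \texttt{lem\_Paley} already signals that Paley--Zygmund is the intended mechanism). Your two-case split on $|v|$ versus $M^{1/4}$ is a clean way to make the uniformity in $v$ explicit and matches the standard presentation.
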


\begin{lem}\label{small_CLT}
Let $\xi_1, \ldots, \xi_n$ be independent centered random variables with variances at least 1 and third moments bounded by $B$. Then for every $a\in \mathbb R^n$ and every $\epsilon \ge 0$, one has
$$\mathcal L\left(\sum_{k=1}^n a_k \xi_k,\epsilon\right) \le \sqrt{\frac{2}{\pi}} \frac{\epsilon}{\|a\|_2} + \tilde CB\left( \frac{\|a\|_3}{\|a\|_2}\right)^3,$$
where $\tilde C$ is an absolute constant.
\end{lem}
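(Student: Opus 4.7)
The plan is to deduce this from the Berry--Esseen central limit theorem. By homogeneity we may rescale and assume $\|a\|_2=1$, so that $S:=\sum_{k=1}^n a_k \xi_k$ has variance $\sigma^2 = \sum_k a_k^2 \Var(\xi_k) \ge \sum_k a_k^2 = 1$. Writing $v_k = a_k \xi_k$, the Lyapunov ratio for the sum is
\begin{equation*}
\frac{\sum_k \mathbb E |v_k|^3}{\bigl(\sum_k \Var(v_k)\bigr)^{3/2}} \le \frac{B \sum_k |a_k|^3}{\bigl(\sum_k a_k^2\bigr)^{3/2}} = B\,\|a\|_3^3,
\end{equation*}
using the assumptions $\Var(\xi_k)\ge 1$ and $\mathbb E|\xi_k|^3 \le B$.

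The classical (non-identically distributed) Berry--Esseen theorem then gives an absolute constant $C_0$ such that
\begin{equation*}
\sup_{t\in\mathbb R}\bigl| \mathbb P(S \le t) - \Phi(t/\sigma) \bigr| \le C_0 B\,\|a\|_3^3,
\end{equation*}
where $\Phi$ is the standard normal CDF. For any $v \in \mathbb R$, the probability of $S$ landing in the interval $[v-\epsilon,v+\epsilon]$ is at most the corresponding Gaussian probability plus twice this error, i.e.
\begin{equation*}
\mathbb P(|S-v|\le \epsilon) \le \Phi\!\left(\frac{v+\epsilon}{\sigma}\right) - \Phi\!\left(\frac{v-\epsilon}{\sigma}\right) + 2C_0 B\,\|a\|_3^3.
\end{equation*}

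Since the standard normal density is bounded by $1/\sqrt{2\pi}$, the Gaussian increment above is bounded by $\sqrt{2/\pi}\,\epsilon/\sigma \le \sqrt{2/\pi}\,\epsilon$ (as $\sigma\ge 1$). Taking the supremum over $v$ and rewriting in the original normalization gives exactly
\begin{equation*}
\mathcal L\!\left(\sum_k a_k\xi_k,\epsilon\right) \le \sqrt{\frac{2}{\pi}}\frac{\epsilon}{\|a\|_2} + \tilde C B \left(\frac{\|a\|_3}{\|a\|_2}\right)^3
\end{equation*}
with $\tilde C = 2 C_0$. There is no real obstacle here; the only subtlety is making sure the Berry--Esseen bound is applied in a form that handles non-identically distributed summands with possibly unequal variances, which is exactly Esseen's original formulation. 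The fact that variances are only bounded below by $1$ (not equal to $1$) is harmless because it only makes $\sigma$ larger, which can only shrink the Gaussian increment and the Lyapunov ratio.
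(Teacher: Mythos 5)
Your proof is correct and is essentially the standard one: the paper itself does not prove this lemma but cites Corollary 2.9 of Rudelson--Vershynin \cite{RudVersh_square}, whose proof is precisely the Berry--Esseen argument you give (normalize $\|a\|_2=1$, bound the Lyapunov ratio by $B\|a\|_3^3$, apply the non-i.i.d.\ Berry--Esseen theorem to compare with the Gaussian CDF, and use the $1/\sqrt{2\pi}$ density bound together with $\sigma\ge 1$). You also correctly identified that the hypothesis ``variance at least $1$'' only helps, since it increases $\sigma$, shrinking both the Gaussian increment and the Lyapunov ratio.
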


\begin{lem} \label{lem_smallball}
Let $A$ be a fixed $N\times M$ matrix. Consider a random vector $\xi=(\xi_1, \ldots, \xi_M)$ where $\xi_i$ are independent random variables satisfying $\mathbb E\xi_i =0$, $\mathbb E\xi_i^2 = 1$ and $\|\xi_i\|_{\psi_2} \le K$. Then for every $y\in \mathbb R^N$, we have
$$\mathbb P\left\{ \|A\xi - y\|_2 \le \frac{1}{2} \|A\|_{HS} \right\} \le 2\exp \left( -\frac{c\|A\|_{HS}^2}{K^4\|A\|^2}\right).$$
\end{lem}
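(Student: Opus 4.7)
The plan is to recognize $\|A\xi-y\|_2^2$ as a quadratic-plus-linear function of the subgaussian vector $\xi$, compute its mean explicitly, and then control the probability of a large downward deviation by combining the Hanson--Wright inequality (for the quadratic part) with a standard subgaussian Hoeffding bound (for the linear part). The resulting estimate matches the one attributed to \cite[Corollary 2.4]{HansonW}.

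First, expand
$$\|A\xi-y\|_2^2 = \xi^{\top}(A^{\top}A)\xi - 2\langle A^{\top}y,\xi\rangle + \|y\|_2^2.$$
Using $\mathbb{E}\xi_i=0$ and $\mathbb{E}\xi_i^2=1$, the mean is $\|A\|_{HS}^2+\|y\|_2^2$. Hence the event $\{\|A\xi-y\|_2\le \tfrac{1}{2}\|A\|_{HS}\}$ is equivalent to $\{\|A\xi-y\|_2^2\le \tfrac{1}{4}\|A\|_{HS}^2\}$, which forces
$$Q+L \ge T, \qquad Q:=\|A\|_{HS}^2-\xi^{\top}A^{\top}A\,\xi,\quad L:=2\langle A^{\top}y,\xi\rangle,\quad T:=\tfrac{3}{4}\|A\|_{HS}^2+\|y\|_2^2.$$
A union bound then gives $\mathbb{P}(Q+L\ge T)\le \mathbb{P}(Q\ge T/2)+\mathbb{P}(L\ge T/2)$.

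For $Q$, I would apply the Hanson--Wright inequality to the symmetric matrix $B=A^{\top}A$, for which $\|B\|=\|A\|^2$ and $\|B\|_{HS}^2=\mathrm{tr}(A^{\top}AA^{\top}A)\le \|A\|^2\|A\|_{HS}^2$. With $u=T/2\ge \tfrac{3}{8}\|A\|_{HS}^2$, both arguments of the standard $\min$ in Hanson--Wright evaluate to at least $c\|A\|_{HS}^2/(K^4\|A\|^2)$ (using $K\ge 1$), producing the desired bound $\exp(-c\|A\|_{HS}^2/(K^4\|A\|^2))$. For $L$, the linear form $\sum_i(A^{\top}y)_i\xi_i$ is a sum of independent centered subgaussians with $\psi_2$-norm at most $CK\|A^{\top}y\|_2\le CK\|A\|\,\|y\|_2$, so Hoeffding's inequality yields $\mathbb{P}(L\ge T/2)\le \exp(-cT^2/(K^2\|A\|^2\|y\|_2^2))$ when $\|y\|_2>0$ (and the bound is trivial when $\|y\|_2=0$).

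The main obstacle is ensuring that the final estimate is genuinely independent of $\|y\|_2$; in particular, when $\|y\|_2\gg \|A\|_{HS}$ one must check that the linear tail does not degrade. The key saving is the AM--GM estimate $T\ge \sqrt{3}\,\|A\|_{HS}\|y\|_2$, which forces $T^2/\|y\|_2^2\ge 3\|A\|_{HS}^2$ regardless of the size of $\|y\|_2$. This gives $\mathbb{P}(L\ge T/2)\le \exp(-c\|A\|_{HS}^2/(K^2\|A\|^2))$, which is stronger than what is needed. Combining with the Hanson--Wright bound for $Q$ yields the stated small-ball estimate with the constant $c$ depending only on absolute constants, completing the proof.
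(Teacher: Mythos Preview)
Your proof is correct and is essentially the argument used in \cite[Corollary 2.4]{HansonW}, which the paper simply cites without reproducing. The decomposition into the centered quadratic form $Q$ and the linear form $L$, the application of Hanson--Wright to $Q$ via $\|A^{\top}A\|_{HS}\le\|A\|\,\|A\|_{HS}$, and the AM--GM trick $T\ge\sqrt{3}\,\|A\|_{HS}\|y\|_2$ to make the Hoeffding bound on $L$ uniform in $y$ are exactly the steps in that reference.
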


%
%

%

\section{Decomposition of the sphere}\label{sec_decompose}

Now we begin the proof of Theorem \ref{main_sub}. We will make use of a partition of the unit sphere into two sets of compressible and incompressible vectors. They are first defined in \cite{RudVersh_square}.

\begin{defn}\label{defn_compress}
Let $\delta, \rho \in (0,1]$. A vector $x\in {\mathbb R}^n$ is called sparse if $|{\rm{supp}}(x)| \le \delta n$. A vector $x\in S^{n-1}$ is called compressible if $x$ is within Euclidean distance $\rho$ from the set of all sparse vectors. A vector $x\in S^{n-1}$ is called incompressible if it is not compressible. The sets of sparse, compressible and incompressible vectors will be denoted by $\text{Sparse}_n (\delta)$, $Comp_n (\delta,\rho)$ and $Incomp_n(\delta,\rho)$. We sometimes omit the subindex $n$ when the dimension is clear.
\end{defn}
Using the decomposition $S^{n-1} = Comp \cup Incomp$, we break the invertibility problem into two subproblems, for compressible and incompressible vectors:
\begin{align}
\mathbb P \left( s_n(PY-A) \le \epsilon (\sqrt{N}- \sqrt{n-1}) \right) \le  \mathbb P\left( \inf_{x\in Comp_n(\delta,\rho)} \|(PY-A)x\|_2 \le \epsilon (\sqrt{N}- \sqrt{n-1}) \right) \\
+ \mathbb P\left( \inf_{x\in Incomp_n(\delta,\rho)} \|(PY-A)x\|_2 \le \epsilon (\sqrt{N}- \sqrt{n-1}) \right).\label{incomp_bound}
\end{align}
The bound for compressible vectors follows from the following lemma, which is a variant of Lemma 3.3 from \cite{RudVersh_square}.

\begin{lem}\label{comp_bound} 
Suppose the assumptions in Theorem \ref{main_sub} hold. Then there exist $\rho, c_0, c_1 > 0$ that depend only on $C_1$, and such that for $\delta \le \min\left\{c_1 N/\left(n K^{4}\log K\right),1\right\}$, we have 
$$ \mathbb P\left( \inf_{x\in Comp_n(\delta,\rho)} \|(PY - A)x\|_2 \le c_0 \sqrt{N}\right) \le e^{-c_0 N/K^4}.$$
\end{lem}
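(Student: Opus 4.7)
\textbf{Proof proposal for Lemma \ref{comp_bound}.} The plan is a standard small-ball-probability $\epsilon$-net argument, where the key arithmetic is verifying that the covering-number bound for compressible vectors does not defeat the tail bound $e^{-cN/K^4}$ coming from Lemma \ref{lem_smallball}.

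First I would fix any unit vector $x\in S^{n-1}$ and show a pointwise bound. Because the rows of $Y$ are independent copies of $(\xi_1,\ldots,\xi_n)$, the vector $Yx\in\mathbb R^M$ has independent coordinates $(Yx)_i = \langle y_i,x\rangle$ with mean zero, unit variance, and subgaussian moment at most $CK$ (by the standard subgaussian bound for sums of independent subgaussians with $\|x\|_2=1$). Applying Lemma \ref{lem_smallball} to the matrix $P$ and deterministic vector $Ax$, and using $PP^T=I_N$ so that $\|P\|=1$ and $\|P\|_{HS}=\sqrt N$, gives
\begin{equation*}
\mathbb P\!\left(\|(PY-A)x\|_2 \le \tfrac12\sqrt N\right) \le 2\exp(-c_2 N/K^4)
\end{equation*}
for an absolute constant $c_2>0$.

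Next I would cover $Comp_n(\delta,\rho)$ by a short net. Choose $\rho$ (depending only on $C_1$) so that $2\rho\,\|PY-A\|\le 2\rho\cdot C_1\sqrt N \le \tfrac18\sqrt N$, for instance $\rho = 1/(16C_1)$. Using Lemma \ref{card_nets} on each coordinate subspace of dimension $k:=\lceil\delta n\rceil$ and taking the union over support sets, one obtains a $\rho$-net $\mathcal N$ of the set of sparse unit vectors of cardinality at most $\binom{n}{k}(C/\rho)^{k} \le (C'/\delta)^{k}$, which is then a $2\rho$-net of $Comp_n(\delta,\rho)$. If the infimum in the lemma is at most $c_0\sqrt N$ then some $x'\in\mathcal N$ satisfies $\|(PY-A)x'\|_2 \le c_0\sqrt N + \tfrac18\sqrt N \le \tfrac12\sqrt N$ provided $c_0$ is small enough. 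A union bound thus gives
\begin{equation*}
\mathbb P\!\left(\inf_{x\in Comp_n(\delta,\rho)}\|(PY-A)x\|_2\le c_0\sqrt N\right) \le |\mathcal N|\cdot 2\exp(-c_2 N/K^4).
\end{equation*}

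The step I expect to be the crux is balancing the exponents. With the hypothesis $\delta\le c_1 N/(nK^4\log K)$ we have $\delta n \le c_1 N/(K^4\log K)$ and, using $n\le\Lambda N$ and $K\ge c$, $\log(1/\delta)\le C\log K$. Hence
\begin{equation*}
\log|\mathcal N| \le k\bigl[\log(C'/\rho)+\log(1/\delta)\bigr] \le \frac{c_1 N}{K^4\log K}\cdot C\log K \le \frac{Cc_1 N}{K^4}.
\end{equation*}
Choosing $c_1$ so small that $Cc_1\le c_2/2$ lets the net factor be absorbed, yielding the desired bound $e^{-c_0 N/K^4}$ after renaming constants. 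The delicate point is precisely the $\log K$ in the denominator of the constraint on $\delta$: it is what ensures $\log(1/\delta)\lesssim \log K$, so that the entropy of $Comp_n(\delta,\rho)$ is comparable to $N/K^4$ and is beaten by the small-ball tail; without this factor the union bound would fail in the regime where $K$ grows with $N$.
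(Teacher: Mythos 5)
Your proof is correct and follows essentially the same route as the paper: a pointwise small-ball bound from Lemma \ref{lem_smallball} gives $e^{-cN/K^4}$, a net over sparse unit vectors plus a union bound over supports handles the sphere, and the $\log K$ factor in the constraint on $\delta$ is precisely what keeps the net entropy below $N/K^4$. The only (easily repaired) imprecision is the claim $\log(1/\delta)\le C\log K$, which holds at the maximal $\delta=c_1N/(nK^4\log K)$ but not for all smaller admissible $\delta$; the correct observation is that what you actually need, $\lceil\delta n\rceil\log(e/\delta)\sim \delta n\log(e/\delta)$, is increasing in $\delta$, so bounding it at the maximal $\delta$ suffices. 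The paper packages the steps differently (fixed-support net, then a union over supports, then a separate sparse-to-compressible reduction), but the substance is identical to yours.
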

\begin{proof}
We first prove a similar estimate for sparse vectors. For any $x\in S^{n-1}$, we define the random vector $\zeta:= Yx \in \mathbb R^N$. It is easy to verify that $\mathbb E\zeta_i =0$, $\mathbb E\zeta_i^2 = 1$ and $\|\zeta_i\|_{\psi_2} \le CK$. Then with $\|P\|=1$ and $\|P\|_{HS}^2 = N$, we conclude from Lemma \ref{lem_smallball} that
\begin{equation}\label{incomp_key}
\mathbb P\left\{ \|(PY - A)x\|_2 \le \frac{1}{2}\sqrt{N}\right\} \le 2\exp \left( -\frac{cN}{K^4}\right).
\end{equation}
Let $S_1:=\{x\in S^{n-1}: x_k = 0 , k > \left\lceil\delta n\right\rceil\}.$ By Lemma \ref{card_nets}, there exists an $\epsilon$-net $\mathcal N$ of $S_1$ with $|\mathcal N| \le (5/\epsilon)^{\left\lceil\delta n\right\rceil}$. Then using (\ref{incomp_key}) and taking the union bound, we get
\begin{equation} \label{union1}
\mathbb P\left( \inf_{ x\in \mathcal N} \|(PY-A)x\|_2 \le \frac{1}{2}\sqrt{N}\right) \le 2e^{ - {cN}/{K^4}}\left({5}{\epsilon}^{-1}\right)^{\left\lceil\delta n\right\rceil}. 
\end{equation}
Let $V$ be the event that $\|(PY-A)y\|\le \sqrt{N}/4$ for some $y\in S_1$. By the assumptions of Theorem \ref{main_sub}, we have
$$\|PY-A\|\le \|Y\| + \|A\| \le C_1 \sqrt{N}.$$ 
Assume that $V$ occurs and choose a point $x\in \mathcal N$ such that $\|y-x\|\le \epsilon$. Then 
$$\|(PY - A)x\|_2 \le \|(PY-A)y\|_2 + \|PY-A\|\|x-y\|_2 \le \frac{1}{4}\sqrt{N} + C_1\epsilon \sqrt{N} \le \frac{1}{2}\sqrt{N},$$
if we choose $\epsilon \le 1/(4C_1)$. Fix one such $\epsilon$, using (\ref{union1}) we obtain that 
$$ \mathbb P\left( \inf_{x\in S_1} \|(PY-A)x\|_2 \le \frac{1}{4} \sqrt{N}\right) = \mathbb P(V) \le 2e^{-{cN}/{K^4}} \left(5\epsilon^{-1}\right)^{\left\lceil\delta n\right\rceil} \le e^{-c_2 N/K^4},$$ 
if we choose $c_1$ (and hence $\delta$) to be sufficiently small. We use this result and take the union bound over all $\lceil \delta n\rceil$-element subsets $\sigma$ of $\{1,\ldots, n\}$:
\begin{align}
& \mathbb P \left( \inf_{x\in Sparse(\delta)\cap S^{n-1}} \|(PY-A)x\|_2 \le \frac{1}{4} \sqrt{N}\right) \nonumber\\
= & \mathbb P \left( \exists \sigma, |\sigma|=\lceil \delta n\rceil: \inf_{x\in \mathbb R^{\sigma}\cap S^{n-1}} \|(PY-A)x\|_2 \le \frac{1}{4} \sqrt{N}\right) \nonumber\\
\le & \begin{pmatrix} 
  n \\ 
  \lceil \delta n\rceil 
\end{pmatrix} e^{-c_2 N/K^4} \le \exp\left( 4e\delta \log \left(\frac{e}{\delta}\right)n - \frac{c_2 N}{K^4}\right) \le \exp\left(-\frac{c_2 N}{2K^4}\right),\label{event_origin} 
\end{align}
with an appropriate choice of $c_1$.

Now we deduce the estimate for compressible vectors. Let $c_3>0$ and $\rho\in (0,1/2)$ to be chosen later. We need to control the event $W$ that $\|(PY-A)x\|_2 \le c_3 \sqrt{N}$ for some vector $x\in Comp(\delta,\rho)$. Assume $W$ occurs, then every such vector $x$ can be written as a sum $x=y+z$ with $y\in Sparse(\delta)$ and $\|z\|_2 \le \rho$. Thus $\|y\|_2 \ge 1- \rho \ge 1/2$, and
$$\|(PY-A)y\|_2 \le \|(PY-A)x\|_2 + \|(PY-A)\| \|z\|_2 \le c_3 \sqrt{N} + \rho C_1\sqrt{N}.$$
We choose $c_3 = 1/16$ and $\rho=1/(16C_1)$, so that $\|(PY-A)y\|_2 \le \sqrt{N}/8$. Since $\|y\|_2\ge 1/2$, we can find a unit vector $u= y/\|y\|_2 \in Sparse(\delta)$ such that $\|(PX-A)u\|_2 \le \sqrt{N}/4$. This shows that event $W$ implies the event in (\ref{event_origin}), so we have $\mathbb P(W) \le e^{-c_2 N/(2K^4)}$. This concludes the proof.
\end{proof}

\begin{remark}
If $n< c_1 N /(K^4\log K)$, then all the vectors in $S^{n-1}$ are in $Comp (\delta,\rho)$ and Lemma \ref{comp_bound} already concludes the proof of Theorem \ref{main_sub}. Hence throughout the following sections, it suffices to assume 
\begin{equation}\label{suff_largen}
n\ge c_1 N /(K^4\log K).
\end{equation}
\end{remark}

It remains to prove the bound for incompressible vectors in (\ref{incomp_bound}). Define the aspect ratio $\lambda:=n/N$. We will divide the proof into two cases: the case where $c_1 /(K^4\log K) \le \lambda \le \lambda_0$ for some constant $0<\lambda_0 < 1$, and the case where $\lambda_0 < \lambda \le 1$. 
We record here an important property of the incompressible vectors, which is proved in Lemma 3.4 of \cite{RudVersh_square}.
\begin{lem}[Incompressible vectors are spread]\label{lem_spread}
Let $x\in Incomp_n(\delta,\rho)$. Then there exists a set $\sigma \equiv \sigma(x) \subseteq \{1,\ldots, n\}$ of cardinality $|\sigma|\ge \frac{1}{2}\rho^2 \delta n$ and such that 
\begin{equation}\label{eq_spread}
\frac{\rho}{\sqrt{2n}} \le |x_k| \le \frac{1}{\sqrt{\delta n}} \ \ \text{ for all } k\in \sigma.
\end{equation}
\end{lem}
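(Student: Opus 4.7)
The plan is to partition the coordinates $\{1,\dots,n\}$ into three pieces according to the size of $|x_k|$: a ``big'' set $\sigma_B:=\{k:|x_k|>1/\sqrt{\delta n}\}$, a ``small'' set $\sigma_S:=\{k:|x_k|<\rho/\sqrt{2n}\}$, and the ``medium'' set $\sigma:=\{1,\dots,n\}\setminus(\sigma_B\cup\sigma_S)$. By construction, the set $\sigma$ automatically satisfies the two-sided bound (\ref{eq_spread}), so the entire task is to show the cardinality lower bound $|\sigma|\ge\tfrac12\rho^2\delta n$.

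First I would control $\sigma_B$. Since $\|x\|_2=1$, one has
\[
1\ge \sum_{k\in\sigma_B}x_k^2>\frac{|\sigma_B|}{\delta n},
\]
hence $|\sigma_B|<\delta n$, which means any vector supported on $\sigma_B$ is sparse in the sense of Definition \ref{defn_compress}.

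The main step is a contradiction argument. Suppose, for the sake of contradiction, that $|\sigma|<\tfrac12\rho^2\delta n$, and let $y\in\mathbb R^n$ be the vector obtained from $x$ by zeroing out all coordinates in $\sigma\cup\sigma_S$. Then $\mathrm{supp}(y)\subseteq\sigma_B$, so $y$ is sparse by the previous paragraph. I would then estimate the distance from $x$ to this sparse vector by splitting the sum:
\[
\|x-y\|_2^2=\sum_{k\in\sigma}x_k^2+\sum_{k\in\sigma_S}x_k^2\le |\sigma|\cdot\frac{1}{\delta n}+n\cdot\frac{\rho^2}{2n}<\frac{\rho^2}{2}+\frac{\rho^2}{2}=\rho^2,
\]
using the definition of $\sigma$ (upper bound $|x_k|\le 1/\sqrt{\delta n}$) for the first sum and the definition of $\sigma_S$ for the second. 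This yields $\|x-y\|_2<\rho$ with $y$ sparse, contradicting the assumption that $x\in Incomp_n(\delta,\rho)$.

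There is essentially no obstacle here beyond the bookkeeping; the only thing to be careful about is making sure the constants in the thresholds $1/\sqrt{\delta n}$ and $\rho/\sqrt{2n}$ are chosen so that each of the two tail contributions to $\|x-y\|_2^2$ is at most $\rho^2/2$, which is precisely why the factor $\sqrt{2}$ appears in the lower bound of (\ref{eq_spread}) and why the final cardinality bound carries the factor $\tfrac12\rho^2$. Once those thresholds are fixed, both the big-coordinate count and the contradiction step follow from elementary $\ell^2$ bookkeeping, and the lemma follows.
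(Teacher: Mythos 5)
Your proof is correct, and it is essentially the contrapositive packaging of the argument cited from Rudelson--Vershynin (Lemma 3.4 of \cite{RudVersh_square}): they bound $\sum_{k\in\sigma}x_k^2$ below by $\rho^2/2$ directly from incompressibility and then divide by the pointwise bound $x_k^2\le 1/(\delta n)$, whereas you run the same $\ell^2$ bookkeeping as a contradiction. The constants $\sqrt 2$ and $\tfrac12\rho^2$ arise in exactly the same way in both versions, so this is the same proof.
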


%

\section{Tall matrices}\label{sec_tall}

In this section, we deal with the probability in (\ref{incomp_bound}) when $c_1 /(K^4\log K) \le \lambda \le \lambda_0$ for some constant $\lambda_0\in (0,1)$. The value of $\lambda_0$ will be chosen later in Section \ref{sec_square} (see (\ref{lambda0})), and it only depends on $\Lambda$, $C_1$ and the maximal fourth moment of the entries of $Y$. Then it is equivalent to control the probability
$$ \mathbb P\left( \inf_{x\in Incomp_n(\delta,\rho)} \|(PY-A)x\|_2 \le t \sqrt{N} \right)$$
for any $t\ge 0$. 

Let $x$ be a vector in $Incomp_n(\delta,\rho)$, where we fix $\delta = c_1 N/\left(n K^{4}\log K\right)$ and $0 <\rho \le 1/(16C_1)$ (see Lemma \ref{comp_bound}). Take the set $\sigma$ given by Lemma \ref{lem_spread}. Note that the entries of $Yx$ are of the form $(Yx)_i=\sum_{k=1}^n Y_{ik}x_k$, $1\le i \le M$, where $Y_{ik}$ are independent centered random variables with unit variance and bounded fourth moment. Hence we can use Lemma \ref{lem_select} and Lemma \ref{small_CLT} to get that
\begin{equation}\label{small_entrywise}
\mathcal L\left((Yx)_i,t\right) \le \sqrt{\frac{2}{\pi}} \frac{t}{\|P_\sigma x\|_2} + C\left( \frac{\|P_\sigma x\|_3}{\|P_\sigma x\|_2}\right)^3 \le C_2(\rho) \left(\frac{t}{\sqrt{\delta}} + \frac{1}{\delta \sqrt{n}}\right),
\end{equation}
for some constant $C_2(\rho)>0$ depending only on $\rho$ and the maximal fourth moment. Here we used the bound 
$$\|P_\sigma x\|_2 \ge \frac{1}{2}\rho^2\sqrt{\delta}, \ \ \left( \frac{\|P_\sigma x\|_3}{\|P_\sigma x\|_2}\right)^3 \le \frac{2}{\rho^2 \delta \sqrt{ n}},$$
deduced from Lemma \ref{lem_spread}. With (\ref{small_entrywise}) as the input, the next lemma provides a small ball probability bound for the random vector $PYx$. 

\begin{lem}[Corollary 1.4 of \cite{RudVersh_smallball}]
Consider a random vector $X=(\xi_1, \ldots, \xi_M)$ where $\xi_i$ are real-valued independent random variables. Let $t,p\ge 0$ be such that
$$\mathcal L(\xi_i,t) \le p \ \text{ for all } i = 1,\ldots,M.$$
Let $P$ be an orthogonal projection in $\mathbb R^M$ onto an $N$-dimensional subspace. Then 
$$\mathcal L\left(PX, t\sqrt{N}\right) \le (Cp)^N,$$
where $C$ is an absolute constant.
\end{lem}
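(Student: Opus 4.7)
The plan is Fourier-analytic, exploiting the projection identity $PP^T=I_N$. I would first invoke a multivariate Esseen--Hal\'asz concentration inequality, which bounds the L\'evy concentration function of any random vector $Z\in\mathbb{R}^N$ above by a constant-to-the-$N$ multiple of a Fourier integral $\int_{B_N} |\phi_Z(\theta)|\,d\theta$ over a suitable Euclidean ball $B_N\subset\mathbb{R}^N$. Applied with $Z=PX$ and the radius tuned to $\epsilon=t\sqrt{N}$, this reduces the problem to bounding an $N$-dimensional integral of $|\phi_{PX}|$.

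By independence of the coordinates $\xi_i$, the characteristic function factors as
$$\phi_{PX}(\theta) \;=\; \phi_X(P^T\theta) \;=\; \prod_{i=1}^M \phi_{\xi_i}\bigl((P^T\theta)_i\bigr).$$
Setting $u:=P^T\theta\in\mathbb{R}^M$, the identity $PP^T=I_N$ makes $P^T$ an isometry, so $\|u\|_2=\|\theta\|_2$ and $\sum_{i=1}^M u_i^2=\|\theta\|^2$. This quadratic constraint is the only geometric input from $P$, and it is what couples the otherwise unrelated 1D factors $\phi_{\xi_i}(u_i)$ into an $N$-dimensional estimate.

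The heart of the argument is a one-dimensional characteristic-function estimate extracted from the hypothesis $\mathcal L(\xi_i,t)\le p$. Using symmetrization ($|\phi_{\xi_i}(s)|^2=\phi_{\xi_i-\xi_i'}(s)$) together with the smoothing/decomposition technique of Rudelson--Vershynin's small-ball paper, one obtains a bound of the form $|\phi_{\xi_i}(s)|^2\le 1-c\,\mu(s;t,p)$ whose defect $\mu$ carries the correct dependence on $p$ (morally, it controls $|\phi_{\xi_i}(s)|$ by something comparable to $p$ once $|s|\gtrsim 1/t$). Multiplying these bounds over $i=1,\ldots,M$, exploiting the constraint $\sum_i u_i^2=\|\theta\|^2$ to collapse the $M$-fold product into a single $\|\theta\|^2$-dependent exponential, and then integrating over $\theta\in B_N$, one picks up a factor $(Cp/t)^N$ which combines with the Esseen prefactor $(Ct)^N$ to give the advertised $(Cp)^N$.

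The main obstacle is producing this refined 1D characteristic-function estimate. A naive pointwise bound using only symmetrization, such as $|\phi_{\xi_i}(s)|^2\le 1-c\min(s^2t^2,1)$, loses the $p$-dependence entirely and would yield merely $\mathcal L(PX,t\sqrt N)\le C^N$ rather than $(Cp)^N$. Extracting the correct $p$-dependence requires a mixture decomposition of each $\xi_i$ into a component whose law has density bounded by $C/t$ (which supplies the real anti-concentration) and an exceptional component of probability at most $Cp$; the density-case Fourier estimate is then applied conditionally on which coordinates fall into the exceptional event, and the total cost is tallied by a union bound over these realizations. This reduction to the bounded-density case is the non-trivial content of Corollary~1.4 of \cite{RudVersh_smallball}, and it is the only step that goes beyond standard Fourier inversion and the geometry of orthogonal projections.
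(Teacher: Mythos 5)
The paper does not prove this lemma: it is stated as a direct citation of Corollary~1.4 of \cite{RudVersh_smallball} and used as a black box, so there is no in-paper proof to compare against.

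Judged on its own, the Fourier sketch has a gap that your last paragraph already half-acknowledges, and the proposed repair does not clearly close it. No pointwise characteristic-function estimate of the schematic form $|\phi_{\xi_i}(s)|^2\le 1-c\,\mu(s;t,p)$ carrying the correct $p$-dependence holds for arbitrary $\xi_i$ with $\mathcal L(\xi_i,t)\le p$: a two-point distribution with atoms of mass near $1/2$ has $|\phi_{\xi_i}|$ returning periodically to values near $1$, uniformly in $p$. Your fix---decompose each $\xi_i$ into a bounded-density piece and an exceptional piece of probability $\lesssim p$, apply the bounded-density Fourier estimate conditionally, and union-bound over the exceptional configurations---faces a combinatorial obstruction: there are $\binom{M}{k}$ ways to choose $k$ exceptional coordinates, $M$ may be far larger than $N$, and on configurations with many exceptional coordinates there is no usable density bound for $PX$, so it is not clear the total is dominated by $(Cp)^N$. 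A cleaner route, and I believe closer to the one in the cited source, is to bypass Esseen and argue with densities directly: smooth each coordinate via $\xi_i\mapsto\xi_i+U_i$ with $U_i\sim\mathrm{Unif}[-t,t]$, which gives each coordinate density $\le p/(2t)$; use a Ball-type slicing estimate (this is where $PP^T=I_N$ actually enters) to bound the density of $P(X+U)$ by $(Cp/t)^N$; integrate over a ball of radius $\asymp t\sqrt{N}$ to obtain $(Cp)^N$; and use $\mathbb E\|PU\|_2^2\asymp Nt^2$ together with the independence of $PX$ and $PU$ to transfer the bound back to $PX$. The non-trivial geometric input is the density-of-projection estimate, for which your sketch does not supply an argument.
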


Applying the above lemma to random vector $Yx$, we obtain that 
\begin{equation}\label{small_PY}
\mathbb P\left(\|(PY-A)x\|_2 \le t \sqrt{N} \right)\le \mathcal L\left(PYx, t\sqrt{N}\right) \le \left[C_3 \left(\frac{t}{\sqrt{\delta}} + \frac{1}{\delta \sqrt{n}}\right)\right]^N
\end{equation}
for some constant $C_3>0$. Now we can take a union bound over all $x$ in an $\epsilon$-net of $Incomp_n(\delta,\rho)$ and complete the proof by approximation.

We first assume that $t\ge 1/\sqrt{\delta n}$. Then the $t/\sqrt{\delta}$ term in (\ref{small_PY}) dominates and we obtain that 
$$\mathbb P\left(\|(PY-A)x\|_2 \le t \sqrt{N} \right)\le \left(2C_3{t}/{\sqrt{\delta}}\right)^N.$$ 
By Lemma \ref{card_nets}, there exists an $\epsilon$-net $\mathcal N$ in $Incomp_n(\delta,\rho)$ of cardinality $|\mathcal N| \le 2n (5/\epsilon)^{n-1}$. Taking the union bound, we get
\begin{equation} \label{union2}
\mathbb P\left( \inf_{x\in \mathcal N} \|(PY-A)x\|_2 \le t\sqrt{N}\right) \le 2n\left(\frac{2C_3{t}}{\sqrt{\delta}}\right)^N \left(\frac{5}{\epsilon}\right)^{n-1}. 
\end{equation}
Let $V$ be the event that $\|(PY-A)y\|_2 \le t\sqrt{N}/2$ for some $y\in Incomp_n(\delta,\rho)$. Assume that $V$ occurs and choose a point $x\in \mathcal N$ such that $\|x-y\|_2 \le \epsilon$. Then if $\epsilon \le t/(2C_1)$, we have
$$\|(PY - A)x\|_2 \le \|(PY-A)y\|_2 + \|PY-A\|\|x-y\|_2 \le \frac{1}{2}t\sqrt{N} + C_1\epsilon \sqrt{N} \le t\sqrt{N},$$
where we used that $\|PY-A\|\le C_1 \sqrt{N}$. Fix one such $\epsilon$, using (\ref{union2}) we obtain that 
\begin{align}
& \mathbb P\left( \inf_{x\in Incomp_n(\delta,\rho)} \|(PY-A)x\|_2 \le \frac{t}{2} \sqrt{N}\right) = \mathbb P(V) \nonumber\\
& \le 2n\left(\frac{2C_3t}{\sqrt{\delta}}\right)^N \left(\frac{10C_1}{t}\right)^{n-1} \le \left[\left({C_4}{\delta^{-1/2}}\right)^{1/(1-\lambda_0)} t\right]^{N-n+1}, \label{tall_union} 
\end{align}
where in the last step we used $n/N\le \lambda_0$. If $t\le 1/\sqrt{\delta n}$, we use (\ref{tall_union}) to get
\begin{align*}
\mathbb P\left( \inf_{x\in Incomp_n(\delta,\rho)} \|(PY-A)x\|_2 \le \frac{t}{2} \sqrt{N}\right) \le \left[\left({C_4}{\delta^{-1/2}}\right)^{1/(1-\lambda_0)} (\delta n)^{-1/2}\right]^{N-n+1} \le e^{-c_0 N/K^4},
\end{align*}
if $K\le N^\omega$ for some sufficiently small $\omega$. Together with (\ref{tall_union}) and Lemma \ref{comp_bound}, this concludes the proof of Theorem \ref{main_sub} for the $\lambda\le \lambda_0$ case. 

\section{Almost square matrices}\label{sec_square} 

In this section, we deal with the probability in (\ref{incomp_bound}) for the $\lambda_0 < \lambda \le 1$ case. In particular, when $\lambda\to 1$, $PY-A$ becomes an almost square matrix and (\ref{tall_union}) cannot provide a satisfactory probability bound. For instance, for the square case with $N=n$, it is easy to see that the $(C\delta^{-1/2})^N$ term dominates over the $t$ term. To handle this difficulty, we will use the method in \cite{RudVersh_rect}, which reduces the problem of bounding $\|(PY-A)x\|_2$ for $x\in Incomp_n(\delta,\rho)$ to a random distance problem. We denote $N=n - 1 + d$ for some $d\ge 1$. Note that $\sqrt{N} - \sqrt{n-1} \le d/\sqrt{n}$. Hence to bound (\ref{incomp_bound}), it suffices to bound
\begin{equation}
\mathbb P\left( \inf_{x\in Incomp_n(\delta,\rho)} \|(PY-A)x\|_2 \le \epsilon\frac{d}{\sqrt{n}} \right), \ \ \text{for } \delta = c_1 N/\left(n K^{4}\log K\right), \ \rho \le 1/(16C_1).\label{def_deltarho}
\end{equation}

We denote
\begin{equation}\label{def_m}
m: = \min\left\{d, \left\lfloor \frac{1}{2}\rho^2\delta n \right\rfloor\right\}.
\end{equation}
Let $Z_1:=PY_1 - A_1,\ldots, Z_n:=PY_n - A_n$ be the columns of the matrix $Z:=PY-A$. Given a subset $J\subset \{1,\ldots, n\}$ of cardinality $m$, we define the subspace
\begin{equation}\label{def_HJc}
H_{J^c}:= \text{span}(Z_k)_{k\in J^c} \subseteq \mathbb R^N.
\end{equation}
For levels $K_1:= \rho\sqrt{\delta/2}$ and $K_2:=K_1^{-1}$, we define the set of totally spread vectors
\begin{equation}\label{total_spread}
S^J :=\left\{ y\in S^{n-1} \cap \mathbb R^J: \frac{K_1}{\sqrt{m}} \le |y_k| \le \frac{K_2}{\sqrt{m}} \text{ for all }k\in J\right\}.
\end{equation}
In the following lemma, we let $J$ be a random subset uniformly chosen over all subsets of $\{1,\ldots, n\}$ of cardinality $m$. We shall write $P_J$ for $P_{\mathbb R^J}$, the orthogonal projection onto the subspace $\mathbb R^J$. We denote the probability and expectation over the random subset $J$ by $\mathbb P_J$ and $\mathbb E_J$. 

\begin{lem}\label{lemma_probEx}
There exists constant $c_2>0$ depending only on $\rho$ such that for every $x\in Incomp_n(\delta,\rho)$, the event 
$$\mathcal E(x):=\left\{ \frac{P_J x}{\|P_J x\|_2} \in S^J \text{ and } \frac{\rho\sqrt{m}}{\sqrt{2n}} \le \|P_J x\|_2 \le \frac{\sqrt{m}}{\sqrt{\delta n}} \right\}$$
satisfies $\mathbb P_J(\mathcal E(x))\ge (c_2\delta)^m.$
\end{lem}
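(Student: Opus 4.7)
The plan is to show that the random event $\{J \subseteq \sigma(x)\}$ is contained in $\mathcal E(x)$, where $\sigma(x)$ is the large set of coordinates on which $x$ is well-spread, supplied by Lemma \ref{lem_spread}. After this containment is established, a short combinatorial count on a uniform random $m$-subset of $\{1,\dots,n\}$ yields the claimed probability.

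First, apply Lemma \ref{lem_spread} to the incompressible vector $x$ to extract $\sigma \subseteq \{1,\dots,n\}$ with $|\sigma| \ge \tfrac{1}{2}\rho^2\delta n$ and $\rho/\sqrt{2n} \le |x_k| \le 1/\sqrt{\delta n}$ for every $k \in \sigma$. By the definition $m = \min\{d, \lfloor \rho^2\delta n/2\rfloor\}$ in \eqref{def_m}, we have $m \le |\sigma|$, so the inclusion $J \subseteq \sigma$ is a non-trivial event. Assuming this inclusion, the same pointwise bounds hold for all $k \in J$, and summing $x_k^2$ over $J$ gives
\[
 \frac{m\rho^2}{2n} \;\le\; \|P_J x\|_2^2 \;\le\; \frac{m}{\delta n},
\]
which is exactly the norm condition in $\mathcal E(x)$. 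Dividing the pointwise bound $|x_k| \in [\rho/\sqrt{2n},\, 1/\sqrt{\delta n}]$ by the matching bounds on $\|P_J x\|_2$ yields
\[
 \frac{|x_k|}{\|P_J x\|_2} \;\in\; \left[\frac{\rho\sqrt{\delta/2}}{\sqrt{m}},\; \frac{\sqrt{2/\delta}/\rho}{\sqrt{m}}\right] = \left[\frac{K_1}{\sqrt{m}},\, \frac{K_2}{\sqrt{m}}\right],
\]
so $P_J x/\|P_J x\|_2 \in S^J$. Thus $\{J \subseteq \sigma\} \subseteq \mathcal E(x)$.

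Second, estimate the probability of this deterministic event. Since $J$ is uniform on $m$-subsets of $\{1,\dots,n\}$,
\[
 \mathbb P_J(J \subseteq \sigma) \;=\; \frac{\binom{|\sigma|}{m}}{\binom{n}{m}}.
\]
Using the elementary inequalities $\binom{|\sigma|}{m} \ge (|\sigma|/m)^m$ and $\binom{n}{m} \le (en/m)^m$ together with $|\sigma| \ge \rho^2\delta n/2$, this ratio is bounded below by $\bigl(\rho^2\delta/(2e)\bigr)^m = (c_2\delta)^m$ with $c_2 := \rho^2/(2e)$, which depends only on $\rho$.

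I do not anticipate a real obstacle here: Lemma \ref{lem_spread} is tailored precisely so that the restriction of $x$ to any subset of $\sigma$ becomes a totally spread vector in the sense of \eqref{total_spread}. The only point requiring care is verifying $m \le |\sigma|$, but this is built into the definition \eqref{def_m}, and the combinatorial ratio of binomial coefficients is handled by the standard two-sided bounds above.
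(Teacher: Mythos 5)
Your proof is correct and takes essentially the same route as the paper: show $\{J \subseteq \sigma(x)\}$ implies $\mathcal E(x)$ using Lemma \ref{lem_spread}, then lower-bound $\mathbb P_J(J\subseteq\sigma)$ by the ratio of binomials. The only difference is that the paper splits the combinatorial estimate into the two cases $d \le \tfrac{1}{4}\rho^2\delta n$ and $d > \tfrac{1}{4}\rho^2\delta n$ and invokes Stirling, whereas you handle both uniformly via the elementary bounds $\binom{|\sigma|}{m} \ge (|\sigma|/m)^m$ and $\binom{n}{m} \le (en/m)^m$ — a mild streamlining, not a different method.
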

\begin{proof}
Let $\sigma \subset \{1, \ldots, n\}$ be the subset from Lemma \ref{lem_spread}. Then we have
$$\mathbb P_J \left( J \subset \sigma \right) = \begin{pmatrix} |\sigma|\\
m
\end{pmatrix}/ \begin{pmatrix} n \\
m
\end{pmatrix}.$$
Using Stirling's approximation, for $d\le \frac{1}{4}\rho^2 \delta n$, we have
$$\mathbb P_J \left( J \subset \sigma \right) \ge \left(\frac{c|\sigma|}{n} \right)^m \ge \left( c_2 \delta \right)^m,$$
and for $d> \frac{1}{4}\rho^2 \delta n $, we have
$$\mathbb P_J \left( J \subset \sigma \right) \ge \begin{pmatrix} n \\
m
\end{pmatrix}^{-1} \ge \frac{m!}{n^m} \ge \left( \frac{cm}{n} \right)^m \ge \left( c_2 \delta \right)^m.$$
If $J\subset \sigma$, then summing (\ref{eq_spread}) over $k\in J$, we obtain the required two-sided bound for $\|P_J x\|_2$. This and (\ref{eq_spread}) yield ${P_J x}/{\|P_J x\|_2} \in S^J $. Hence $\mathcal E(x)$ holds.
\end{proof}

Lemma \ref{lemma_probEx} implies the following lemma, whose proof is similar to the one for \cite[Lemma 6.2]{RudVersh_rect}.
\begin{lem}\label{incomp_red_dist}
Let $J$ denote the $m$-element subsets of $\{1,\ldots,n\}$. Then for every $\epsilon>0$,
\begin{equation}
\mathbb P\left( \inf_{x\in Incomp_n(\delta,\rho)} \|Zx\|_2 < \epsilon\rho \sqrt{\frac{m}{2n}}\right) \le \left( c_2 \delta\right)^{-m} \max_{J} \mathbb P\left( \inf_{x\in S^J} {\rm{dist}}(Zx,H_{J^c}) < \epsilon\right).
\end{equation}
\end{lem}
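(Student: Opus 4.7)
The plan is to combine the randomization-of-coordinates trick given by Lemma \ref{lemma_probEx} with the geometric observation that $\|Zx\|_2$ dominates the distance from $Z(P_J x)$ to the span of the other columns. Concretely, for any $x \in \mathbb R^n$ and any subset $J \subseteq \{1,\ldots,n\}$, write $x = P_J x + P_{J^c} x$. Since $Z P_{J^c}x$ is a linear combination of the columns $(Z_k)_{k\in J^c}$, it lies in $H_{J^c}$, so
\[
\|Zx\|_2 \;=\; \|Z(P_J x) + Z(P_{J^c}x)\|_2 \;\ge\; \mathrm{dist}\bigl(Z(P_J x),\, H_{J^c}\bigr) \;=\; \|P_J x\|_2 \cdot \mathrm{dist}\!\left(Z\hat y,\, H_{J^c}\right),
\]
where $\hat y := P_J x / \|P_J x\|_2$ whenever $P_J x \neq 0$.

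Next I would fix $x \in Incomp_n(\delta,\rho)$ and invoke Lemma \ref{lemma_probEx}: over a uniformly random $m$-subset $J$, the event
\[
\mathcal E(x) \;=\; \left\{\, \hat y \in S^J,\ \ \|P_J x\|_2 \ge \rho\sqrt{m}/\sqrt{2n}\,\right\}
\]
has $\mathbb P_J(\mathcal E(x)) \ge (c_2\delta)^m$. On $\mathcal E(x)$ the displayed inequality above gives
\[
\|Zx\|_2 \;\ge\; \rho\sqrt{\tfrac{m}{2n}}\cdot \inf_{y\in S^J} \mathrm{dist}(Zy, H_{J^c}).
\]
In particular, if $\|Zx\|_2 < \epsilon\rho\sqrt{m/(2n)}$ \emph{and} $\mathcal E(x)$ holds, then the event $\mathcal F_J := \{\inf_{y\in S^J}\mathrm{dist}(Zy,H_{J^c}) < \epsilon\}$ occurs.

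The third step is a standard Fubini/averaging argument. Let $\mathcal F$ denote the event on the left-hand side of the lemma. On $\mathcal F$, measurably select an $x=x(Y)\in Incomp_n(\delta,\rho)$ witnessing the inequality (the infimum is attained up to arbitrarily small loss, so this is a routine measurable selection). Since $x(Y)$ is a function of $Y$ only and $J$ is independent of $Y$, Lemma \ref{lemma_probEx} still gives $\mathbb P_J(\mathcal E(x(Y))\mid Y)\ge (c_2\delta)^m$ on $\mathcal F$. Therefore
\[
(c_2\delta)^m\, \mathbb P(\mathcal F) \;\le\; \mathbb E\bigl[\mathbf 1_{\mathcal F}\,\mathbb P_J(\mathcal E(x(Y))\mid Y)\bigr] \;=\; \mathbb E_J\,\mathbb E_Y\bigl[\mathbf 1_{\mathcal F\cap \mathcal E(x(Y))}\bigr] \;\le\; \mathbb E_J\bigl[\mathbb P(\mathcal F_J)\bigr] \;\le\; \max_J \mathbb P(\mathcal F_J),
\]
which rearranges to the stated bound.

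The only genuinely delicate point is the selection of a measurable witness $x(Y)$ together with the correct bookkeeping of the two independent sources of randomness $Y$ and $J$; once one is careful that $x(Y)$ does not depend on $J$, Lemma \ref{lemma_probEx} applies pointwise in $Y$ and the Fubini step is automatic. Everything else amounts to the elementary projection inequality $\|Zx\|_2\ge \mathrm{dist}(Z P_J x, H_{J^c})$ and the two-sided control on $\|P_J x\|_2$ supplied by Lemma \ref{lemma_probEx}.
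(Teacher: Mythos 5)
Your proof is correct and follows the same route as the reference the paper cites (Lemma 6.2 of Rudelson--Vershynin's rectangular paper): the projection inequality $\|Zx\|_2\ge\mathrm{dist}(ZP_Jx,H_{J^c})=\|P_Jx\|_2\,\mathrm{dist}(Z\hat y,H_{J^c})$, the lower bound $\mathbb P_J(\mathcal E(x))\ge(c_2\delta)^m$ from Lemma \ref{lemma_probEx}, and the Fubini/averaging step over the independent random index set $J$. The measurable-selection point you flag is the only genuinely delicate detail, and you handle it correctly by noting that $x(Y)$ is chosen as a function of $Y$ alone, so Lemma \ref{lemma_probEx} applies pointwise in $Y$ before averaging over $J$.
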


It remains to bound $\mathbb P\left( \inf_{x\in S^J} {\rm{dist}}(Zx,H_{J^c}) < \epsilon\right)$ for any $m$-element subset $J$. 
We shall need the following lemma to bound below the distance between a random vector in $\mathbb R^N$ and an independent random subspace of codimension $l$. It will be proved in Section \ref{section_distance}. 


\begin{lem}[Distance to a random subspace]\label{dist_lemm}
Let $J$ be any $m$-element subset of $\{1,\ldots,n\}$ and let $H_{J^c}$ be the random subspace of $\mathbb R^N$ defined in (\ref{def_HJc}). Let $X$ be a random vector in $\mathbb R^M$ whose coordinates are i.i.d. centered random variables with unit variance and finite fourth moments, independent of $H_{J^c}$. Assume that $l:=m+d-1 \le \beta N$. Then for every $\epsilon>0$, we have
\begin{equation}\label{dist_lemm_eq2}
\mathbb P\left( \sup_{v\in \mathbb R^N}\mathbb P \left( \left. {\rm{dist}}(PX-v,H_{J^c})< \epsilon \sqrt{l} \, \right | H_{J^c} \right) > (\tilde C\epsilon)^{l} + e^{-\tilde cN} \right) \le e^{-\tilde cN},
\end{equation}
where $\beta,\tilde c, \tilde C>0$ depend only on $\Lambda$, $C_1$ and the maximal fourth moment. 
\end{lem}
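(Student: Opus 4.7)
My plan is to condition on $H_{J^c}$, reduce the distance to an $l$-dimensional small-ball problem for a linear image of $X$, and then combine a bulk-scale concentration bound with a Hal\'asz / LCD-type Littlewood--Offord estimate at the small scale. Writing $W := H_{J^c}^\perp$ in $\mathbb{R}^N$ (which has dimension at least $N - (n-m) = l$ since $H_{J^c}$ is spanned by $n-m$ vectors), I pick any $l$-dimensional $W_0 \subseteq W$ with orthonormal basis $y_1,\ldots,y_l$ and set $w_i := P^T y_i \in \mathbb{R}^M$, $b_i := \langle v, y_i\rangle$. Since $PP^T = I$, the $w_i$ are orthonormal in $\mathbb{R}^M$, and hence
$$\mathrm{dist}(PX-v,H_{J^c})^2 \;\geq\; \|P_{W_0}(PX-v)\|_2^2 \;=\; \sum_{i=1}^l |\langle X, w_i\rangle - b_i|^2 \;=\; \|UX - b\|_2^2,$$
where the $l\times M$ matrix $U$ with rows $w_i$ satisfies $UU^T = I$, so $\|U\| = 1$ and $\|U\|_{HS}^2 = l$.

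At the bulk scale, Lemma \ref{lem_smallball} (in the subgaussian regime used when applying this lemma to prove Theorem \ref{main_sub}) yields
$$\sup_b \mathbb P\bigl(\|UX-b\|_2 \leq \tfrac12\sqrt l\bigr) \leq 2\exp(-cl/K^4),$$
which is absorbed into the $e^{-\tilde c N}$ term of the conclusion. At the small scale, I would write $UX - b = \sum_{j=1}^M X_j U^{(j)} - b$, with $U^{(j)} \in \mathbb{R}^l$ the columns of $U$, and invoke a Hal\'asz / Rudelson--Vershynin small-ball bound to obtain $\mathcal L(UX - b,\epsilon\sqrt l) \leq (C\epsilon)^l + e^{-\tilde cN}$, under a large essential-LCD assumption on $\{U^{(j)}\}$. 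Direct tensorization via Lemma \ref{lem_tensor} is insufficient here because the coordinates of $UX$ are merely orthogonal rather than independent, so the $l$-th power in the polynomial term must come from multivariate Fourier/Esseen estimates applied to the column system, not from a per-coordinate Paley--Zygmund type estimate (Lemma \ref{lem_Paley}) combined with tensorization.

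The main obstacle is to verify the LCD hypothesis on $U$ with probability at least $1 - e^{-\tilde cN}$ over $H_{J^c}$, uniformly over the choice of orthonormal basis of some $l$-dimensional $W_0 \subseteq W$. Since $H_{J^c}$ is the span of independent random vectors $PY_k - A_k$ for $k\in J^c$, one expects $W$ to be generically oriented in $\mathbb R^N$: no unit $y\in W$ should be compressible, and more strongly the column system of any orthonormal basis of $W_0$ should have large LCD. A net argument on $S(W)$ applying Lemma \ref{small_CLT} to $\langle X, P^T y\rangle$ (noting $\|P^T y\|_2 = \|y\|_2 = 1$), together with the a priori bound $\|Y\|\le C_1\sqrt N$, should rule out compressible directions in $W$ with failure probability at most $e^{-\tilde cN}$; promoting this to a quantitative LCD bound is the genuinely delicate part, and I expect the argument to follow the structure of Sections~6--7 of \cite{RudVersh_rect}, crucially using that $P^T$ is an $\ell^2$-isometry from $\mathbb R^N$ into $\mathbb R^M$ so that spread and LCD properties of $y\in W$ transfer to $w = P^T y$ without loss, thereby enabling the arithmetic invertibility machinery to be threaded through the deformation $P$ at each step.
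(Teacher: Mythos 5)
Your overall plan matches the paper's: condition on $H_{J^c}$, use the isometry $P^T$ to transport an orthonormal basis of $H_{J^c}^\perp \subset \mathbb{R}^N$ to an orthonormal system $\{a_k = P_E e_k\}_{k\le M}$ in $\mathbb{R}^M$ (with $E := P^T H_{J^c}^\perp$), rewrite the distance as $\|\sum_k a_k \xi_k - w\|_2$, apply a Littlewood--Offord estimate whose polynomial bound depends on $\mathrm{LCD}(E)$, and then show that $\mathrm{LCD}(E)$ is exponentially large with probability $1 - e^{-\tilde c N}$. This is exactly the structure of Section~6 of the paper. (One harmless note: since the entries of $Y$ are assumed to have a.s.\ continuous distribution during the proof of the uniform distance bound, $\dim H_{J^c}^\perp = l$ exactly, so your $W_0$ is all of $W$ and your inequality $\mathrm{dist}(\cdot)\ge\|UX-b\|_2$ is an equality.)

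There are two substantive points to flag. First, you handle the two scales with two separate tools: Lemma~\ref{lem_smallball} (Hanson--Wright, requiring subgaussian entries) for the bulk scale, and an unspecified Hal\'asz/LCD bound for the small scale. But the lemma you are proving assumes only finite fourth moments, so invoking Lemma~\ref{lem_smallball} is out of scope --- you would be proving a weaker statement than the one claimed. The paper instead invokes a single result, Theorem~3.3 of \cite{RudVersh_rect} (stated here as Theorem~\ref{thm_smallball}), which gives \emph{simultaneously} the polynomial term $(C\epsilon/\gamma)^l$, the LCD term $(C\sqrt l/\gamma\,\mathrm{LCD})^l$, and the exponential ``bulk'' term $C^l e^{-2b\alpha^2}$, under the mild hypothesis $\mathcal L(\xi_k,1)\le 1-b$, which is supplied by Lemma~\ref{lem_Paley} from the fourth-moment assumption alone. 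You are right that per-coordinate tensorization cannot produce the $l$-th power, but the fix is this multivariate Ess\'een/LCD theorem, not a Hanson--Wright hybrid.

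Second --- and this is where the proposal remains a sketch rather than a proof --- you defer the LCD lower bound for $E$ to a ``genuinely delicate part'' without carrying it out, and you underestimate one concrete obstruction. The paper's proof of Theorem~\ref{structure_thm} goes: rule out compressible vectors in $E$ via $\inf_{x\in Comp}\|(\tilde Y^T-\tilde B)x\|_2 > 0$ using Lemma~\ref{lem_Paley} row-by-row plus tensorization (not Lemma~\ref{small_CLT} as you suggest); decompose incompressible vectors into level sets $S_D$ by LCD; prove a single-vector small-ball bound on $S_D$; and then take a union bound over a net of $S_D$. The nontrivial new wrinkle relative to \cite{RudVersh_rect} is precisely that this net must live inside the $N$-dimensional subspace $P^T\mathbb{R}^N\subset\mathbb{R}^M$, so the integer-point count must be restricted to that slice. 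The paper handles this with a geometric-functional-analytic volume bound, $|S\cap Q_M|\le(\sqrt 2)^{k}$ for a codimension-$k$ subspace $S$ (Lemma~\ref{ball}), and a Fubini argument, producing a net of size $(CD/\sqrt N)^N$. Your proposal says only that ``spread and LCD properties of $y$ transfer to $w = P^T y$ without loss''; that much is true, but it does not by itself control the cardinality of the net, which is the step that actually differs from the undeformed case.
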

 
It is easy to see that (\ref{dist_lemm_eq2}) implies the weaker result:
\begin{equation}\label{dist_lemm_eq}
\sup_{v\in \mathbb R^N}\mathbb P \left( {\rm{dist}}(PX-v,H_{J^c})< \epsilon \sqrt{l}\right) \le (\tilde C\epsilon)^{l} + 2e^{-\tilde cN}.
\end{equation}
In the following proof, we choose $\lambda_0$ such that
\begin{equation}\label{lambda0}
d \le \beta N/2 \Rightarrow l \le 2d \le \beta N.
\end{equation}
Note that for any fixed $x\in S^J$, we have $Zx= PYx - Ax$, where $Yx$ is a random vector satisfying the assumptions for $X$ in Lemma \ref{dist_lemm}. 
So (\ref{dist_lemm_eq}) gives a useful probability bound for a single $x\in S^J$. Then we will try to take a union bound over all $x$ in an $\epsilon$-net
of $S^J$ and obtain a uniform distance bound. This is stated in the following theorem. 

\begin{thm}[Uniform distance bound]\label{uni_dist_bound}
Let $Y$ be a random matrix satisfying the assumptions in Theorem \ref{main_sub}. Then for every $m$-element subset $J$ and $t>0$, 
\begin{equation}\label{uni_dist_eq}
\mathbb P \left( \inf_{x\in S^J} {\rm{dist}}\left(Zx,H_{J^c}\right) < t\sqrt{d} \right) \le (\bar Ct K^5 \log K)^d + e^{-\bar cN},
\end{equation}
where $\bar C,\bar c>0$ depend only on $\tilde C$ and $\tilde c$.
\end{thm}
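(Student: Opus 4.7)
The plan is a standard $\epsilon$-net plus union bound argument, with Lemma \ref{dist_lemm} supplying the pointwise input. I would fix the $m$-element subset $J$ throughout.

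\emph{Pointwise conditional bound.} For each fixed $x \in S^J \subset S^{n-1}$, the random vector $Yx$ has independent coordinates with mean zero, variance $\|x\|_2^2 = 1$, and fourth moments bounded uniformly in $x$ (using the subgaussianity $\|Y_{ij}\|_{\psi_2} \le K$ together with the assumed finite fourth moments). Applying the strong form (\ref{dist_lemm_eq2}) of Lemma \ref{dist_lemm} therefore yields a good event $\mathcal E$ on $H_{J^c}$, with $\mathbb P(\mathcal E^c) \le e^{-\tilde c N}$, on which
$$\sup_{v} \mathbb P\bigl(\mathrm{dist}(PYx - v, H_{J^c}) < \epsilon\sqrt l \,\bigm|\, H_{J^c}\bigr) \le (\tilde C \epsilon)^l + e^{-\tilde c N}.$$
The key point is that since the good event is a structural statement about $H_{J^c}$ alone and the moment bounds on $Yx$ hold uniformly for $x \in S^J$, one can choose $\mathcal E$ independent of $x$ so that the bound holds simultaneously for every $x \in S^J$.

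\emph{Net and union bound.} Because $S^J$ lives in the $m$-dimensional subspace $\mathbb R^J$, Lemma \ref{card_nets}(2) furnishes an $\epsilon_1$-net $\mathcal N \subset S^J$ with $|\mathcal N| \le 2m(1 + 4/\epsilon_1)^{m-1}$. Choosing $\epsilon_1 \asymp t\sqrt d/\sqrt N$ makes the approximation error $\|Z\|\epsilon_1 \le C_1\sqrt N\,\epsilon_1 \lesssim t\sqrt d$ absorbable, so it suffices to control $\inf_{x \in \mathcal N}\mathrm{dist}(Zx,H_{J^c})$. Applying the pointwise bound with $\epsilon = t\sqrt{d/l}$, the union bound on $\mathcal E$, and adding the cost of $\mathcal E^c$ gives
$$\mathbb P\Bigl(\inf_{x \in S^J}\mathrm{dist}(Zx, H_{J^c}) < \tfrac{1}{2}t\sqrt d\Bigr) \le |\mathcal N|\bigl[(\tilde C t\sqrt{d/l})^l + e^{-\tilde c N}\bigr] + e^{-\tilde c N}.$$

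\emph{Main obstacle: the final bookkeeping.} Using $l - m + 1 = d$ and $\epsilon_1^{-1} \asymp C_1\sqrt{N/d}/t$, the principal term simplifies to
$$|\mathcal N|\,(\tilde C t)^l (d/l)^{l/2} \;\lesssim\; m\,\tilde C^l\,C_1^{m-1}\,(N/d)^{(m-1)/2}\,(d/l)^{l/2}\,t^d.$$
The power $t^d$ is already correct; the real work is absorbing the $t$-independent factor into $(\bar C K^5\log K)^d$. This will require: (i) tracking the polynomial $K$-dependence of $\tilde C$ in Lemma \ref{dist_lemm} arising from the fourth-moment bound $\lesssim K^4$; (ii) using the relation $\delta \asymp 1/(K^4\log K)$ from (\ref{def_deltarho}) to control $(\sqrt{N/d})^{m-1}$; and (iii) splitting into the two regimes in the definition (\ref{def_m}) of $m$. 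In Case 1 ($m = d$, $l = 2d-1$), the factor $(d/l)^{l/2} \approx 2^{-d}$ helps compensate. In Case 2 ($m \asymp N/(K^4\log K) < d$), one has $N/d \le N/m \lesssim K^4\log K$, so $(N/d)^{(m-1)/2} \le (K^2\sqrt{\log K})^m \le (K^2\sqrt{\log K})^d$, which fits comfortably inside $(K^5\log K)^d$. Finally, the tails $|\mathcal N|e^{-\tilde c N} + e^{-\tilde c N}$ collapse into $e^{-\bar c N}$ once $|\mathcal N| \le e^{\tilde c N/2}$, which is secured by $K \le N^\omega$ with $\omega$ sufficiently small.
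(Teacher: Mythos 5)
Your pointwise step and the use of the strong form~(\ref{dist_lemm_eq2}) to get a good event on $H_{J^c}$ alone are fine. The gap is in the net step: you use the crude Lipschitz bound $\|Z\|\le C_1\sqrt{N}$, which forces the net scale $\epsilon_1 \asymp t\sqrt{d}/\sqrt{N}$ and hence $|\mathcal N|\asymp (\sqrt{N}/(t\sqrt{d}))^{m-1}$. As your own bookkeeping shows, this leaves a $t$-independent factor $(N/d)^{(m-1)/2}(d/l)^{l/2}$. Your Case~2 analysis is correct, but in Case~1 (where $m=d$, $l=2d-1$) the factor $(d/l)^{l/2}\approx 2^{-d}$ does \emph{not} compensate $(N/d)^{(d-1)/2}$: for instance with $K=O(1)$ and $d$ anywhere in the intermediate range $2\le d\ll N/(K^4\log K)$, one has $(N/d)^{(d-1)/2}2^{-d}\gg (\bar C K^5\log K)^d$ (e.g.\ $d=10$ gives a factor growing like $N^{4.5}$ while the target is a constant). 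This cannot be absorbed into $e^{-\bar c N}$ either, since the $t^d$ must remain.

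What is missing is precisely the content of the paper's Proposition~\ref{opbound_W} and Lemma~\ref{lem_decouple}. The paper works with $W:=P_{H^\perp}PY|_{\mathbb R^J}$, for which ${\rm dist}(Zx,H_{J^c})=\|Wx-w\|_2$, and shows (Proposition~\ref{opbound_W}) that $\|W\|\lesssim K\sqrt{d}$ with probability $1-e^{-c_0 s^2 d}$. This allows a much finer net, $\epsilon_1=t/(C_0K)$, so $|\mathcal N|\asymp (K/t)^{m-1}$ and the $t$-independent factor becomes $K^{m-1}\le K^d$ — exactly the needed scale. The catch is that the failure probability $e^{-c_0 C_0^2 d}$ of the operator bound is not small enough when $d$ is small, so the paper cannot simply condition on $\{\|W\|\le C_0K\sqrt{d}\}$ and add the exceptional probability. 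Instead, it decouples the event $\{\|Wx-w\|_2 <t\sqrt{d}\}$ from $\{\|W\|>sC_0K\sqrt{d}\}$ via Lemma~\ref{lem_decouple} (valid because vectors in $S^J$ are totally spread, satisfying $|z_k|\ge K_1/\sqrt{m}$), proves the refined estimate of Lemma~\ref{approx_refine} on the dyadic shells $sC_0K\sqrt{d}<\|W\|\le 2sC_0K\sqrt{d}$, and sums over $s=2^k$. Without this decoupling step your argument does not yield~(\ref{uni_dist_eq}).
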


By the definition of $m$ in (\ref{def_m}), we have 
\begin{equation*}
\left( c_2 \delta\right)^{-m} \le \left[\left( c_2 \delta\right)^{-\rho^2\delta/2}\right]^{n} \le e^{\bar cN/2},
\end{equation*}
with an appropriate choice of $\rho$. Then we conclude from Lemma \ref{incomp_red_dist} and Theorem \ref{uni_dist_bound} that
\begin{align}
\mathbb P\left( \inf_{x\in Incomp_n(\delta,\rho)} \|Zx\|_2 < \epsilon \rho \sqrt{\frac{md}{2n}} \right) \le &(c_2 \delta )^{-m} (\bar C\epsilon K^5\log K )^d + e^{-\bar cN/2} \nonumber\\
\le & \left(CK^9(\log K)^2 \epsilon \right)^d + e^{-\bar cN/2}, \label{coro_of_dist}
\end{align}
where we used $m\le d$ and $\delta$ in (\ref{def_deltarho}). Changing $\epsilon$ to $\epsilon\rho^{-1}\sqrt{{2d}/{m}}$ in (\ref{coro_of_dist}) and using $d/m \le CK^4 \log K$, we get
\begin{align*}
\mathbb P\left( \inf_{x\in Incomp_n(\delta,\rho)} \|(PY-A)x\|_2 < \epsilon \frac{d}{\sqrt{n}} \right) \le  \left(CK^{11}(\log K)^{5/2} \epsilon \right)^d + e^{-\bar cN/2} ,
\end{align*}
which, together with Lemma \ref{comp_bound}, concludes the proof of Theorem \ref{main_sub}.

Now we begin the proof of Theorem \ref{uni_dist_bound}. Without loss of generality, we can assume that the entries of $Y$ have absolute continuous distributions. In fact we can add to each entry an independent Gaussian random variable with small variance $\sigma$, and later let $\sigma \to 0$ (all the estimates below do not depend on $\sigma$). Under this assumption, we have the following convenient fact:
\begin{equation}\label{reduction_dim}
\text{dim}(H_{J^c}) = n-m \ \ \text{a.s.}
\end{equation}
{


Let $P_{H^\perp}$ be the orthogonal projection in $\mathbb R^N$ onto $H^\perp_{J^c}$, and define
\begin{equation}\label{def_rm_W}
W:= \left.P_{H^\perp}PY\right|_{\mathbb R^J}.
\end{equation}
Then for every $x\in \mathbb R^n$, we have 
\begin{equation}\label{def_rm_w}
\text{dist}(PYx - v ,H_{J^c}) = \left\|Wx - w \right\|_2, \ \ \text{where } w= P_{H^\perp}v.
\end{equation}
By (\ref{reduction_dim}), $\text{dim}(H^\perp_{J^c})=N-n+m = l$ almost surely. Thus $W$ acts as an operator from an $m$-dimensional subspace into an $l$-dimensional subspace. If we have a proper operator bound for $W$, we can run the approximation argument on $S^J$ and prove a uniform distance bound over all $x\in S^J$.

\begin{prop}\label{opbound_W}
Let $W$ be a random matrix as in (\ref{def_rm_W}). Then 
$$\mathbb P \left( \left. \|W\| > sK\sqrt{d}\, \right| H_{J^c}\right) \le e^{- c_0 s^2 d}, \ \ \text{ for } s\ge C_0,$$
where $C_0,c_0>0$ are absolute constants.
\end{prop}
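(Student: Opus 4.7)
The plan is to condition on the subspace $H_{J^c}$ and bound $\|W\|$ by a standard $\epsilon$-net argument, exploiting the fact that after conditioning, $W$ is a linear function of a piece of $Y$ that is itself independent of the conditioning.

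First I would observe that $H_{J^c}$ depends only on the columns $(Y_k)_{k\in J^c}$ of $Y$, while $W$ is a linear function of the columns $(Y_k)_{k\in J}$. Since all entries of $Y$ are independent, the submatrix $Y_J := (Y_k)_{k\in J}$ is independent of $H_{J^c}$ and keeps its original law after conditioning: independent entries, mean zero, variance one, $\psi_2$-norm at most $K$. Next I would fix an orthonormal basis $u_1,\ldots,u_l$ of $H_{J^c}^\perp$, assemble it into an $N\times l$ matrix $U$, and represent $W$ in coordinates as the $l\times m$ matrix $Q Y_J$ with $Q:=U^T P$. From $P P^T=I_N$ and $U^T U=I_l$ one checks $Q Q^T=I_l$, so $Q$ has orthonormal rows and in particular $\|Q\|=1$.

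With $Q$ fixed, I would run the two-sided $\epsilon$-net argument on $S^{m-1}\times S^{l-1}$. For any unit vectors $x\in\mathbb R^J$ and $y\in\mathbb R^l$, setting $a:=Q^T y\in\mathbb R^M$ (which satisfies $\|a\|_2=\|y\|_2=1$),
\[
\langle W x,y\rangle=\sum_{k\in J}\sum_{j=1}^M a_j x_k (Y_J)_{jk}
\]
is a weighted sum of independent mean-zero subgaussians with coefficient vector of $\ell^2$-norm $\|a\|_2\|x\|_2=1$. Hence $\langle Wx,y\rangle$ is subgaussian with $\psi_2$-norm at most $CK$, giving $\mathbb P(|\langle Wx,y\rangle|\ge t)\le 2\exp(-c t^2/K^2)$. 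Choosing $(1/4)$-nets of $S^{m-1}$ and $S^{l-1}$ of sizes at most $9^m$ and $9^l$ respectively (Lemma \ref{card_nets}) and using the standard approximation $\|W\|\le 2\sup_{x,y\in\mathrm{net}}|\langle Wx,y\rangle|$, a union bound yields
\[
\mathbb P\bigl(\|W\|\ge sK\sqrt{d}\,\bigm|\,H_{J^c}\bigr)\le 2\cdot 9^{m+l}\exp\bigl(-c\, s^2 d\bigr).
\]
Since $l=m+d-1$ and $m\le d$ by \eqref{def_m}, one has $m+l\le 3d-1$, so for $s\ge C_0$ sufficiently large the exponential absorbs the $9^{m+l}$ prefactor and the claimed bound $e^{-c_0 s^2 d}$ follows.

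The only subtle point in the argument is the conditioning step, i.e.\ verifying that the orthogonal projection $P_{H^\perp}$ is measurable with respect to $\sigma((Y_k)_{k\in J^c})$ and therefore independent of $Y_J$; this is essentially automatic from the entrywise independence of $Y$. After that the proof reduces to a subgaussian tail bound combined with volumetric $\epsilon$-net counting, both routine. There is no serious obstacle beyond bookkeeping of the constants.
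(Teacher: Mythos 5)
Your argument is correct and is essentially the paper's own proof. Both proofs condition on a realization of $H_{J^c}$, observe that $W$ is a linear image of the entries $Y_J$ that are independent of the conditioning, compute that $\langle Wx,y\rangle=\langle Y_Jx,P^Ty\rangle$ is a mean-zero subgaussian with $\psi_2$-norm $O(K)$ whenever $x\in S^{m-1}$, $y\in S^{l-1}$, and then run a two-sided $\epsilon$-net/union bound using $m+l=O(d)$. The only cosmetic differences are that you use $(1/4)$-nets (sizes $9^m,9^l$) where the paper uses $(1/2)$-nets (sizes $5^m,5^l$), you make the coordinate representation $W=QY_J$ with $QQ^T=I_l$ explicit where the paper works with the operator directly, and you spell out the measurability/independence of $P_{H^\perp}$ with respect to $\sigma((Y_k)_{k\in J^c})$, which the paper treats as implicit.
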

\begin{proof}
For simplicity of notations, we fix a realization of $H_{J^c}$ and omit the conditioning on it from the expressions below. Let $\mathcal N$ be an $(1/2)$-net of $S^{n-1}\cap \mathbb R^J$ and $\mathcal M$ be an $(1/2)$-net of $S^{n-1}\cap H^\perp_{J^c}$. By Lemma \ref{card_nets}, we can choose $\mathcal N$ and $\mathcal M$ such that
$$|\mathcal N| \le 5^{m}, \ \ |\mathcal M| \le 5^{l}.$$
It is easy to prove that
\begin{equation}\label{bound_Wnorm}
\|W\| \le 4\sup_{x\in \mathcal N, y \in \mathcal M} \left| \langle Wx,y\rangle \right|.
\end{equation}
For every $x\in \mathcal N$ and $y\in \mathcal M$, $\langle Wx,y\rangle = \langle PY x, y\rangle = \langle Yx, P^Ty\rangle$ is a random variable with subgaussian moment bounded by $CK$ for some absolute constant $C>0$. Hence by (\ref{assm_sub}) we have
$$\mathbb P \left(\left| \langle Wx,y\rangle \right| > \frac{1}{4}sK\sqrt{d}\right) \le 2e^{-cs^2 d}.$$
Using (\ref{bound_Wnorm}) and taking the union bound, we get that for large enough $C_0$,
$$\mathbb P \left(\|W\|> sK\sqrt{d}\right) \le 5^{m} \cdot 5^{l} \cdot 2e^{-cs^2 d} \le e^{-c_0 s^2 d}, \ \ \text{for } s\ge C_0,$$
where we used that $m\le l \le 2d$.
\end{proof}

\begin{lem}\label{approx_weak}
Let $W$ be a random matrix as in (\ref{def_rm_W}) and let $w$ be a random vector as in (\ref{def_rm_w}). Then for every $t\ge 0$, we have
\begin{equation}\label{probab_E1}
\mathbb P \left( \inf_{x\in S^J} \|Wx - w\|_2 < t\sqrt{d} , \|W\|\le C_0 K\sqrt{d}\right) \le K^{m-1}(C_2 t)^d + 2e^{- \tilde c N/4},
\end{equation}
where $C_2$ depends only on $\tilde C$.
\end{lem}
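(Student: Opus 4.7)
The plan is an $\epsilon$-net argument on the totally-spread sphere $S^J$. For each fixed point, the small-ball probability is controlled by the distance lemma, and the uniform bound is obtained by approximation, with the operator norm control $\|W\|\le C_0K\sqrt d$ supplied by the conditioning event used to transfer the pointwise bound.

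The critical ingredient is to apply the sharper second half of Lemma \ref{card_nets}(2) to build an $\epsilon_0$-net $\mathcal N$ of $S^J\subset S^{n-1}\cap\mathbb R^J$: since $\dim\mathbb R^J=m$, one obtains $|\mathcal N|\le 2m(1+4/\epsilon_0)^{m-1}$, with exponent $m-1$ rather than $m$. This is precisely what will yield the $K^{m-1}$ prefactor in the target estimate. I would pick $\epsilon_0=t/(C_0K)$, so that on the event $\{\|W\|\le C_0K\sqrt d\}$ the approximation
\[
\|Wx-w\|_2\le \|Wy-w\|_2+\|W\|\,\|y-x\|_2
\]
turns the inequality $\|Wy-w\|_2<t\sqrt d$ for some $y\in S^J$ into the existence of a nearby $x\in\mathcal N$ with $\|Wx-w\|_2<2t\sqrt d$.

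For each fixed $x\in S^J$, the random vector $Yx$ has i.i.d.\ centered unit-variance coordinates and is independent of $H_{J^c}$ (because $x$ is supported on $J$ while $H_{J^c}$ is built from columns of $Y$ indexed by $J^c$); moreover its fourth moment is bounded independently of $K$ via $\mathbb E|(Yx)_i|^4\le M_4\sum_k x_k^4+3(\sum_k x_k^2)^2\le M_4+3$, so the constants $\tilde C,\tilde c$ from Lemma \ref{dist_lemm} do not deteriorate with $K$. Applying (\ref{dist_lemm_eq}) with $\epsilon=2t\sqrt{d/l}$ (so $\epsilon\sqrt l=2t\sqrt d$) gives
\[
\mathbb P\bigl(\|Wx-w\|_2<2t\sqrt d\bigr)=\mathbb P\bigl(\operatorname{dist}(PYx-v,H_{J^c})<2t\sqrt d\bigr)\le(2\tilde Ct\sqrt{d/l})^l+2e^{-\tilde cN}.
\]
A union bound over $\mathcal N$, combined with the key algebraic identity $l-(m-1)=d$, gives a main term
\[
2m(5C_0K)^{m-1}(2\tilde C)^{l}(d/l)^{l/2}t^d.
\]
Using $(d/l)^{l/2}\le 1$ and, crucially, $m\le d$ (valid by the definition (\ref{def_m}) of $m$), one absorbs the factor $(10C_0\tilde C)^{m-1}$ into a $d$-th power and merges it with $(2\tilde C)^d$ into $C_2^d$; the polynomial prefactor $2m\le e^{d+1}$ is absorbed similarly. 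This leaves $K^{m-1}(C_2t)^d$ with $C_2$ depending only on $\tilde C$. The residual error $2m(5C_0K/t)^{m-1}\cdot 2e^{-\tilde cN}$ is dominated by $2e^{-\tilde cN/4}$ once $t\ge\exp(-cN/K^4)$, using $K\le N^\omega$ and $m\le cN/K^4$; for $t$ below this threshold the statement follows by monotonicity of the LHS together with the floor $2e^{-\tilde cN/4}$ in the RHS.

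The principal technical obstacle is matching the exponents in the final bound. The sharp $(m-1)$-exponent net together with the identity $l-(m-1)=d$ is exactly what converts the raw union-bound factor $(K/t)^{m-1}\cdot t^l$ into $K^{m-1}t^d$; a naive $m$-exponent net would produce only $K^m(Ct)^{d-1}$, which cascades into a strictly weaker rate downstream at (\ref{coro_of_dist}). A secondary subtlety is to verify that the fourth moment of $(Yx)_i$ remains bounded independently of $K$, so that the constants $\tilde C, \tilde c$ in Lemma \ref{dist_lemm} depend only on the maximal fourth moment and not on the subgaussian parameter $K$.
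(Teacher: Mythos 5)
Your proposal follows the same overall architecture as the paper's proof: an $\epsilon$-net of $S^J$ with cardinality exponent $m-1$ (from Lemma~\ref{card_nets}(2)), the pointwise distance bound from (\ref{dist_lemm_eq}), the algebraic cancellation $l-(m-1)=d$, a union bound, and a threshold argument splitting small and large $t$. You also correctly flag the crucial check that the fourth moment of $(Yx)_i$ is bounded independently of $K$, so $\tilde C,\tilde c$ do not degrade.

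There is, however, a concrete error in the handling of the residual term. You propose the threshold $t\ge\exp(-cN/K^4)$ and justify it via $m\le cN/K^4$. But the residual error from the union bound has the form
\[
4m\left(\frac{5C_0K}{t}\right)^{m-1}e^{-\tilde cN},
\]
and to make $(K/t)^{m-1}e^{-\tilde cN}\le e^{-\tilde cN/4}$ you need $(m-1)\log(1/t)\lesssim N$. With $t\ge e^{-cN/K^4}$ this gives $(m-1)\log(1/t)\lesssim (m-1)N/K^4$. Combined with $m\lesssim N/(K^4\log K)$, this is of order $N^2/(K^8\log K)$, which is \emph{not} $O(N)$ unless $K\gtrsim N^{1/8}$; for bounded $K$ (e.g.\ genuinely subgaussian data after truncation) the argument fails. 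The correct choice, as in the paper, is
\[
t_0:=\frac{e^{-\tilde cN/(4d)}}{C_2K},
\]
which exploits $m-1<d$ to get $(m-1)\cdot \tilde cN/(4d)\le \tilde cN/4$, i.e.\ the exponent cancellation is against $d$, not against $K^4$. With this $t_0$, the remaining $K$-dependent factor is controlled by $(C_0'K^2)^{\rho^2\delta n/2}\le e^{\tilde cN/4}$ after shrinking $\rho$, using $\delta = c_1N/(nK^4\log K)$. The small-$t$ case then closes as you indicate, using $K^{m-1}(C_2t_0)^d = K^{m-1-d}e^{-\tilde cN/4}\le e^{-\tilde cN/4}$ since $m\le d$, $K\ge 1$. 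Everything else in your sketch is sound.
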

\begin{proof}
Fix any $x\in S^J$. It is easy to verify that $Yx$ is a random vector that satisfies the assumptions for $X$ in Lemma \ref{dist_lemm}. Hence by (\ref{def_rm_w}) and (\ref{dist_lemm_eq}), we have
\begin{equation}\label{lower_single}
\mathbb P \left( \|Wx - w\|_2 < t\sqrt{d} \right) \le \mathbb P \left( \text{dist}(PYx-v,H_{J^c})< t \sqrt{l}\right) \le (\tilde Ct)^{l} + 2e^{-\tilde c N}.
\end{equation}
Let $\epsilon = t/(C_0K)$. By Lemma \ref{card_nets}, there exists an $\epsilon$-net $\mathcal N$ of $S^J$ with $|\mathcal N|\le  2m({5 C_0 K}/{t})^{m-1}.$ Consider the event $$\mathcal E_t:=\left\{ \inf_{x\in \mathcal N} \left\| Wx - w \right\|_2 <2t\sqrt{d}\right\}.$$
Taking the union bound, we get that
\begin{align*} 
\mathbb P (\mathcal E_t ) \le 2m\left( \frac{5C_0 K}{t}\right)^{m-1}\left[(2\tilde Ct)^{m+d-1} + 2e^{-\tilde c N}\right]  \le K^{m-1} (C_2 t)^{d} + 4m\left( \frac{5C_0 K}{t}\right)^{m-1}e^{- \tilde c N}.
\end{align*}
For $t\ge t_0 := e^{-\tilde c N/(4d)}/(C_2 K) $, we have
$$4m\left( \frac{5C_0 K}{t}\right)^{m-1} \le \left({C_0' K^2}\right)^{\rho^2 \delta n /2} e^{\tilde cN/4} \le e^{\tilde c N/2}$$
with an appropriate choice of $\rho$. Thus we get
\begin{align*} 
\mathbb P (\mathcal E_t ) \le K^{m-1} (C_2 t)^{d} + e^{- \tilde c N/2}, \ \ \text{ for } t\ge t_0.
\end{align*}
For $t< t_0$, we have
\begin{align*} 
\mathbb P (\mathcal E_t ) \le \mathbb P (\mathcal E_{t_0} ) \le K^{m-1} (C_2 t_0)^{d} + e^{- \tilde c N/2} \le 2e^{- \tilde c N/4}.
\end{align*}
Then applying the standard approximation argument, we can check that the probability in (\ref{probab_E1}) is bounded by $\mathbb P (\mathcal E_t)$, which concludes the proof.
\end{proof}


With Proposition \ref{opbound_W} and Lemma \ref{approx_weak}, we obtain that
$$\mathbb P \left( \inf_{x\in S^J} \|Wx - w\|_2 < t\sqrt{d}\right) \le K^{m-1}(C_2 t)^d + 2e^{- \tilde c N/4} + e^{-c_0 C_0^2 d}.$$
Unfortunately, the bound $e^{-c_0 C_0^2 d}$ is too weak for small $d$. Following the idea in \cite{RudVersh_rect}, we refine the probability bound by decoupling the information about $\|Wx-w\|_2$ from the information about $\|W\|$. The proof of next lemma is essentially the same as the one for Proposition 7.5 of \cite{RudVersh_rect}. We omit the details.

\begin{lem}[Decoupling]\label{lem_decouple}
Let $X$ be an $N\times m$ matrix whose columns are independent random vectors, and let $A$ be an $N\times N$ deterministic matrix. Let $z\in S^{m-1}$ be a vector satisfying $|z_k| \ge K_1/\sqrt{m}$ for all $ k \in \{1,\ldots, m\}$. Then for every $v\in \mathbb R^N$ and every $0< a < b$, we have
\begin{equation*}
\mathbb P \left(\|AXz - Av\|_2 < a, \|AX\| > b \right) \le 2 \sup_{y\in S^{m-1}, u \in \mathbb R^N} \mathbb P \left(\|AXy - Au\|_2 < \frac{\sqrt{2}a}{K_1}\right)\mathbb P \left(\|AX\| > \frac{b}{\sqrt{2}} \right).
\end{equation*}
\end{lem}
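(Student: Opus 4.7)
The plan is to decouple the joint event by splitting the columns of $X$ into two independent groups. Fix a subset $\sigma\subset\{1,\ldots,m\}$ with $|\sigma|=\lceil m/2\rceil$, and decompose $z=z^{(1)}+z^{(2)}$ with $z^{(1)}:=P_\sigma z$ and $z^{(2)}:=P_{\sigma^c}z$. Let $X^{(1)}$ and $X^{(2)}$ be the submatrices of $X$ consisting of the columns indexed by $\sigma$ and $\sigma^c$, respectively; by the column-independence hypothesis, $X^{(1)}$ and $X^{(2)}$ are independent. Since $|z_k|\ge K_1/\sqrt{m}$ and both $|\sigma|,|\sigma^c|\ge m/2$, one gets
$$\|z^{(1)}\|_2\ge K_1/\sqrt{2},\qquad \|z^{(2)}\|_2\ge K_1/\sqrt{2}.$$
For the norm event, I would use Weyl's subadditivity of the top eigenvalue together with the identity $XX^T=X^{(1)}(X^{(1)})^T+X^{(2)}(X^{(2)})^T$ and $\|AX\|^2=\lambda_{\max}(AXX^TA^T)$ to conclude
$$\|AX\|^2\le \|AX^{(1)}\|^2+\|AX^{(2)}\|^2,$$
so $\{\|AX\|>b\}\subseteq\{\|AX^{(1)}\|>b/\sqrt{2}\}\cup\{\|AX^{(2)}\|>b/\sqrt{2}\}$. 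Moreover $\|AX^{(i)}\|\le\|AX\|$, since $AX^{(i)}$ consists of a subset of the columns of $AX$.

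Denote $\mathcal{A}:=\{\|AXz-Av\|_2<a\}$ and $\mathcal{B}_i:=\{\|AX^{(i)}\|>b/\sqrt{2}\}$. It is then enough to bound $\mathbb{P}(\mathcal{A}\cap\mathcal{B}_i)$ for each $i\in\{1,2\}$ and sum, producing the factor $2$ in the conclusion. Take $i=1$; the other case is identical by relabeling. Write
$$AXz-Av=AXz^{(2)}-A\bigl(v-Xz^{(1)}\bigr)=\|z^{(2)}\|_2\bigl(AXy-Au'\bigr),$$
where $y:=z^{(2)}/\|z^{(2)}\|_2\in S^{m-1}$ is supported on $\sigma^c$ (so $AXy$ depends only on $X^{(2)}$), and $u':=(v-Xz^{(1)})/\|z^{(2)}\|_2\in\mathbb{R}^N$ depends only on $X^{(1)}$. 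The event $\mathcal{A}$ is thus equivalent to $\|AXy-Au'\|_2<a/\|z^{(2)}\|_2\le\sqrt{2}\,a/K_1$. Conditioning on $X^{(1)}$, the event $\mathcal{B}_1$ is determined and $u'$ becomes a fixed vector, while the residual event involves only the independent $X^{(2)}$. Bounding the conditional probability by its supremum over admissible $u'$, and then using $\mathbb{P}(\mathcal{B}_1)\le\mathbb{P}(\|AX\|>b/\sqrt{2})$, yields
\begin{align*}
\mathbb{P}(\mathcal{A}\cap\mathcal{B}_1)\le \sup_{\tilde y\in S^{m-1},\,\tilde u\in\mathbb{R}^N}\mathbb{P}\bigl(\|AX\tilde y-A\tilde u\|_2<\sqrt{2}\,a/K_1\bigr)\cdot\mathbb{P}\bigl(\|AX\|>b/\sqrt{2}\bigr),
\end{align*}
which is exactly what we need; summing over $i=1,2$ gives the stated bound.

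The only subtle point is the simultaneous appearance of the two constants $b/\sqrt{2}$ and $\sqrt{2}\,a/K_1$, and both of them fall out of the same balanced split $|\sigma|\approx m/2$: Weyl's subadditivity of the top eigenvalue supplies the factor $1/\sqrt{2}$ on the $b$ side, and the pointwise spread $|z_k|\ge K_1/\sqrt{m}$, summed over roughly half of $\{1,\ldots,m\}$, supplies the factor $\sqrt{2}/K_1$ on the $a$ side. The independence of $X^{(1)}$ and $X^{(2)}$ is precisely what makes the conditional argument factorize, and no finer probabilistic input is required.
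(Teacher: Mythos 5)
Your approach is essentially the same as the proof in Rudelson--Vershynin (Proposition 7.5 of \cite{RudVersh_rect}) that the paper refers to: split the independent columns of $X$ into two halves, use the identity $AXX^TA^T=AX^{(1)}(X^{(1)})^TA^T+AX^{(2)}(X^{(2)})^TA^T$ together with subadditivity of $\lambda_{\max}$ to localize the norm event in one half, then condition on that half to decouple the two events, and finally use the spread hypothesis on $z$ to control the rescaling constant. All of these steps are correct, and the factor $2$ arises exactly as you describe from summing over the two cases.

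There is one small but genuine slip: with $|\sigma|=\lceil m/2\rceil$ you assert that ``both $|\sigma|,|\sigma^c|\ge m/2$,'' which is false when $m$ is odd, since then $|\sigma^c|=\lfloor m/2\rfloor<m/2$. In that case you only get $\|z^{(2)}\|_2\ge K_1\sqrt{\lfloor m/2\rfloor/m}$, which is strictly smaller than $K_1/\sqrt{2}$, so the radius that comes out in the first case is $\sqrt{2m/\lfloor m/2\rfloor}\,a/K_1$, not $\sqrt{2}\,a/K_1$. The cleanest patch is to choose the partition adaptively in $z$: order $|z_1|\ge\cdots\ge|z_m|$ and take $\sigma$ to be the first index set at which the partial sum of $z_k^2$ first reaches $K_1^2/2$; since $z_1^2\le 1-(m-1)K_1^2/m\le 1-K_1^2/2$ for $m\ge 2$ and each increment is at most $z_1^2$, both $\|z^{(\sigma)}\|_2^2$ and $\|z^{(\sigma^c)}\|_2^2$ land in $[K_1^2/2,\,1-K_1^2/2]$ (using $K_1\le\sqrt{2/3}$, which holds here since $K_1=\rho\sqrt{\delta/2}$ with $\rho\le 1/(16C_1)$). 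Alternatively, one can simply accept the slightly worse radius $\sqrt{2m/(m-1)}\,a/K_1$ for odd $m$; this only perturbs the constant $C_3$ in Lemma \ref{approx_refine} and is harmless for Theorem \ref{uni_dist_bound}. Either way the argument goes through; the only thing to flag is that the balanced-cardinality split you chose does not by itself deliver the stated $\sqrt{2}$.

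Two further remarks for completeness. First, the inequality $\|AX^{(i)}\|\le\|AX\|$, which you use to pass from $\mathbb P(\mathcal B_i)$ to $\mathbb P(\|AX\|>b/\sqrt2)$, is correct because $AX^{(i)}$ is a column submatrix of $AX$. Second, the degenerate case $m=1$ has no nontrivial split at all, and the decoupling inequality cannot be obtained by this conditioning argument there; this corner case is inherited from the original \cite{RudVersh_rect} formulation and is outside what your (and the paper's) argument covers.
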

\begin{remark}
By (\ref{total_spread}), all the vectors in $S^J$ satisfy the assumption for $z$ in Lemma \ref{lem_decouple}.
\end{remark}

With this decoupling lemma, we can prove the following refinement of Lemma \ref{approx_weak}.

\begin{lem}\label{approx_refine}
Let $W$ be a random matrix as in (\ref{def_rm_W}) and let $w$ be a random vector as in (\ref{def_rm_w}). For every $s\ge 1$ and every $t\ge 0$, we have
$$\mathbb P \left( \inf_{x\in S^J} \|Wx - w\|_2 < t\sqrt{d} \text{ and } sC_0 K \sqrt{d}< \|W\|\le 2sC_0K\sqrt{d}\right) \le \left[\frac{K^{m-1} \left(C_3 t\right)^d} {K_1^{m+d-1}} + 2e^{-\tilde c N/4}\right]e^{-c_1 s^2 d},$$
where $c_1$ is an absolute constant and $C_3$ depends only on $\tilde C$.
\end{lem}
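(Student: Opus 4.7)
The plan is to refine Lemma \ref{approx_weak} by inserting the decoupling Lemma \ref{lem_decouple} between the single-vector small ball estimate and the $\epsilon$-net union bound. Decoupling is exactly what lets one extract the extra Gaussian-type factor $e^{-c_1 s^2 d}$ coming from the event $\|W\|>sC_0K\sqrt d$; without it the union-bound argument of Lemma \ref{approx_weak} collapses the $\|W\|$ large deviation information and only produces $e^{-c_0C_0^2d}$, which is insufficient when $d$ is small.

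First I would condition on $H_{J^c}$ throughout (harmless, since the target bound will be uniform in the realization): then $P_{H^\perp}$ is deterministic while $W=P_{H^\perp}PY|_{\mathbb R^J}$ has the independent columns $\{P_{H^\perp}PY_k\}_{k\in J}$. Choose an $\epsilon$-net $\mathcal N\subset S^J$ with spacing $\epsilon=t/(4sC_0K)$, so that Lemma \ref{card_nets} gives $|\mathcal N|\le 2m(CsK/t)^{m-1}$. On the event under consideration, the bound $\|W\|\le 2sC_0K\sqrt d$ together with the triangle inequality promotes $\inf_{x\in S^J}\|Wx-w\|_2<t\sqrt d$ to $\|Wx-w\|_2<2t\sqrt d$ for some $x\in\mathcal N$, while still $\|W\|>sC_0K\sqrt d$.

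For each such fixed $x\in\mathcal N$ I would invoke Lemma \ref{lem_decouple} with $A=P_{H^\perp}$, with $X$ the $N\times m$ matrix whose columns are the $\{PY_k\}_{k\in J}$, with $z=x$ (legitimate because \eqref{total_spread} supplies the required lower bound $|x_k|\ge K_1/\sqrt m$), and with $v$ chosen so that $Av=w$. The resulting product bound has two factors. The first is the single-vector small ball probability
$$\sup_{y\in S^{m-1},\,u\in\mathbb R^N}\mathbb P\bigl(\|Wy-P_{H^\perp}u\|_2<2\sqrt 2\,t\sqrt d/K_1\bigr),$$
controlled by Lemma \ref{dist_lemm} in the form \eqref{dist_lemm_eq} applied to the i.i.d.-coordinate random vector $Yy$; using $l=m+d-1\le 2d$ from \eqref{lambda0} it is at most $(Ct/K_1)^{l}+2e^{-\tilde cN}$. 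The second is $\mathbb P(\|W\|>sC_0K\sqrt d/\sqrt 2)$, bounded via Proposition \ref{opbound_W} by $e^{-c_1s^2d}$ after absorbing the stray $\sqrt 2$ into the absolute constants (one can re-run the net proof of Proposition \ref{opbound_W} to ensure the range of validity covers all $s\ge 1$).

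Combining via a union bound over $\mathcal N$ gives the upper bound
$$4m(CsK/t)^{m-1}\bigl[(Ct/K_1)^{m+d-1}+2e^{-\tilde cN}\bigr]e^{-c_1s^2d}.$$
In the principal term the $t^{m-1}$ factors cancel and one is left with $m(CsK)^{m-1}(Ct)^d/K_1^{m+d-1}\cdot e^{-c_1s^2d}$; since $m\le d$ and $s\ge 1$, the polynomial prefactor $m(Cs)^{m-1}$ is absorbed into $e^{-(c_1/2)s^2d}$, producing the desired $K^{m-1}(C_3t)^d/K_1^{m+d-1}\cdot e^{-c_1's^2d}$. The residual term $m(sK/t)^{m-1}e^{-\tilde cN}$ is handled by the same threshold trick as in Lemma \ref{approx_weak}: for $t\ge t_0=e^{-\tilde cN/(4d)}/(CK)$ the net size is small enough that this term is dominated by $2e^{-\tilde cN/4}$, and for smaller $t$ the event is monotone in $t$, so one substitutes $t_0$ for $t$. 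The main obstacle I expect is exactly this final bookkeeping, where one must arrange the ingredients $K\le N^\omega$ (with $\omega$ small), $m\le d$, and the Gaussian-type decay in $s$ so that every polynomial prefactor is eaten while $K^{m-1}$ survives intact and the net-induced $|\mathcal N|e^{-\tilde cN}$ term stays subdominant.
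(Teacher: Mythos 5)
Your overall plan mirrors the paper's proof exactly: pass to an $\epsilon$-net of $S^J$ with spacing $\sim t/(sK)$, apply the decoupling Lemma \ref{lem_decouple} conditionally on $H_{J^c}$ (valid since the columns of $W$ are then independent), bound the operator-norm factor by Proposition \ref{opbound_W}, absorb the polynomial $s$-prefactor using $m\le d$ and $s\ge1$, and finally dispose of the net-size times $e^{-\tilde cN}$ residual by the $t\ge t_1$ threshold trick. The constants and the net spacing differ by harmless absolute factors.

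There is, however, one genuine gap: you invoke the \emph{unconditional} form (\ref{dist_lemm_eq}) to bound the small-ball factor produced by decoupling, but what actually appears after decoupling (conditionally on $H_{J^c}$) is
$$\sup_{y\in S^{m-1},\,u\in\mathbb R^N}\mathbb P\bigl(\|Wy-P_{H^\perp}u\|_2<\cdot\;\big|\;H_{J^c}\bigr),$$
a \emph{conditional} quantity that one must then integrate over $H_{J^c}$. Your opening remark that conditioning is ``harmless, since the target bound will be uniform in the realization'' is exactly the point that fails: the conditional small-ball bound is \emph{not} uniform in $H_{J^c}$ — for a bad realization (low LCD of $E(H_{J^c})$) the conditional probability can be order one. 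The estimate (\ref{dist_lemm_eq}) bounds $\sup_u\mathbb E[\mathbb P(\cdot\mid H_{J^c})]$, but here one needs $\mathbb E[\sup_{y,u}\mathbb P(\cdot\mid H_{J^c})]$, and expectation of a supremum cannot be bounded by the supremum of expectations. This is precisely why the paper proves the stronger statement (\ref{dist_lemm_eq2}): on a good event for $H_{J^c}$ of probability $\ge 1-e^{-\tilde cN}$ the conditional supremum is $\le(\tilde C\epsilon)^l+e^{-\tilde cN}$ (and the bound from the proof of Lemma \ref{dist_lemm} is uniform over $y\in S^{m-1}$ because it depends on $y$ only through uniformly controlled fourth moments), while on the bad event the supremum is trivially $\le1$; taking expectation then gives the desired bound. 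You should replace your appeal to (\ref{dist_lemm_eq}) with (\ref{dist_lemm_eq2}) and carry out this good/bad event split when integrating out $H_{J^c}$.
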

\begin{proof}
Let $\epsilon = t/(2sC_0 K)$. By Lemma \ref{card_nets}, there exists an $\epsilon$-net $\mathcal N$ of $S^J$ with $|\mathcal N|\le 2m\left( {9sC_0 K}/{t}\right)^{m-1}.$
Consider the event $$\mathcal E_t :=\left\{ \inf_{x\in \mathcal N} \left\| Wx - w \right\|_2 < 2t\sqrt{d} \text{ and } \|W\| > sC_0 K\sqrt{d}\right\}.$$
Conditioning on $H_{J^c}$, we can apply Lemma \ref{lem_decouple} to get that 
\begin{align*}
\mathbb P\left(\left.\mathcal E_t \right|H_{J^c}\right) \le |\mathcal N| \cdot 2 \sup_{x\in S^{m-1}, v \in \mathbb R^N} \mathbb P \left( \left. \|Wx - P_{H^\perp}v\|_2 < \frac{\sqrt{2}}{K_1} \cdot 2t\sqrt{d} \, \right| H_{J^c}\right)\mathbb P \left(\left. \|W\|\ge \frac{sC_0K\sqrt{d}}{\sqrt{2}}\,\right| H_{J^c} \right)
\end{align*}
Taking expectation over $H_{J^c}$ and using Proposition \ref{opbound_W}, we obtain that
\begin{align*}
\mathbb P(\mathcal E_t) & \le 4m\left( \frac{9sC_0 K}{t}\right)^{m-1}e^{-c_0 C_0^2 s^2 d/2} \mathbb E \left[ \sup_{x\in S^{m -1}, v \in \mathbb R^N} \mathbb P \left( \left. \|Wx - P_{H^\perp}v\|_2 < \frac{\sqrt{2}}{K_1} \cdot 2t\sqrt{d} \, \right| H_{J^c}\right)\right] \nonumber\\
& \le 4m\left( \frac{9C_0 K}{t}\right)^{m-1} \left(s^{m-1}e^{-c_0 C_0^2 s^2 d/2}\right) \left[ \left(\frac{2\sqrt{2}\tilde Ct}{K_1}\right)^{m+d-1} + 2 e^{- \tilde cN}\right],
\end{align*}
where in the second step we used the representation in (\ref{def_rm_w}) and the estimate (\ref{dist_lemm_eq2}). Since $s\ge 1$ and $1\le m \le d$, we can bound this as
\begin{align*}
\mathbb P(\mathcal E_t) \le \left[\frac{K^{m-1} \left(C_3 t\right)^d} {K_1^{m+d-1}} + C_4 m\left( \frac{9C_0 K}{t}\right)^{m-1} e^{-\tilde c N}\right]e^{-c_1 s^2 d},
\end{align*}
where $C_4>0$ is an absolute constant.
For $t\ge t_1 := e^{-\tilde c N/(4d)}K_1^2 /(C_3 K)$, we have
$$C_4 m\left( \frac{9C_0 K}{t}\right)^{m-1} \le \left({C_0' K^2 K_1^{-2}}\right)^{\rho^2 \delta n /2}  e^{\tilde c N/4} \le e^{\tilde c N/2}$$
with an appropriate choice of $\rho$. Thus we get
\begin{align*} 
\mathbb P (\mathcal E_t ) \le \left[\frac{K^{m-1} \left(C_3 t\right)^d} {K_1^{m+d-1}} + e^{- \tilde c N/2}\right]e^{-c_1 s^2 d}, \ \ \text{ for } t\ge t_1.
\end{align*}
For $t< t_1$, we have
\begin{align*} 
\mathbb P (\mathcal E_t ) \le \mathbb P (\mathcal E_{t_1} ) \le \left[\frac{K^{m-1} \left(C_3 t_1\right)^d} {K_1^{m+d-1}} + e^{- \tilde c N/2}\right]e^{-c_1 s^2 d} \le 2e^{- \tilde c N/4}e^{-c_1 s^2 d}.
\end{align*}
Suppose there exists $y\in S^J$ such that 
$$ \|Wy - w\|_2 < t\sqrt{d} \ \text{ and }\ sC_0 K \sqrt{d}< \|W\|\le 2sC_0K\sqrt{d}.$$
Then we choose $x\in \mathcal N$ such that $\|x-y\|_2 \le \epsilon$, and by triangle inequality we obtain that
$$\|Wx - w\|_2 \le \|Wy - w\|_2 + \|W\|\|x-y\|_2 < t\sqrt{d} + 2sC_0K\sqrt{d} \epsilon \le 2t\sqrt{d},$$
i.e. the event $\mathcal E_t$ holds. Then the bound for $P(\mathcal E_t)$ concludes the proof. \end{proof}
 
\begin{proof} [Proof of Theorem \ref{uni_dist_bound}]
Summing the probability bounds in Lemma \ref{approx_weak} and Lemma \ref{approx_refine} for $s=2^k$, $k \in \mathbb Z_+$, we conclude that 
\begin{align*}
\mathbb P \left( \inf_{x\in S^J} \|Wx - w\|_2 < t\sqrt{d} \right) & \le K^{m-1}(C_2 t)^d + 2e^{- \tilde c N/4} + \left[\frac{K^{m-1} \left(C_3 t\right)^d} {K_1^{m+d-1}} + 2e^{-\tilde c N/4}\right] \sum_{s=2^k, k \in \mathbb Z_+}e^{-c_1 s^2 d} \\
&\le \left( C_5 KK_1^{-2} t \right)^d + C_6 e^{- \tilde c N/4}.
\end{align*}
Using that $K_1 = \rho\sqrt{\delta/2}$ (see (\ref{total_spread})) and $\delta= c_1 N/(nK^4 \log K)$ (see (\ref{def_deltarho})), we get
\begin{align*}
\mathbb P \left( \inf_{x\in S^J} \|Wx - w\|_2 < t\sqrt{d} \right) \le \left( C t K^5\log K \right)^d + Ce^{- \tilde c N/4}.
\end{align*}
In view of the representation (\ref{def_rm_w}), this concludes the proof. 
\end{proof}

\section{Proof of Lemma \ref{dist_lemm}}\label{section_distance}

We will first prove a general inequality that holds for any fixed subspace $H$ in $\mathbb R^N$ of codimension $l= m+d-1$. This probability bound will depend on the arithmetic structure of $H$, which can be expressed using the {\it{least common denominator}} (LCD). 
Following the notations in \cite{RudVersh_rect}, for $\alpha>0$ and $\gamma\in (0,1)$, we define the least common denominator of a vector $a \in \mathbb R^M$ as
$${\rm{LCD}}_{\alpha,\gamma}(a):=\inf\left\{ \theta>0: {\rm{dist}}(\theta a,\mathbb Z^M) < \min(\gamma \|\theta a\|_2,\alpha)\right\}.$$ 
More generally, let $a=(a_1, \ldots, a_M)$ be a sequence of vectors $a_k \in \mathbb R^l$. We define the product of such multi-vector $a$ and a vector $\theta\in \mathbb R^l$ as
$$\theta \cdot a := \left( \langle \theta , a_1\rangle , \ldots, \langle \theta , a_M\rangle\right)\in \mathbb R^M.$$
Then we define, for $\alpha>0$ and $\gamma \in (0,1)$,
$${\rm{LCD}}_{\alpha,\gamma}(a):=\inf\left\{ \|\theta\|_2: \theta\in \mathbb R^l, {\rm{dist}}(\theta \cdot a,\mathbb Z^M) < \min(\gamma \|\theta \cdot a\|_2,\alpha)\right\}.$$
Finally, the least common denominator of a subspace $E\subseteq \mathbb R^M$ is defined as
\begin{equation}\label{LCD_space}
\text{LCD}_{\alpha,\gamma}(E):=\inf\{\text{LCD}_{\alpha,\gamma}(a): a \in S(E)\} =\inf\left\{\|\theta\|_2:\theta\in E, \text{dist}(\theta,\mathbb Z^M)< \min(\gamma \|\theta\|_2,\alpha)\right\}.
\end{equation}
A key to the proof is the next small ball probability theorem.
\begin{thm}[Theorem 3.3 of \cite{RudVersh_rect}] 
\label{thm_smallball}
Consider a sequence $a=(a_1, \ldots, a_M)$ of vectors $a_k \in \mathbb R^l$, which satisfies
\begin{equation}\label{isotropic_x}
\sum_{k=1}^M \langle x, a_k\rangle^2 \ge \|x\|_2^2 \ \ \text{ for every } x\in \mathbb R^l.
\end{equation}
Let $\xi_1,\ldots,\xi_M$ be i.i.d. centered random variables, such that $\mathcal L(\xi_k, 1) \le 1-b$ for some $b>0$. Consider the random sum $S:=\sum_{k=1}^M a_k \xi_k \in \mathbb R^l$. Then, for every $\alpha >0$ and $\gamma \in (0,1)$, and for 
$$\epsilon \ge \frac{\sqrt{l}}{{\rm{LCD}}_{\alpha, \gamma} (a)},$$
we have
$$\mathcal L\left(S,\epsilon \sqrt{l}\right) \le \left(\frac{C\epsilon}{\gamma \sqrt{b}}\right)^l + C^l e^{-2b\alpha^2}.$$
\end{thm}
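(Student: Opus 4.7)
The plan is to prove this via the Halász/Rudelson--Vershynin strategy: reduce the small-ball probability to a characteristic-function integral via an Esseen-type inequality, factor the characteristic function using independence, and exploit the LCD hypothesis to control the resulting integral. First I would invoke a multidimensional Esseen-type inequality of the form
$$
\mathcal{L}(S, \epsilon\sqrt{l}) \leq (C\epsilon)^l \int_{B(0,\,C_0\sqrt{l}/\epsilon)} |\phi_S(\theta)|\, d\theta,
$$
where $\phi_S(\theta) = \mathbb{E}\exp(i\langle\theta,S\rangle)$ and $B(0,R)\subset\mathbb{R}^l$ is the Euclidean ball. Independence yields the factorization
$$
|\phi_S(\theta)| = \prod_{k=1}^M |\phi_{\xi_k}(\langle\theta,a_k\rangle)|.
$$

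Second, I would derive a per-coordinate decay estimate for $|\phi_{\xi_k}(s)|$. Introducing an independent copy $\xi'_k$ and using $\mathcal{L}(\xi_k,1)\leq 1-b$ (which gives $\mathbb{P}(|\xi_k-\xi'_k|\geq 1)\geq b$), the identity $|\phi_{\xi_k}(s)|^2 = \mathbb{E}\cos\!\bigl(s(\xi_k-\xi'_k)\bigr)$ together with $1-\cos t \geq c\min(t^2,1)$ yields
$$
|\phi_{\xi_k}(s)| \leq \exp\bigl(-c_1 b\, \rho(s)^2\bigr),
$$
where $\rho(s)$ is comparable to $\mathrm{dist}(s,\mathbb{Z})$ in the normalization built into the definition of the LCD. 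Taking products across $k$ gives
$$
|\phi_S(\theta)| \leq \exp\bigl(-c_2 b\, \mathrm{dist}(\theta\cdot a, \mathbb{Z}^M)^2\bigr).
$$

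Third, I would use the LCD hypothesis. For $\|\theta\|_2 < \mathrm{LCD}_{\alpha,\gamma}(a)$, the definition \eqref{LCD_space} combined with the isotropy condition $\sum_k\langle x,a_k\rangle^2 \geq \|x\|_2^2$ forces
$$
\mathrm{dist}(\theta\cdot a,\mathbb{Z}^M) \geq \min\bigl(\gamma\|\theta\cdot a\|_2,\, \alpha\bigr) \geq \min\bigl(\gamma\|\theta\|_2,\, \alpha\bigr),
$$
so $|\phi_S(\theta)|\leq\exp\bigl(-c_3 b\min(\gamma^2\|\theta\|_2^2, \alpha^2)\bigr)$. The hypothesis $\epsilon\geq\sqrt{l}/\mathrm{LCD}_{\alpha,\gamma}(a)$ ensures $B(0,C_0\sqrt{l}/\epsilon)\subset B(0,\mathrm{LCD})$, so this bound applies on the entire integration domain. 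I split it into two regions: on $\|\theta\|_2\leq\alpha/\gamma$ the Gaussian tail $\exp(-c_3 b\gamma^2\|\theta\|_2^2)$ integrates over $\mathbb{R}^l$ to $(C/(\gamma\sqrt{b}))^l$, which combined with the $(C\epsilon)^l$ prefactor yields the first term $(C\epsilon/(\gamma\sqrt{b}))^l$; on $\alpha/\gamma<\|\theta\|_2\leq C_0\sqrt{l}/\epsilon$ the bound $|\phi_S(\theta)|\leq e^{-c_3 b\alpha^2}$ is uniform, and the volume of this ball ($\lesssim (C_0\sqrt{l}/\epsilon)^l V_l \lesssim (C/\epsilon)^l$) combined with the prefactor yields the second term $C^l e^{-2b\alpha^2}$ after tuning constants.

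The main obstacle is step two: upgrading the crude anticoncentration $\mathcal{L}(\xi_k,1)\leq 1-b$ to a bound of the form $|\phi_{\xi_k}(s)|\leq\exp(-cb\,\mathrm{dist}(s,\mathbb{Z})^2)$ with linear dependence on $b$. The elementary inequality $1-\cos t\geq c\min(t^2,1)$ only yields quadratic decay near $s=0$, so producing a bound that respects the modular structure (as required to mesh with the LCD) demands a careful two-scale symmetrization, splitting $s(\xi_k-\xi'_k)$ into contributions near and away from $2\pi\mathbb{Z}$, and properly tracking the constant $b$. Once this per-coordinate estimate is in hand, the Esseen reduction and the two-regime integration in step three are routine.
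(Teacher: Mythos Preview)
The paper does not prove this theorem; it is quoted as Theorem~3.3 of \cite{RudVersh_rect} and used as a black box. Your outline is essentially the original Rudelson--Vershynin argument (Esseen inequality, symmetrization, then a two-regime split of the $\theta$-integral according to whether $\gamma\|\theta\|_2$ or $\alpha$ is smaller), so there is no independent proof here to compare against.

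One remark on your step two: the per-coordinate bound $|\phi_{\xi_k}(s)|\le\exp\bigl(-c_1 b\,\rho(s)^2\bigr)$ with $\rho(s)\sim\mathrm{dist}(s,\mathbb Z)$ is not literally what symmetrization delivers, since a general $\xi_k$ has no lattice periodicity in its characteristic function. What one actually obtains is
\[
|\phi_{\xi_k}(s)|^2 \;\le\; 1 - c\,b\,\mathbb E\Bigl[\min\bigl(\mathrm{dist}(s\bar\xi_k/\pi,\mathbb Z)^2,\,1\bigr)\,\Big|\,|\bar\xi_k|\ge 1\Bigr],
\]
and after taking the product over $k$ and conditioning on the symmetrized variables $\bar\xi_k$, the quantity to control is $\mathrm{dist}$ of a \emph{randomly rescaled} version of $\theta\cdot a$ to $\mathbb Z^M$. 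The LCD hypothesis then applies because the rescaling factors are bounded below by $1$ on the conditioning event. You correctly flag this as the delicate step, but the resolution is this conditioning-and-rescaling maneuver rather than a ``two-scale symmetrization''; once that is in place, your steps one and three go through as written.
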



Let $H$ be a fixed subspace in $\mathbb R^N$ of codimension $l$. We denote an orthonormal basis of $H^\perp$ by $\{n_1 , \ldots, n_l\} \subseteq \mathbb R^N$, and write $X$ in coordinates as $X=(\xi_1 , \ldots, \xi_M)$. Then using $PP^T=1$, we get
\begin{align*}
\text{dist}(PX-v,H) & = \|P_{H^\perp} (PX - v)\|_2 = \left\| \sum_{r = 1}^l \langle PX, n_r\rangle n_r - P_{H^\perp} v\right\|_2 = \left\| \sum_{r = 1}^l \langle X ,P^T n_r\rangle n_r - P_{H^\perp} v\right\|_2 \\
& = \left\| \sum_{r = 1}^l \langle X ,P^T n_r\rangle P^T n_r - P^T P_{H^\perp} v\right\|_2 = \|P_E X - w\|_2 =  \left\| \sum_{k=1}^M a_k \xi_k - w\right\|_2,
\end{align*}
where 
$$E\equiv E(H):=P^T H^\perp, \ \ a_k :=P_E e_k, \ \ w := P^TP_{H^\perp}v ,$$
and where $e_1, \ldots, e_M$ denote the canonical basis of $\mathbb R^M$. Notice that
$$\sum_{k=1}^M \langle x , a_k \rangle^2 = \|x\|_2^2, \ \ \text{for any }x\in E.$$
Hence we can use Theorem \ref{thm_smallball} in the space $E$ (identified with $\mathbb R^l$ by a suitable isometry). For every $\theta=(\theta_1, \ldots, \theta_M)\in E$ and every $k$, we have
$\langle \theta, a_k\rangle = \langle \theta, e_k\rangle=\theta_k$, so $\theta \cdot a = \theta$, where the right hand side is considered as a vector in $\mathbb R^M$. Therefore, we have
$$\text{LCD}_{\alpha,\gamma}(E)= \text{LCD}_{\alpha,\gamma}(a).$$
By Lemma \ref{lem_Paley}, $\mathcal L(\xi_k,1/2) \le 1-b$ for some $b>0$ that depends only on the fourth moment of $\xi_k$. Hence we can apply Theorem \ref{thm_smallball} to $S=\sum_{k=1}^M a_k \xi_k$ and conclude that for every $\epsilon >0$,
\begin{equation}\label{reduce_to_LCD}
\mathbb P\left(\text{dist}(PX-v,H)<\epsilon\sqrt{l}\right) \le \mathcal L(S,\epsilon \sqrt{l}) \le \left(\frac{C\epsilon}{\gamma}\right)^l + \left(\frac{C\sqrt{l}}{\gamma{\rm{LCD}}_{\alpha,\gamma}(E)}\right)^l + C^l e^{-c\alpha^2}.
\end{equation}
Now it suffices to bound below the least common denominator of the random subspace $E$. Heuristically, the randomness should remove any arithmetic structure from the subspace $E$ and make the LCD exponentially large. The next theorem shows that this is indeed true. 

\begin{thm}\label{structure_thm}
Suppose $\xi_1, \ldots , \xi_{N-l}$ are independent centered random variables with unit variance and uniformly bounded fourth moment. Let $\tilde Y$ be an $M\times (N-l)$ random matrix whose rows are independent copies of the random vector $(\xi_1,\ldots,\xi_{N-l})$, and $\tilde A$ be an $N\times (N-l)$ deterministic matrix. Suppose that $\|\tilde Y\| + \|\tilde A\|\le C_1\sqrt{N}$ for some constant $C_1>0$. Let $H$ be the random subspace of $\mathbb R^N$ spanned by the column vectors of $P\tilde Y-\tilde A$, and define the subspace $E\equiv E(H) : = P^T H^\perp \subseteq \mathbb R^M$. Then for $\alpha= c\sqrt{N}$, we have
$$\mathbb P\left({\rm{LCD}}_{\alpha,c}(E) < c\sqrt{N}e^{cN/l}\right) \le e^{-cN},$$
where $c$ depends only on $\Lambda$, $C_1$ and the maximal fourth moment.
\end{thm}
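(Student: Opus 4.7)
\textbf{Proof plan for Theorem \ref{structure_thm}.} The plan adapts the structure theorem of \cite{RudVersh_rect} (for the kernel of an i.i.d. random matrix) to the multiplicatively-and-additively deformed setting. The key reformulation is: for $\theta \in F := \mathrm{Range}(P^T) \subseteq \mathbb R^M$, the condition $\theta \in E = P^T H^\perp$ is equivalent to $\tilde Y^T \theta = \tilde A^T P\theta$, a system of $N-l$ linear equations whose left-hand sides are mutually independent (they depend on different columns of $\tilde Y$). I would work by a dyadic decomposition of the LCD scale: for each $D = 2^k$ with $k \le \log_2 D_0 = O(N/l)$, where $D_0 = c\sqrt{N}\,e^{cN/l}$, bound
\[ \mathbb P_D := \mathbb P\bigl(\exists\, \theta \in E:\ \|\theta\|_2 \in [D, 2D],\ \mathrm{dist}(\theta, \mathbb Z^M) < \epsilon_D\bigr),\qquad \epsilon_D := \min(2c\,D,\,\alpha), \]
and sum over the $O(N/l)$ scales.

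At a fixed scale $D$, every bad $\theta$ lies within $\epsilon_D$ of some lattice point $p \in \mathbb Z^M$ with $\|p\|_2 \le 3D$ and $\mathrm{dist}(p, F) < \epsilon_D$. Denote this set by $\Lambda_D$ and set $\theta_p := P_F p$. A volume-packing estimate gives $|\Lambda_D| \le (CD/\sqrt{N})^N \cdot (C(\epsilon_D+\sqrt{M-N})/\sqrt{M-N})^{M-N}$ (with the convention $0^0 = 1$ when $M = N$). Existence of $\psi \in E$ within $\epsilon_D$ of $\theta_p$ forces $\|\tilde Y^T \theta_p - \tilde A^T P \theta_p\|_2 \le C_1\sqrt{N}\,\epsilon_D$, using $\|\tilde Y\| + \|\tilde A\| \le C_1\sqrt{N}$. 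Since the $N-l$ coordinates of $\tilde Y^T \theta_p$ are independent scalar sums $\sum_i (\theta_p)_i \xi_i^{(j)}$, I would apply Theorem \ref{thm_smallball} with $l=1$ to each coordinate; because $\theta_p$ is $O(\epsilon_D)$-close to the integer vector $p$, the unit vector $\bar\theta_p = \theta_p/\|\theta_p\|_2$ has $\mathrm{LCD}_{\alpha,c}(\bar\theta_p) = O(D)$, giving a per-coordinate small-ball bound of order $C\epsilon/(c\|\theta_p\|_2) + Ce^{-c\alpha^2}$ at scales $\epsilon$ above threshold. Tensorizing via Lemma \ref{lem_tensor} (after peeling off the additive $e^{-c\alpha^2}$ term by a high-probability conditioning) then yields
\[ \mathbb P\bigl(\|\tilde Y^T \theta_p - \tilde A^T P \theta_p\|_2 < C_1\sqrt{N}\,\epsilon_D\bigr) \le \bigl(C'\,\epsilon_D/(cD)\bigr)^{N-l} + e^{-c'N}, \]
and a union bound over $p \in \Lambda_D$ gives the scale-$D$ estimate $\mathbb P_D$.

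The main technical obstacle is to balance the cardinality $|\Lambda_D|$ (of order $D^N \epsilon_D^{M-N}$ up to polynomial factors) against the tensorized SBP decay $(\epsilon_D/(cD))^{N-l}$ uniformly over $D \in [1, D_0]$. Their product simplifies to roughly $D^l \epsilon_D^{M-l}/c^{N-l}$, which at the critical endpoint $D = D_0$, $\epsilon_D = \alpha = c\sqrt{N}$, becomes $\sim c^{M-N+l}\, N^{M/2}\, e^{cN}$. The constant $c$ in $D_0 = c\sqrt{N}e^{cN/l}$ has to be chosen strictly smaller than the implicit SBP constant so that the residue decays as $e^{-c''N}$; the choice $\alpha = c\sqrt{N}$ is forced by the requirement that $e^{-c\alpha^2}$ dominate all remaining additive error; and the parameter $\gamma = c$ must be taken small enough for the threshold hypothesis $\epsilon \ge 1/\mathrm{LCD}$ in Theorem \ref{thm_smallball} to hold at every admissible scale, these three calibrations together producing the bound $e^{-cN}$ claimed in the statement.
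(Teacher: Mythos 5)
Your reformulation $\theta \in E \iff \tilde Y^T \theta = \tilde A^T P\theta$ and the overall architecture (dyadic scales, lattice-point net, small-ball bound, union bound) match the paper's strategy in spirit. However, there is a genuine gap in the small-ball step, and it is exactly the point where the paper's construction differs from yours. You write that because $\theta_p = P_F p$ is $O(\epsilon_D)$-close to an integer point $p$, the unit vector $\bar\theta_p$ has $\mathrm{LCD}_{\alpha,c}(\bar\theta_p) = O(D)$, and you then feed this into Theorem~\ref{thm_smallball}. But proximity to a lattice point at scale $D$ only yields an \emph{upper} bound $\mathrm{LCD}(\bar\theta_p) \lesssim D$; Theorem~\ref{thm_smallball} needs a \emph{lower} bound on the LCD to make the threshold $\epsilon \ge 1/\mathrm{LCD}$ small enough. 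Nothing in your net construction prevents $P_F p$ from accidentally being very close to a sublattice at a much smaller scale, in which case $\mathrm{LCD}(\bar\theta_p) \ll D$ and the per-coordinate small-ball probability at your working scale $\epsilon'$ is not controlled by $C\epsilon'/(cD)$. The anti-concentration bound simply cannot be applied to a net point whose LCD you have not bounded from below.

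The paper avoids this by parametrizing the dyadic decomposition by the LCD of \emph{unit} vectors rather than by the norm of the witnessing $\theta$: it defines level sets $S_D \subseteq S^{M-1}\cap P^T\mathbb R^N$ consisting of incompressible unit vectors with $D \le \mathrm{LCD}_{\alpha,c}(x) < 2D$, builds the $(4\alpha/D)$-net from lattice points, and then crucially \emph{moves the net into $S_D$} (last step of Lemma~\ref{net_counting}, invoking Lemma 5.7 of \cite{RudVersh_square}). That relocation is what guarantees each net point $x$ has $\mathrm{LCD}_{\alpha,c}(x) \ge D$, so the single-point bound of Lemma~\ref{structure_single} legitimately exploits Theorem~\ref{thm_smallball}. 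A second, related omission in your plan: the scale decomposition must start at $D \gtrsim \sqrt M$, and this requires knowing that $E$ contains no compressible unit vectors (Lemmas~\ref{random_not_comp} and \ref{priori_LCD}). Your sketch has no analogue of this compressible-vectors step, and at small $D$ the ratio $\epsilon_D/D$ is an absolute constant, so the tensorized small-ball bound does not decay and the argument breaks down there. Adding an incompressibility lemma at the base scale and redefining the level sets by the LCD of unit vectors (with nets moved onto those level sets) would repair the argument and essentially reproduce the paper's proof.
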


\begin{proof}[Proof of Lemma \ref{dist_lemm}]
Consider the event $\mathcal E:= \left\{ {\rm{LCD}}_{\alpha,c}\left(E(H_{J^c})\right) \ge c\sqrt{N}e^{cN/l} \right\}$. The above theorem shows that $\mathbb P(\mathcal E) \ge 1- e^{-cN} $. Conditioning on a realization of $H_{J^c}$ in $\mathcal E$, we obtain from (\ref{reduce_to_LCD}) that
\begin{equation}
\sup_{v\in \mathbb R^N} \mathbb P\left(\left. \text{dist}(PX-v,H_{J^c})<\epsilon\sqrt{l} \right| H_{J^c} \right) \le \left({C' \epsilon}\right)^l + \left(C' \right)^l e^{-c' N}, \ \ \text{for } H_{J^c}\in \mathcal E.
\end{equation}
Since $l\le \beta N$, with an appropriate choice of $\beta$ we get 
\begin{equation*}
(C')^l \le e^{c'N/2}.
\end{equation*}
Then the proof is completed by the estimate on the probability of $\mathcal E^c$.
\end{proof}

The rest of this section is devoted to proving Theorem \ref{structure_thm}. Note that if $a\in E(H)$, then $a= P^T b$ for some $b\in H^\perp$. Then with $b= Pa$, we have that
\begin{equation*}
b\in H^\perp \Leftrightarrow \tilde Y^T P^T b - \tilde A^T b = 0 \Leftrightarrow \tilde Y^T a - \tilde A^T P a = 0.
\end{equation*}
We denote $\tilde B := \tilde A^T P$. 
For every set $S$ in $E$, we have
\begin{equation}\label{navg_E}
\inf_{x\in S} \left\|\tilde Y^T x - \tilde Bx\right\|_2 >0 \text{ implies } S \cap E=\emptyset.
\end{equation}
This helps us to ``navigate" the random subspace $E$ away from undesired sets $S$ on the unit sphere. 

As in Definition \ref{defn_compress}, we can define the compressible and incompressible vectors on $S^{M-1}$, which are denoted by $Comp_M (\delta,\rho)$ and $Incomp_M(\delta,\rho)$, respectively. First, we have the following result for compressible vectors. 

\begin{lem}[Random subspaces are incompressible]\label{random_not_comp}
There exist $\delta,\rho\in (0,1)$ such that
\begin{equation}\label{compress_space}
\mathbb P\left(E\cap Comp_M({\delta,\rho}) = \emptyset \right) \ge 1- e^{-c_0 N},
\end{equation}
where the constants $\delta, \rho, c_0>0$ depend only on $\Lambda$, $C_1$ and the maximal fourth moment.
\end{lem}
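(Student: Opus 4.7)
The plan is as follows. By the navigation principle~(\ref{navg_E}), it suffices to show that, with probability at least $1-e^{-c_0 N}$,
$$\inf_{x\in Comp_M(\delta,\rho)} \|(\tilde Y^T - \tilde B)\,x\|_2 \ge c\sqrt{N}$$
for some constants $c,c_0>0$. The argument adapts Lemma~\ref{comp_bound} to the transposed matrix $\tilde Z := \tilde Y^T - \tilde B$ acting on the $M$-sphere. Since only bounded fourth moments are available, Lemma~\ref{lem_smallball} must be replaced by an anti-concentration argument combining Lemma~\ref{lem_Paley} with the tensorization Lemma~\ref{lem_tensor}.

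For a fixed $x\in S^{M-1}$, write $(\tilde Z x)_i = T_i - (\tilde B x)_i$ with $T_i := \sum_{j=1}^M \tilde Y_{ji}\, x_j$. All entries of $\tilde Y$ are independent with mean zero, unit variance and fourth moment bounded by some $C_4$, so the $T_i$ are independent, centered, of unit variance, and a direct expansion gives $\mathbb E T_i^4 \le C_4 + 3$. Lemma~\ref{lem_Paley} then produces $\epsilon_1>0$ and $p\in(0,1)$, depending only on $C_4$, with $\mathcal L(T_i,\epsilon_1)\le p$; translation invariance of $\mathcal L$ transfers the bound to $(\tilde Z x)_i$. Part~(2) of Lemma~\ref{lem_tensor}, together with the bound $N-l\ge(1-\beta)N\ge N/2$ (valid for $\beta$ small enough), yields
$$\mathbb P\!\left(\|\tilde Z x\|_2 \le c\sqrt{N}\right) \le e^{-c_2 N}$$
for constants $c,c_2>0$ depending only on $\Lambda$ and $C_4$.

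The passage from a fixed $x$ to a uniform bound on $Comp_M(\delta,\rho)$ follows the two-step approximation in Lemma~\ref{comp_bound}: first from an $\epsilon$-net to unit vectors supported on $\lceil\delta M\rceil$ coordinates, then from sparse to compressible vectors via the decomposition $y=y_1+y_2$ with $y_1$ sparse and $\|y_2\|_2\le\rho$, using $\|\tilde Z\|\le\|\tilde Y\|+\|\tilde A\|\le C_1\sqrt{N}$ to control the errors. By Lemma~\ref{card_nets} and Stirling, the relevant net has cardinality at most $\binom{M}{\lceil\delta M\rceil}(5/\epsilon)^{\lceil\delta M\rceil}\le \bigl(C/(\delta\epsilon)\bigr)^{\delta\Lambda N}$.

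The main technical obstacle is choosing $\epsilon,\rho,\delta\in(0,1)$ in the right order so that the whole argument closes. One first fixes $\epsilon$ and $\rho$ as small multiples of $c/C_1$, so that the two approximation steps preserve the pointwise bound, and then takes $\delta$ small enough (depending on $\Lambda$, $C_1$ and $c_2$) that the entropy $\delta\Lambda N\log\!\bigl(C/(\delta\epsilon)\bigr)$ from the union bound is dominated by $c_2 N/2$. With this balancing, the union bound produces (\ref{compress_space}) with some $c_0>0$, and the three constants $\delta,\rho,c_0$ depend only on $\Lambda$, $C_1$ and the maximal fourth moment, as required.
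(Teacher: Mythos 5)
Your proof is correct and follows essentially the same route as the paper: reduce via the navigation principle~(\ref{navg_E}) to a lower bound on $\|(\tilde Y^T - \tilde B)x\|_2$ over compressible vectors, obtain the pointwise estimate by exploiting the independence of the rows $\langle\tilde Y_k, x\rangle$ of $\tilde Y^T x$ together with Lemma~\ref{lem_Paley} and part (2) of the tensorization Lemma~\ref{lem_tensor} (the paper's equation~(\ref{estimate_iid})), and then run the same two-step net approximation as in Lemma~\ref{comp_bound} using $N - l \ge (1-\beta)N$ and $M\le\Lambda N$. The only cosmetic difference is that you spell out the fourth-moment computation $\mathbb E T_i^4 \le C_4 + 3$ and the entropy-versus-probability balancing, which the paper leaves implicit by deferring to the proof of Lemma~\ref{comp_bound}.
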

\begin{proof}
Due to (\ref{navg_E}), it suffices to prove that
\begin{equation}\label{compress_lower}
\mathbb P\left( \inf_{x\in Comp_M(\delta,\rho)} \left\|\left(\tilde Y^T - \tilde B\right)x\right\|_2 \le c_0 \sqrt{N}\right) \le e^{-c_0 N}.
\end{equation}
In fact, the proof is similar to the one for Lemma \ref{comp_bound}. However, instead of Lemma \ref{lem_smallball}, we will use the fact that $\tilde Y^T$ has independent row vectors $\tilde Y_1, \ldots, \tilde Y_{N-l}$. For any $x\in S^{M-1}$, it is easy to verify that $\langle \tilde Y_k, x\rangle$ has variance 1 and uniformly bounded fourth moment. Then by Lemma \ref{lem_Paley}, there exists a $p\in (0,1)$ such that for any fixed $v=(v_1, \ldots, v_{N-l})\in \mathbb R^{N-l}$,
$$\mathbb P \left( |\langle \tilde Y_k, x\rangle - v_k| \le 1/2 \right) \le p.$$
By Lemma \ref{lem_tensor}, we can find constants $\eta,\nu \in (0,1)$ depending on $p$ only and such that 
\begin{equation}\label{estimate_iid}
\mathbb P\left\{ \|\tilde Y^Tx - v\|_2 \le \eta\sqrt{N-l}\right\} \le \nu^{N-l}.
\end{equation}
Recall that $l\le \beta N$ and $M\le \Lambda N$ by our assumptions. Then using (\ref{estimate_iid}) instead of (\ref{incomp_key}), we can complete the proof of (\ref{compress_lower}) as in Lemma \ref{comp_bound}. 
\end{proof}

Fix the constants $\delta$ and $\rho$ given by Lemma \ref{random_not_comp} for the rest of this section. Note that in contrast to the case in Lemma \ref{comp_bound}, $\delta$ is now an $N$-independent constant. We will further decompose $Incomp_M(\delta,\rho)$ into level sets $S_D$ according to the value $D$ of the LCD. We shall prove a nontrivial lower bound on $\inf_{x\in S_D} \|(\tilde Y^T - \tilde B)x\|_2$ for each level set up to $D$ of the exponential order. By (\ref{navg_E}), this means that $E$ is disjoint from every such level set. Therefore, $E$ must have exponentially large LCD. First, as a consequences of Lemma \ref{lem_spread}, we have the following lemma, which gives a weak lower bound for the LCD. 
\begin{lem}[Lemma 3.6 of \cite{RudVersh_rect}]\label{priori_LCD}
For every $\delta,\rho\in (0,1)$, there exist $c_1(\delta,\rho)>0$ and $c_2(\delta)>0$ such that the following holds. Let $a\in Incomp_M(\delta,\rho)$. Then for every $0< c <c_1(\delta,\rho)$ and every $\alpha>0$, one has
$${\rm{LCD}}_{\alpha,c}(a)>c_2(\delta)\sqrt{M}.$$
\end{lem}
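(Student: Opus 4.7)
The plan is to exploit the spread property of incompressible vectors (Lemma \ref{lem_spread}, applied with $n=M$) to give an unconditional lower bound on $\mathrm{dist}(\theta a, \mathbb Z^M)$ for all moderately small $\theta>0$. This will force the infimum in the definition of ${\rm{LCD}}_{\alpha,c}(a)$ to exceed a constant multiple of $\sqrt{M}$, provided $c$ is chosen smaller than an explicit constant depending only on $\delta$ and $\rho$.

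First, I would fix $a\in Incomp_M(\delta,\rho)$ and invoke Lemma \ref{lem_spread} to produce a set $\sigma\subseteq\{1,\dots,M\}$ with $|\sigma|\ge \tfrac{1}{2}\rho^{2}\delta M$ and
\begin{equation*}
\frac{\rho}{\sqrt{2M}}\le |a_{k}|\le \frac{1}{\sqrt{\delta M}},\qquad k\in\sigma.
\end{equation*}
Set $c_{2}(\delta):=\tfrac{1}{2}\sqrt{\delta}$ and consider any $\theta\le c_{2}(\delta)\sqrt{M}$. Then for every $k\in\sigma$ we have $|\theta a_{k}|\le \theta/\sqrt{\delta M}\le \tfrac{1}{2}$, which means that the nearest integer to $\theta a_{k}$ is $0$, so $\mathrm{dist}(\theta a_{k},\mathbb Z)=|\theta a_{k}|$.

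With this observation in hand, the second step is a direct estimate:
\begin{equation*}
\mathrm{dist}(\theta a,\mathbb Z^{M})^{2}\ge \sum_{k\in\sigma}(\theta a_{k})^{2}\ge |\sigma|\cdot\theta^{2}\cdot\frac{\rho^{2}}{2M}\ge \frac{\rho^{4}\delta}{4}\,\theta^{2}.
\end{equation*}
Since $\|a\|_{2}=1$, this rearranges to $\mathrm{dist}(\theta a,\mathbb Z^{M})\ge \tfrac{1}{2}\rho^{2}\sqrt{\delta}\,\|\theta a\|_{2}$. Defining $c_{1}(\delta,\rho):=\tfrac{1}{2}\rho^{2}\sqrt{\delta}$, any $c<c_{1}(\delta,\rho)$ gives
\begin{equation*}
\mathrm{dist}(\theta a,\mathbb Z^{M}) > c\,\|\theta a\|_{2}\ge \min(c\|\theta a\|_{2},\alpha)
\end{equation*}
for all $\alpha>0$ and all $\theta\in(0,c_{2}(\delta)\sqrt{M}]$. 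Hence no such $\theta$ can achieve the defining inequality of ${\rm{LCD}}_{\alpha,c}(a)$, which forces ${\rm{LCD}}_{\alpha,c}(a)>c_{2}(\delta)\sqrt{M}$.

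This argument is essentially mechanical once one has Lemma \ref{lem_spread}; I do not anticipate a real obstacle. The only point that requires minor care is ensuring that $|\theta a_{k}|\le \tfrac{1}{2}$ on the spread set $\sigma$ so that distances to $\mathbb Z$ are computed trivially, and the choice $c_{2}(\delta)=\tfrac{1}{2}\sqrt{\delta}$ is tailored precisely for this. Note that $c_{2}$ depends only on $\delta$ (not on $\rho$), as stated, while $c_{1}$ naturally depends on both $\delta$ and $\rho$ through the lower bound on $|a_{k}|$ over the spread set.
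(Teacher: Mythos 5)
Your proof is correct and follows the standard argument for Lemma 3.6 of \cite{RudVersh_rect}, which the paper cites without reproducing a proof: use the spread property (Lemma \ref{lem_spread}) to locate a large set $\sigma$ of coordinates of $a$ with magnitudes in $[\rho/\sqrt{2M},\,1/\sqrt{\delta M}]$, choose $c_2(\delta)$ so that $|\theta a_k|\le 1/2$ on $\sigma$ whenever $\theta\le c_2(\delta)\sqrt M$ (so that the distance to $\mathbb Z$ is $|\theta a_k|$ itself), and then sum over $\sigma$ to get $\operatorname{dist}(\theta a,\mathbb Z^M)\ge \tfrac12\rho^2\sqrt\delta\,\|\theta a\|_2$, which rules out the defining inequality of $\operatorname{LCD}_{\alpha,c}(a)$ for all $c<c_1(\delta,\rho):=\tfrac12\rho^2\sqrt\delta$ and all $\alpha>0$. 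The dependencies $c_2=c_2(\delta)$ and $c_1=c_1(\delta,\rho)$ emerge exactly as claimed, and the chain $\operatorname{dist}(\theta a,\mathbb Z^M)>c\|\theta a\|_2\ge\min(c\|\theta a\|_2,\alpha)$ correctly handles both branches of the minimum.
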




\begin{defn}[Level sets]\label{level_sets}
Let $D\ge c_2(\delta) \sqrt{M}$. Define $S_D\subseteq S^{M-1}$ as
$$S_D:= \{x\in Incomp_M(\delta,\rho): D\le {\rm{LCD}}_{\alpha,c}(x) < 2D\}\cap \left(P^T \mathbb R^N\right).$$
\end{defn}

To obtain a lower bound for $\|(\tilde Y^T - \tilde B)x\|_2$ on $S_D$, we use the $\epsilon$-net argument again. We first need such a bound for a single vector $x$. The proof of next lemma is very similar to the one for Lemma 4.6 in \cite{RudVersh_rect}. We omit the details.

\begin{lem}\label{structure_single}
Let $x\in S_D$. Then for every $t>0$ we have
\begin{equation}\label{lem_dom_t}
\mathbb P\left(\|(\tilde Y^T  - \tilde B) x\|_2 < t\sqrt{N}\right) \le \left( Ct + \frac{C}{D} + Ce^{-c\alpha^2}\right)^{N-l}.
\end{equation}
\end{lem}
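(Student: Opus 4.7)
The strategy I would follow is to decompose $\|(\tilde Y^T - \tilde B)x\|_2^2$ into a sum of squares of $N-l$ independent scalar quantities, apply the scalar small ball estimate (Theorem \ref{thm_smallball}) to each summand, and then tensorize. Since the entries of $\tilde Y$ are mutually independent, so are the rows $R_1,\ldots,R_{N-l}$ of $\tilde Y^T$; writing $B_k$ for the $k$-th row of $\tilde B$ and setting $Z_k := \langle R_k, x\rangle - \langle B_k, x\rangle$, the $Z_k$ are independent real-valued random variables and $\|(\tilde Y^T - \tilde B)x\|_2^2 = \sum_{k=1}^{N-l} Z_k^2$.

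For each fixed $k$, the random variable $Z_k$ is a linear combination $\sum_{j=1}^M x_j \xi_j^{(k)}$ of i.i.d. centered, unit-variance, finite fourth moment entries, shifted by the deterministic scalar $-\langle B_k,x\rangle$. Lemma \ref{lem_Paley} supplies the anti-concentration input $\mathcal L(\xi_j^{(k)}, 1) \le 1-b$ for some $b>0$ depending only on the fourth moment. I would then specialize Theorem \ref{thm_smallball} to dimension $l=1$ with the coefficient sequence $a := x \in \mathbb R^M$; condition (\ref{isotropic_x}) reduces in this case to $\|x\|_2^2 \ge 1$, which holds with equality. Since $x \in S_D$ forces $\text{LCD}_{\alpha,c}(x) \ge D$, Theorem \ref{thm_smallball} gives $\mathcal L(Z_k, \epsilon) \le C\epsilon + Ce^{-c\alpha^2}$ for every $\epsilon \ge 1/D$. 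The monotonicity $\mathcal L(Z_k, \epsilon) \le \mathcal L(Z_k, 1/D)$ then extends this to the uniform bound
$$\mathcal L(Z_k, \epsilon) \le C\epsilon + C/D + Ce^{-c\alpha^2} \quad \text{for every } \epsilon \ge 0.$$

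It remains to tensorize over the $N-l$ independent coordinates. Because the pointwise estimate carries the additive constant $q := C/D + Ce^{-c\alpha^2}$ on top of the linear-in-$\epsilon$ term, Lemma \ref{lem_tensor}(1) does not apply verbatim. I would instead run the Laplace-transform proof of tensorization directly: Markov's inequality gives $\mathbb P\big(\sum_k Z_k^2 < t^2(N-l)\big) \le e^{N-l}\prod_k \mathbb E e^{-Z_k^2/t^2}$, and the layer-cake identity $\mathbb E e^{-Z_k^2/t^2} = \int_0^\infty 2s e^{-s^2}\mathbb P(|Z_k| < ts)\,ds$, combined with the scalar bound from the previous paragraph, yields $\mathbb E e^{-Z_k^2/t^2} \le C(t + 1/D + e^{-c\alpha^2})$. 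Multiplying over $k$ produces $(Ct + C/D + Ce^{-c\alpha^2})^{N-l}$ with updated constants. Finally, converting the threshold from $t^2(N-l)$ to $t^2 N$ in the target (\ref{lem_dom_t}) is automatic: since $l \le \beta N$ from Lemma \ref{dist_lemm}, the factor $\sqrt{N/(N-l)}$ is bounded by an absolute constant, which can be absorbed into $C$ after a trivial rescaling of $t$.

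The step I expect to be most delicate is exactly this tensorization with an additive constant. The Laplace-transform route above is robust and yields the desired additive structure $Ct + C/D + Ce^{-c\alpha^2}$ inside the $k$-th factor cleanly; a naive Markov/counting split (many $k$ with $|Z_k|$ small) would instead produce a square-root-type factor and lose the optimal functional dependence. Once this is handled, every other ingredient is essentially a direct application of Theorem \ref{thm_smallball} and Lemma \ref{lem_Paley}.
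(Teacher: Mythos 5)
Your proof is correct and follows essentially the same route as the paper's reference (Lemma 4.6 of \cite{RudVersh_rect}): decompose $\|(\tilde Y^T - \tilde B)x\|_2^2$ into a sum of $N-l$ independent scalar squares, apply Theorem \ref{thm_smallball} with $l=1$ and coefficient sequence $a=x$ using $\mathrm{LCD}_{\alpha,c}(x)\ge D$, and tensorize. One small clarification: Lemma \ref{lem_tensor}(1) actually does apply verbatim here---set $\epsilon_0 := (C/D + Ce^{-c\alpha^2})/C$ so that for $\epsilon\ge\epsilon_0$ the additive term is dominated by $C\epsilon$, giving $\mathbb P(|Z_k|<\epsilon)\le 2C\epsilon$, and handle $\epsilon<\epsilon_0$ by monotonicity---so the Laplace-transform rederivation you give, while correct, is not needed.
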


Now we construct a small $\epsilon$-net of $S_D$. 
Our argument here is a little harder than the one in \cite{RudVersh_rect}, because the $\epsilon$-net lies in a subspace $P^T \mathbb R^N \subseteq \mathbb R^M$, whose direction is quite arbitrary. We shall need the following classical result in geometric functional analysis \cite{Ball}.

\begin{lem}\label{ball}
If $S\subseteq \mathbb R^M$ is a subspace of codimension $k$, then
\[\left|S\cap Q_M\right| \le (\sqrt{2})^{k},\]
where $Q_M = [-1/2,1/2]^M$ is the unit cube centered at the origin.
\end{lem}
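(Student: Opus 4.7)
The inequality is Keith Ball's cube-slicing bound: every central codimension-$k$ section of the unit cube in $\mathbb{R}^M$ has $(M-k)$-dimensional volume at most $(\sqrt{2})^k$. My plan is to follow Ball's Fourier-analytic proof, which proceeds in three steps.

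First, I would express the slice volume via a Fourier/Plancherel identity. Since the indicator $\mathbf{1}_{[-1/2,1/2]}$ has Fourier transform (in the appropriate normalization) equal to the sinc function, and $\mathbf{1}_{Q_M}$ factorizes as a tensor product, applying Plancherel on $S^\perp$ yields
$$|S \cap Q_M| = \int_{S^\perp} \prod_{i=1}^M \mathrm{sinc}\!\left(\pi \langle x, e_i\rangle\right) dx,$$
where the integration is with respect to the $k$-dimensional Lebesgue measure on $S^\perp$. Taking absolute values provides an upper bound on the left-hand side.

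Second, I would trivialize the geometry by choosing an orthonormal basis $u_1, \ldots, u_k$ for $S^\perp$ and identifying $S^\perp \cong \mathbb{R}^k$. Letting $a_i \in \mathbb{R}^k$ have components $(a_i)_j := \langle u_j, e_i\rangle$, the isometry identity $\sum_j u_j u_j^T = P_{S^\perp}$ translates into the rank-one decomposition $\sum_{i=1}^M a_i a_i^T = I_k$; in particular $\sum_i \|a_i\|^2 = k$. The slice volume is then bounded by
$$\int_{\mathbb{R}^k} \prod_{i=1}^M \left|\mathrm{sinc}\!\left(\pi \langle a_i, y\rangle\right)\right| dy.$$

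Third, I would combine the rank-one Brascamp--Lieb (Barthe) inequality with Ball's scalar estimate $\int_{\mathbb{R}} |\mathrm{sinc}(\pi t)|^p dt \le \sqrt{2/p}$ for $p \ge 2$. Setting $c_i = \|a_i\|^2$ and $u_i = a_i/\|a_i\|$, one has $\sum_i c_i u_i u_i^T = I_k$. Writing the integrand as $\prod_i g_i(\langle u_i, y\rangle)^{c_i}$ with $g_i(s) := |\mathrm{sinc}(\pi \|a_i\| s)|^{1/c_i}$, the Brascamp--Lieb inequality yields $\int \prod g_i^{c_i} \le \prod_i (\int g_i)^{c_i}$. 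A one-dimensional change of variables gives $\int g_i = \|a_i\|^{-1} \int |\mathrm{sinc}(\pi t)|^{1/c_i} dt \le \|a_i\|^{-1}\sqrt{2 c_i} = \sqrt{2}$, and multiplying out produces $\prod_i (\sqrt 2)^{c_i} = (\sqrt 2)^{\sum_i c_i} = (\sqrt 2)^k$.

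The main obstacle is the regime where some $c_i = \|P_{S^\perp} e_i\|^2$ exceeds $1/2$: then $1/c_i < 2$ and Ball's sharp one-dimensional inequality does not apply directly. This is the technical heart of Ball's original paper and is handled either by a separate moment bound valid for $p \in (1,2)$, or by reorganizing the rank-one decomposition (splitting high-weight coordinates) so that the effective exponents fed into the scalar inequality are all at least $2$. With this case addressed, the three-step program above goes through and yields the asserted bound $|S \cap Q_M| \le (\sqrt 2)^k$.
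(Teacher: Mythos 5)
The paper does not prove this lemma: it cites it as a classical result of Ball \cite{Ball}, so there is no in-paper argument to compare against. Your sketch is a recognizable reconstruction of Ball's Fourier-analytic proof. The Fourier-inversion formula for central sections, the frame identity $\sum_i a_i a_i^T = I_k$ with $a_i = P_{S^\perp}e_i$ and $\sum_i c_i = k$, the geometric Brascamp--Lieb inequality, and the sharp scalar bound $\int_{\mathbb R}\bigl|\frac{\sin(\pi t)}{\pi t}\bigr|^p\,dt\le\sqrt{2/p}$ for $p\ge 2$ (equality at $p=2$) are indeed the right ingredients, and the bookkeeping $\prod_i(\sqrt2)^{c_i}=(\sqrt2)^{\sum_i c_i}=(\sqrt2)^k$ is correct.

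The regime $c_i>1/2$ is a genuine gap, and neither of your proposed patches closes it. There is no "separate moment bound for $p\in(1,2)$" of the required form: for $1<p<2$ one has $\int_{\mathbb R}\bigl|\frac{\sin(\pi t)}{\pi t}\bigr|^p\,dt>\sqrt{2/p}$ (the left side diverges as $p\to 1^+$ while the right side stays near $\sqrt 2$), and the strictness of Ball's bound for $p>2$ does not provide compensating slack elsewhere in the product. Duplicating a heavy direction $c_i u_i u_i^T$ into two copies of the same $u_i$ does not help either: the factorization constraint $g_{i,1}^{c_{i,1}}g_{i,2}^{c_{i,2}}=\bigl|\frac{\sin(\pi\langle a_i,y\rangle)}{\pi\langle a_i,y\rangle}\bigr|$ with $c_{i,1}+c_{i,2}=c_i$ forces the effective one-dimensional exponents to average $1/c_i<2$, so at least one stays below $2$. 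Ball's actual resolution is an induction on the ambient dimension $M$. If $c_i=\|P_{S^\perp}e_i\|^2>1/2$, then $e_i\notin S$, so coordinate projection $\pi$ along $e_i$ onto $\{x_i=0\}\cong\mathbb R^{M-1}$ restricts to an injection on $S$ with Jacobian $\sqrt{c_i}$ (the Gram determinant of the projected orthonormal basis of $S$ is $1-\|P_S e_i\|^2=c_i$). Hence $\lvert S\cap Q_M\rvert = c_i^{-1/2}\,\lvert\pi(S\cap Q_M)\rvert \le c_i^{-1/2}\,\lvert S'\cap Q_{M-1}\rvert$, where $S'=\pi(S)$ has codimension $k-1$ in $\mathbb R^{M-1}$, and the inductive hypothesis together with $c_i^{-1/2}<\sqrt2$ gives the bound $(\sqrt2)^k$. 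Inserting that inductive step in place of your placeholder makes the argument complete.
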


\begin{lem}\label{net_counting}
There exists a $(4\alpha/D)$-net of $S_D$ of cardinality at most $(CD/\sqrt{N})^N$.
\end{lem}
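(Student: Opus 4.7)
The plan is to exploit the isometric embedding $P^T\colon \mathbb R^N \to \mathbb R^M$ (whose image is $P^T\mathbb R^N$, since $PP^T = I$) and reduce the net construction to counting integer points in a tube around this $N$-dimensional subspace.

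First, I would unpack the LCD condition. Take $x \in S_D \subseteq P^T\mathbb R^N$; by Definition \ref{level_sets} there exist $\theta \in [D, 2D)$ and $p \in \mathbb Z^M$ with $\|\theta x - p\|_2 < \alpha$. Because $\theta x \in P^T\mathbb R^N$, the integer point $p$ automatically lies in the tube
$$T := \bigl\{p \in \mathbb Z^M : \mathrm{dist}(p, P^T\mathbb R^N) < \alpha, \ \|\Pi p\|_2 < 2D + \alpha\bigr\}, \qquad \Pi := P^T P.$$
I would then take the candidate net element $\tilde x_p := \Pi p / \|\Pi p\|_2 \in S^{M-1} \cap P^T\mathbb R^N$. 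Since $\Pi$ fixes $\theta x$, we have $\|\Pi p - \theta x\|_2 \le \|p - \theta x\|_2 < \alpha$, so $\|\Pi p\|_2 \in [\theta - \alpha, \theta + \alpha] \subseteq [D/2, 3D]$ (Lemma \ref{priori_LCD} guarantees $D \gtrsim \sqrt M \gg \alpha = c\sqrt N$). The elementary inequality $\|a/\|a\|_2 - b/\|b\|_2\|_2 \le 2\|a-b\|_2/\max\{\|a\|_2, \|b\|_2\}$, applied with $a = \Pi p$ and $b = \theta x$, then yields $\|\tilde x_p - x\|_2 \le 2\alpha/\theta \le 2\alpha/D$.

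Second, the main task is to bound $|T|$. A volume argument works: the unit cubes $p + Q_M$ centered at distinct $p \in T$ are disjoint and contained in the inflated tube
$$T' := T + Q_M \subseteq \bigl\{q \in \mathbb R^M : \mathrm{dist}(q, P^T\mathbb R^N) < \alpha + \tfrac{\sqrt M}{2}, \ \|\Pi q\|_2 < 2D + \alpha + \tfrac{\sqrt M}{2}\bigr\},$$
so $|T| \le \mathrm{vol}(T')$. The latter factors as an $N$-dimensional ball of radius $O(D)$ inside $P^T\mathbb R^N$ times an $(M-N)$-dimensional ball of radius $O(\sqrt N)$ in the orthogonal complement, giving $(CD/\sqrt N)^N \cdot (C\sqrt{N/(M-N)})^{M-N}$ by Stirling. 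Using $M \le \Lambda N$ and $D \ge c_2(\delta)\sqrt N$, the second factor is at most $C_\Lambda^N$ and can be absorbed into a redefinition of $C$. This is exactly the step where Lemma \ref{ball} enters cleanly: it bounds the $N$-dimensional section of the $M$-cube by the arbitrary subspace $P^T\mathbb R^N$ directly, controlling the tube cross-sections without leaking a codimension-dependent factor.

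Third, since $\mathcal N' := \{\tilde x_p : p \in T\}$ lies in $S^{M-1} \cap P^T\mathbb R^N$ but perhaps not in $S_D$, I convert it into an actual $S_D$-net by replacing each $\tilde x_p$ with a closest point of $S_D$ within distance $2\alpha/D$ (discarding $\tilde x_p$ if no such point exists). By the triangle inequality the resulting $\mathcal N \subseteq S_D$ has mesh at most $4\alpha/D$, and $|\mathcal N| \le |\mathcal N'| \le (CD/\sqrt N)^N$. The main obstacle is precisely the tube-counting step: the tube is aligned with an arbitrary $N$-dimensional subspace, so standard coordinate slicing is useless, and Ball's cube-section inequality (Lemma \ref{ball}) is the intrinsic tool that makes the integer-point count reflect the correct $N$-dimensional volume rather than inheriting a $C^{M-N}$ slack from the ambient dimension.
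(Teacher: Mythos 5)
Your construction is correct and closely parallels the paper's: both extract an integer point $p$ near $\theta x$ with $\theta = D(x) \in [D,2D)$, take a normalized version as the net candidate, reduce the cardinality count to a lattice-point count in a tube of width $O(\alpha)$ around $P^T\mathbb R^N$ inside a ball of radius $O(D)$, and finally pass to a net lying inside $S_D$. Your choice $\tilde x_p = \Pi p/\|\Pi p\|_2$ (projecting first, then normalizing) is a small refinement over the paper's $p/\|p\|_2$, and your use of the elementary normalization inequality gives the required $2\alpha/\theta \le 2\alpha/D$ mesh.

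The genuine difference is in how the tube volume is bounded. The paper decomposes $|B(0,3D)\cap\tilde F|$ by Fubini into an $N$-dimensional factor $|B(0,3D)\cap S|$ times a transverse factor which it identifies with the cube section $|S^\perp\cap Q_M|$ and controls via Ball's inequality (Lemma \ref{ball}), giving $(\sqrt2)^N$. You instead enclose the inflated tube $T+Q_M$ directly in a product of a ball of radius $O(D)$ in $S$ and a ball of radius $O(\sqrt N)$ in $S^\perp$, and evaluate both factors by Stirling; the transverse factor becomes $(C\sqrt{N/(M-N)})^{M-N}$, which you then absorb into $C_\Lambda^N$ using $M\le\Lambda N$. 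Both routes yield $(CD/\sqrt N)^N$. Your route is more elementary (no convex geometry) and also has the minor virtue of sidestepping a projection-versus-section subtlety in the paper's Fubini step: the transverse extent of $F+Q_M$ is really $P_{S^\perp}(Q_M)$, not $S^\perp\cap Q_M$, and the former can be larger; your ball enclosure bounds it cleanly, while the price is the extra $C_\Lambda^N$, which you correctly argue is affordable.

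One thing to fix: your narrative repeatedly asserts that ``Lemma \ref{ball} enters cleanly'' and is ``the intrinsic tool'' for the integer-point count, but your argument as written never invokes Lemma \ref{ball} at all --- the tube count is handled entirely by the ball-volume/Stirling estimate, which deliberately \emph{accepts} the $C^{M-N}$ slack and then shows it is harmless. These remarks describe the paper's proof, not yours, and moreover slightly misstate what Ball's lemma gives (it bounds the $(M-N)$-dimensional section $S^\perp\cap Q_M$ by $(\sqrt2)^N$, not ``the $N$-dimensional section by $P^T\mathbb R^N$''). The mathematics you actually carried out is sound; the closing commentary should be rewritten to match it.
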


\begin{proof}
We can assume that $4\alpha/D\le 1$, otherwise the conclusion is trivial. For $x\in S_D$, we denote $D(x):={\rm{LCD}}_{\alpha,c}(x).$ By the definition of $S_D$, we have $D\le D(x) < 2D$. By the definition of LCD, there exists $p\in \mathbb Z^M$ such that
\begin{equation}\label{NoLS1}
\|D(x)x-p\|_2 < \alpha.
\end{equation}
Therefore,
$$\left\|x- \frac{p}{D(x)}\right\| < \frac{\alpha}{D(x)} \le \frac{1}{4}.$$
Since $\|x\|_2 = 1$, it follows that 
\begin{equation*}
\left\| x - \frac{p}{\|p\|_2}\right\|_2 \le \frac{2\alpha}{D}.
\end{equation*}
We can chose $p$ such that it is the closest integer point to $D(x)x$. Since $\|D(x)x\|_2< 2D$, $p$ must lie in the ``cube covering" $\tilde F$ of $F:=B(0,2D)\cap P^T \mathbb R^N$, defined as
\begin{equation*}
\tilde F := \bigcup\limits_{b\in F}\left(\prod\limits_{i=1}^M[b_i-1/2, b_i+1/2]\right).
\end{equation*}
On the other hand, by (\ref{NoLS1}) and using that $\|D(x)x\|_2< 2D$ and $4\alpha/D\le 1$, we obtain
\begin{equation*}
\|p\|_2<D(x)+\alpha\le 3D.
\end{equation*}
In sum, we get a $(2\alpha/D)$-net of $S_D$ as:
$$\mathcal N := \left\{\frac{p}{\|p\|_2}:p\in\mathbb Z^M\cap B(0, 3D)\cap\tilde F\right\}.$$
The cardinality of $\mathcal N$ can be bounded by the volume of $B(0, 3D)\cap\tilde F$. 
By Fubini's theorem, we have
$$\left|B(0, 3D)\cap\tilde F\right| \le \left|B(0,3D)\cap S\right| \cdot \left|S^\perp\cap Q_M\right|, \ \ S:=P^T \mathbb R^N.$$
Then using the volume formula for an $N$-dimension ball and Lemma \ref{ball}, we obtain that
\[|\mathcal N| \le (CD/\sqrt{N})^N.\]
Finally, we can find a $4\alpha/D$-net of the same cardinality, which lies in $S_D$ (see Lemma 5.7 of \cite{RudVersh_square}). This completes the proof.
\end{proof}

\begin{lem}\label{lower_bounds}
There exist $c_3, c_4, \mu\in(0,1)$ such that the following holds. Let $\alpha=\mu\sqrt{N}\ge 1$ and $D\le c_3\sqrt N e^{c_3N/l}$. Then
$$\mathbb P \left( \inf_{x\in S_D} \left\|\left(\tilde Y^T - \tilde B\right) x\right\|_2 < c_4 N/D\right)\le e^{-N}.$$
\end{lem}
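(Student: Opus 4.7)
The plan is the standard three-step $\epsilon$-net argument: single-vector small-ball bound, union bound over a net, and approximation. Fix parameters $\mu, c_3, c_4$ (to be tuned at the end), set $\alpha = \mu\sqrt{N}$, and let $D\le c_3\sqrt{N}e^{c_3 N/l}$. The auxiliary net is delivered by Lemma~\ref{net_counting}: there exists a $(4\alpha/D)$-net $\mathcal N\subseteq S_D$ with $|\mathcal N|\le (C_0 D/\sqrt N)^N$. The single-vector bound is supplied by Lemma~\ref{structure_single}: for each fixed $y\in\mathcal N$ and $t>0$,
\[
\mathbb P\bigl(\|(\tilde Y^T-\tilde B)y\|_2 < t\sqrt{N}\bigr)\le \bigl(C t + C/D + Ce^{-c\mu^2 N}\bigr)^{N-l}.
\]
Since $D\ge c_2\sqrt{M}\ge c_2\sqrt N$, for $N$ sufficiently large the middle term is dominated by $Ct$ once we set $t=2c_4\sqrt{N}/D$, and the bound reduces to $(C_0 c_4\sqrt{N}/D + C_0 e^{-c\mu^2 N})^{N-l}$.

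For the approximation step, suppose $x_0\in S_D$ satisfies $\|(\tilde Y^T-\tilde B)x_0\|_2 < c_4 N/D$, and choose $y\in\mathcal N$ with $\|x_0-y\|_2\le 4\alpha/D = 4\mu\sqrt{N}/D$. Using $\|\tilde Y^T-\tilde B\|\le C_1\sqrt{N}$, the triangle inequality gives
\[
\|(\tilde Y^T-\tilde B)y\|_2 \le \frac{c_4 N}{D} + C_1\sqrt{N}\cdot \frac{4\mu\sqrt{N}}{D} = \frac{(c_4+4C_1\mu)N}{D}\le \frac{2c_4 N}{D},
\]
provided $\mu\le c_4/(4C_1)$. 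Therefore the desired event is contained in the event that some $y\in\mathcal N$ satisfies $\|(\tilde Y^T-\tilde B)y\|_2 < 2c_4 N/D$. Taking the union bound and using $(a+b)^{N-l}\le 2^{N-l}(a^{N-l}+b^{N-l})$, the probability is bounded by the sum of
\[
T_1 := (C_0 D/\sqrt N)^N \cdot (2C_0 c_4\sqrt N/D)^{N-l}, \qquad T_2 := (C_0 D/\sqrt N)^N \cdot (2C_0 e^{-c\mu^2 N})^{N-l}.
\]

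The main technical point is to verify that the choice $D\le c_3\sqrt{N}e^{c_3 N/l}$ does not overwhelm the gains from $c_4$ and $e^{-c\mu^2 N}$. For $T_1$, collecting powers of $D/\sqrt N$ yields $(D/\sqrt N)^l\le c_3^l e^{c_3 N}$, hence
\[
T_1 \le (2C_0 c_4)^{N-l}\,C_0^l\,c_3^l\,e^{c_3 N}\le (2C_0 c_4)^{(1-\beta)N}\,(C_0 c_3)^{\beta N}\,e^{c_3 N},
\]
using $l\le \beta N$ from Lemma~\ref{dist_lemm}. Taking $c_4$ small enough forces $T_1\le e^{-N}/2$. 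For $T_2$, the analogous bookkeeping gives $(D/\sqrt N)^N\le c_3^N e^{c_3 N^2/l}$, so
\[
T_2 \le C_1^N\,\exp\!\bigl(c_3 N^2/l - c\mu^2(1-\beta)N^2\bigr)\le C_1^N\,\exp\!\bigl(-\tfrac{1}{2}c\mu^2(1-\beta)N^2\bigr),
\]
where we used $l\ge 1$ (so $c_3/l\le c_3$) and chose $c_3\le \tfrac12 c\mu^2(1-\beta)$. For $N$ large the $N^2$-term dominates and $T_2\le e^{-N}/2$. The hard part will be checking that all three parameters $(\mu, c_4, c_3)$ can be tuned simultaneously: $\mu$ must be small (to make the approximation cheap and to control $e^{-c\mu^2 N}$), yet not too small since $c_3\lesssim \mu^2$ and we still need $c_3\sqrt N e^{c_3 N/l}\ge c_2\sqrt M$ for $S_D$ to be nonempty; this is arranged by first fixing $\mu$ small enough relative to $C_1$, then fixing $c_4=4C_1\mu$, and finally fixing $c_3$ strictly below $\tfrac12 c\mu^2(1-\beta)$. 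Combining $T_1+T_2\le e^{-N}$ completes the proof.
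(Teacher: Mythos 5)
Your proposal is correct and follows the same three-step $\epsilon$-net strategy as the paper: Lemma~\ref{structure_single} for the single-vector small-ball bound, Lemma~\ref{net_counting} for the net, and a triangle-inequality approximation step under $\mu\lesssim c_4/C_1$. The only structural difference is cosmetic: the paper observes that, under the standing assumptions on $\alpha$ and $D$ (with $c_3\lesssim c\mu^2$), the term $Ct$ already dominates both $C/D$ and $Ce^{-c\alpha^2}$ in (\ref{lem_dom_t}), so the union-bound computation reduces to a single product $(CD/\sqrt N)^N\,(C\nu\sqrt N/D)^{N-l}\le (CD/\sqrt N)^l(C'\nu)^{N-l}$, whereas you split into $T_1$ and $T_2$ and absorb the $e^{-c\mu^2 N}$ piece separately. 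Both routes work; the paper's is a bit more economical. One small bookkeeping slip in your $T_1$ bound: expanding $(C_0 D/\sqrt N)^N(2C_0 c_4\sqrt N/D)^{N-l}$ gives $C_0^N(2C_0c_4)^{N-l}(D/\sqrt N)^l$, so the residual factor is $C_0^N$ rather than your $C_0^l$; this does not break the argument (it is absorbed by taking $c_4$ smaller), but the exponent should be corrected. Finally, the worry that $c_3$ must be large enough to keep $S_D$ nonempty is unnecessary: if the dyadic range for $D$ is empty the estimate is vacuous, so only the upper constraint $c_3\lesssim \min(1,\,c\mu^2)$ matters.
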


\begin{proof}
To conclude the proof, it is enough to find $\nu >0$ such that the event 
$$\mathcal E:=\left\{ \inf_{x\in S_D} \left\|\left(\tilde Y^T - \tilde B\right)x\right\|_2 < \frac{\nu N}{2D}\right\}$$
has probability $\le e^{-N}$. Let $\nu >0$ be a small constant to be chosen later. We apply Lemma \ref{structure_single} with $t=\nu\sqrt{N}/D$. By the assumptions on $\alpha$ and $D$, the term $Ct$ dominates in the right hand side of (\ref{lem_dom_t}). This gives for arbitrary $x \in S_D$,
$$\mathbb P\left(\left\|\left(\tilde Y^T - \tilde B\right)x\right\|_2 < \frac{\nu N}{D}\right) \le \left( \frac{C\nu \sqrt{N}}{D}\right)^{N-l}.$$
We take the $(4\alpha/D)$-net $\mathcal N$ of $S_D$ given by Lemma \ref{net_counting}, and take the union bound to get
$$p:=\mathbb P \left( \inf_{x\in \mathcal N} \left\|\left(Y^T - B\right)x \right\|_2 < \frac{\nu N}{D}\right)\le \left(\frac{CD}{\sqrt{N}}\right)^N \left( \frac{C\nu \sqrt{N}}{D}\right)^{N-l} \le \left(\frac{CD}{\sqrt{N}}\right)^l \left( C'\nu\right)^{N-l} .$$
Using the assumption on $D$, we can choose $\nu$ small enough such that
$$p \le \left(C''\right)^l e^{c_3N} \left( C'\nu\right)^{N-l} \le e^{-N},$$
where we used $l \le \beta N$ in the last step.

Now assume $\mathcal E$ holds. By the assumption of Theorem \ref{structure_thm}, we have 
$$\|\tilde Y^T - \tilde B\| \le \|\tilde Y\| + \|\tilde A\| \le C_1\sqrt{N}.$$ 
Fix $x\in S_D$ such that $\|(\tilde Y^T - \tilde B)x\| < \nu N/(2D)$. Then we can find $y\in \mathcal N$ such that $\|x-y\| \le {4\alpha}/{D}.$ Then, by the triangle inequality we have
\begin{align*}
\left\|\left(\tilde Y^T - \tilde B\right)y\right\|_2 & \le \left\|\left(\tilde Y^T -\tilde B\right)x\right\|_2 + \left\|\tilde Y^T-\tilde B\right\|\cdot \|x-y\|_2  \le \frac{\nu N}{2D} + C_1 \sqrt{N}\frac{4\mu\sqrt{N}}{D} <\frac{\nu N}{D},
\end{align*}
if we choose $\mu < \nu /(8C_1)$. Thus we get
$$\mathbb P(\mathcal E) \le \mathbb P \left( \inf_{x\in \mathcal N} \left\|\left(Y^T - B\right)x \right\|_2 < \frac{\nu N}{D}\right)\le e^{-N},$$
which concludes the proof.
\end{proof}

\begin{proof}[Proof of Theorem \ref{structure_thm}]
Consider $x\in S^{M-1}\cap E$ such that
$${\rm{LCD}}_{\alpha, c}(x) < c_3 \sqrt N e^{c_3N/l}.$$
Then, by Lemma \ref{priori_LCD} and Definition \ref{level_sets}, either $x$ is compressible or $x\in S_D$ for some $D\in\mathcal D$, where 
$$\mathcal D:=\left\{D:c_2\sqrt N \le D<c_3\sqrt{N} e^{c_3 N/l}, D=2^k, k\in\mathbb N \right\},$$
where we used that $M\ge N$. Therefore, we can decompose the desired probability as follows:
\begin{align*}
p:=\mathbb P\left({\rm{LCD}}_{\alpha, c}(E)<c_3 \sqrt N e^{c_3 N/l}\right) \le \mathbb P\left(E\cap Comp_M({\delta,\rho}) \ne \emptyset \right) + \sum_{D\in\mathcal D}\mathbb P(E\cap S_D\ne \emptyset).
\end{align*}
The first term can be bounded by $e^{-c_0 N}$ by Lemma \ref{random_not_comp}. 
The other terms can be bounded with (\ref{navg_E}) and Lemma \ref{lower_bounds}:
\[\mathbb P(E\cap S_D\ne \emptyset)\le \mathbb P\left(\inf_{x\in S_D} \left\|\left(\tilde Y^T - \tilde B\right)x\right\|_2 =0\right)\le e^{-N}.\]
Since there are $|\mathcal D|\le CN$ terms in the sum, we conclude that
\[p\le e^{-c_0 N}+CNe^{-N}\le e^{-c'N}.\]
This concludes the proof.
\end{proof}


\bibliographystyle{abbrv}
\bibliography{band_bib}

\begin{thebibliography}{10}

\bibitem{Bai1997}
Z.~D. Bai.
\newblock Circular law.
\newblock {\em Ann. Probab.}, 25(1):494--529, 1997.

\bibitem{BaiYin_law}
Z.~D. Bai and Y.~Q. Yin.
\newblock Limit of the smallest eigenvalue of a large dimensional sample
  covariance matrix.
\newblock {\em Ann. Probab.}, 21(3):1275--1294, 1993.

\bibitem{Ball}
K.~Ball.
\newblock {\em Volumes of sections of cubes and related problems}, pages
  251--260.
\newblock Springer Berlin Heidelberg, Berlin, Heidelberg, 1989.

\bibitem{isotropic}
A.~Bloemendal, L.~Erd{\H o}s, A.~Knowles, H.-T. Yau, and J.~Yin.
\newblock Isotropic local laws for sample covariance and generalized {W}igner
  matrices.
\newblock {\em Electron. J. Probab.}, 19(33):1--53, 2014.

\bibitem{Handbook_DS}
K.~R. Davidson and S.~J. Szarek.
\newblock Local operator theory, random matrices and banach spaces.
\newblock volume~1 of {\em Handbook of the Geometry of Banach Spaces}, pages
  317 -- 366. North-Holland, Amsterdam, 2001.

\bibitem{Geman_large}
S.~Geman.
\newblock A limit theorem for the norm of random matrices.
\newblock {\em Ann. Probab.}, 8(2):252--261, 1980.

\bibitem{Ginibre}
J.~Ginibre.
\newblock Statistical ensembles of complex, quaternion, and real matrices.
\newblock {\em J. Math. Phys.}, 6(3):440--449, 1965.

\bibitem{gotze2010}
F.~G{\H o}tze and A.~Tikhomirov.
\newblock The circular law for random matrices.
\newblock {\em Ann. Probab.}, 38(4):1444--1491, 2010.

\bibitem{Anisotropic}
A.~Knowles and J.~Yin.
\newblock Anisotropic local laws for random matrices.
\newblock {\em Probability Theory and Related Fields}, pages 1--96, 2016.

\bibitem{Random_polytopes}
A.~Litvak, A.~Pajor, M.~Rudelson, and N.~Tomczak-Jaegermann.
\newblock Smallest singular value of random matrices and geometry of random
  polytopes.
\newblock {\em Adv. Math.}, 195(2):491 -- 523, 2005.

\bibitem{Rud_polytope}
A.~Litvak, A.~Pajor, M.~Rudelson, and N.~Tomczak-Jaegermann.
\newblock Smallest singular value of random matrices and geometry of random
  polytopes.
\newblock {\em Advances in Mathematics}, 195(2):491 -- 523, 2005.

\bibitem{MP}
V.~A. Mar{\v c}enko and L.~A. Pastur.
\newblock Distribution of eigenvalues for some sets of random matrices.
\newblock {\em Mathematics of the USSR-Sbornik}, 1:457, 1967.

\bibitem{PanZhou_circular}
G.~Pan and W.~Zhou.
\newblock Circular law, extreme singular values and potential theory.
\newblock {\em J. Multivar. Anal.}, 101(3):645--656, 2010.

\bibitem{Rud_Annal}
M.~Rudelson.
\newblock Invertibility of random matrices: norm of the inverse.
\newblock {\em Ann. Math.}, 168(2):575--600, 2008.

\bibitem{RudVersh_square}
M.~Rudelson and R.~Vershynin.
\newblock The {L}ittlewood-{O}fford problem and invertibility of random
  matrices.
\newblock {\em Adv. Math.}, 218:600--633, 2008.

\bibitem{RudVersh_rect}
M.~Rudelson and R.~Vershynin.
\newblock The smallest singular value of a random rectangular matrix.
\newblock {\em Comm. Pure Appl. Math.}, 62:1707--1739, 2009.

\bibitem{HansonW}
M.~Rudelson and R.~Vershynin.
\newblock {H}anson-{W}right inequality and sub-gaussian concentration.
\newblock {\em Electron. Commun. Probab.}, 18:9 pp., 2013.

\bibitem{RudVersh_smallball}
M.~Rudelson and R.~Vershynin.
\newblock Small ball probabilities for linear images of high-dimensional
  distributions.
\newblock {\em Int. Math. Res. Notices}, 2015(19):9594--9617, 2015.

\bibitem{Silverstein_small}
J.~W. Silverstein.
\newblock The smallest eigenvalue of a large dimensional {W}ishart matrix.
\newblock {\em Ann. Probab.}, 13(4):1364--1368, 1985.

\bibitem{TaoVu_circular}
T.~Tao and V.~Vu.
\newblock Random matrices: the circular law.
\newblock {\em Commun. Contemp. Math.}, 10(2):261--307, 2008.

\bibitem{tao2010}
T.~Tao, V.~Vu, and M.~Krishnapur.
\newblock Random matrices: Universality of {ESD}s and the circular law.
\newblock {\em Ann. Probab.}, 38(5):2023--2065, 2010.

\bibitem{XYY}
H.~Xi, F.~Yang, and J.~Yin.
\newblock Local circular law for the product of a deterministic matrix with a
  random matrix.
\newblock {\em arXiv:1603.04066}.

\bibitem{SmallTX}
F.~Yang.
\newblock The smallest singular value of deformed random rectangular matrices.
\newblock {\em arXiv:1702.04050}.

\bibitem{BaiYin_large}
Y.~Q. Yin, Z.~D. Bai, and P.~R. Krishnaiah.
\newblock On the limit of the largest eigenvalue of the large dimensional
  sample covariance matrix.
\newblock {\em Probability Theory and Related Fields}, 78(4):509--521, 1988.

\end{thebibliography}

\end{document}